\numberwithin{equation}{section}
\theoremstyle{plain}
\newtheorem{theorem}{Theorem}[section]
\newtheorem{lemma}{Lemma}[section]
\newtheorem{corollary}{Corollary}[section]
\theoremstyle{definition}
\newtheorem{definition}{Definition}[section]
\theoremstyle{remark}
\newtheorem{remark}{Remark}[section]
\title{The initial value problem for motion of incompressible viscous and heat-conductive fluids in Banach spaces}
\author{Ry\^{o}hei Kakizawa\thanks{Graduate School of Mathematical Sciences, The University of Tokyo, 3-8-1 Komaba Meguro-ku Tokyo 153-8914, Japan (\textit{E-mail address:} kakizawa@ms.u-tokyo.ac.jp)}}
\date{}
\begin{document}

\maketitle

\begin{abstract}
We consider the abstract initial value problem for the system of evolution equations which describe motion of incompressible viscous and heat-conductive fluids in a bounded domain.
It is difficulty of our problem that we do not neglect the viscous dissipation function in contrast to the Boussinesq approximation.
This problem has uniquely a mild solution locally in time for general initial data, and globally in time for small initial data.
Moreover, a mild solution of this problem can be a strong or classical solution under appropriate assumptions for initial data.
We prove the above properties by the theory of analytic semigroups on Banach spaces.
\end{abstract}


\section{Introduction}
Let $\Omega$ be a bounded domain in $\mathbb{R}^{n}$ $(n \in \mathbb{Z}, \ n\geq 2)$ with its $C^{2,1}$-boundary $\partial\Omega$.
Motion of incompressible viscous and heat-conductive fluids in $\Omega$ is described by the system of $n+2$ equations as follows:
\begin{equation}
\begin{cases}
\mathrm{div}u=0 & \mathrm{in} \ \Omega\times(0,T), \\
\rho\{\partial_{t}+(u\cdot\nabla)\}u=\rho f(\theta)-\nabla p+\mu\Delta u & \mathrm{in} \ \Omega\times(0,T), \\
\rho c_{v}\{\partial_{t}+(u\cdot\nabla)\}\theta=\Phi(u)+\kappa\Delta\theta & \mathrm{in} \ \Omega\times(0,T),
\end{cases}
\end{equation}
where $u=(u_{1}, \cdots, u_{n})$ is the fluid velocity, $p$ is the pressure, $\theta$ is the absolute temperature, $\rho$ is the density, $\mu$ is the coefficient of viscosity, $\kappa$ is the coefficient of heat conductivity, $c_{v}$ is the specific heat at constant volume, $f=(f_{1}, \cdots, f_{n})$ is the external force field affected by $\theta$, $\Phi(u)$ is the viscous dissipation function defined as
\begin{equation*}
\Phi(u)=\Phi(u,u), \ \Phi(u,v)=2\mu D(u):D(v), \ D(u)=\frac{1}{2}\left(\nabla u+(\nabla u)^{T}\right),
\end{equation*}
$(\nabla u)^{T}$ is the transposed matrix of $\nabla u$.
These equations correspond to the law of conservation of mass, momentum and energy respectively.
Moreover, it is required that $\rho$, $\mu$, $\kappa$ and $c_{v}$ are positive constants.
See, for example, \cite{Lamb}, \cite{Serrin} on conservation laws of fluid motion and the derivation of the above equations.

It is well known in \cite{Boussinesq} that the Boussinesq approximation is a simplified model of motion of incompressible viscous and heat-conductive fluids.
There is no doubt that many investigations on the Boussinesq approximation have been carried out for one hundred years.
The Rayleigh-B\'{e}nard convection can be considered as a typical phenomenon valid for the Boussinesq approximation in the case where the Rayleigh number $Ra$ is slightly larger than the critical Rayleigh number $Ra_{c}$.
It is an important physical property that formation of the Rayleigh-B\'{e}nard convection is characterized as the B\'{e}nard cellular pattern.
On the other hand, the collapse of the B\'{e}nard cellular pattern will be caused by the relative increase in $Ra$ to $Ra_{c}$.
In the case where $Ra$ is sufficiently larger than $Ra_{c}$, the Boussinesq approximation does not seem appropriate due to its neglect of the viscous dissipation function.
It is quite natural to consider (1.1) from the hydrodynamical point of view.
Some problems related to (1.1) have been studied in recent years.
Kagei and Skowron \cite{Kagei 1} discussed the existence and uniqueness of solutions of the initial-boundary value problem for motion of micropolar fluids with heat conduction in $\mathbb{R}^{3}$.
Moreover, Kagei \cite{Kagei 2} considered global attractors for the initial-boundary value problem for (1.1) in $\mathbb{R}^{2}$.
{\L}ukaszewicz and Krzy\.{z}anowski \cite{Lukaszewicz} treated the initial-boundary value problem for (1.1) in $\mathbb{R}^{3}$ with moving boundaries.
However, initial data $(u_{0},\theta_{0})$ in $L^{p}_{\sigma}(\Omega)\times L^{q}(\Omega)$ $(1<p<\infty, \ 1<q<\infty)$ except for $p=q=2$ and classical solutions of the initial-boundary value problem for (1.1) in anisotropic H\"{o}lder spaces are not considered in their results, where $L^{p}_{\sigma}(\Omega)$ is the closed subspace of $(L^{p}(\Omega))^{n}$ defined as in section 2.
It is necessary to discuss the existence, uniqueness and regularity of solutions of the initial-boundary value problem for (1.1) with initial data $(u_{0},\theta_{0})$ in $L^{p}_{\sigma}(\Omega)\times L^{q}(\Omega)$.

In order to meet the above requirement, we study the initial-boundary value problem for (1.1) with the following initial-boundary data:
\begin{equation}
\begin{cases}
u|_{t=0}=u_{0} & \mathrm{in} \ \Omega, \\
u|_{\partial\Omega}=0 & \mathrm{on} \ \partial\Omega\times(0,T), \\
\theta|_{t=0}=\theta_{0} & \mathrm{in} \ \Omega, \\
\theta|_{\partial\Omega}=\theta_{s} & \mathrm{on} \ \partial\Omega\times(0,T),
\end{cases}
\end{equation}
where $\theta_{s}$ is the surface temperature on $\partial\Omega$ assumed to be a nonnegative constant.
When we treat initial data $(u_{0},\theta_{0})$ in $L^{p}_{\sigma}(\Omega)\times L^{q}(\Omega)$, it is useful to transform (1.1), (1.2) into the abstract initial value problem for the system of evolution equations in the same Banach space as above.
It is explained in section 2 that (1.1), (1.2) are rewritten as follows:
\begin{equation}\tag{I}
\begin{cases}
d_{t}u+A_{p}u=F(u,\theta) & \mathrm{in} \ (0,T), \\
d_{t}\theta+B_{q}\theta=G(u,\theta) & \mathrm{in} \ (0,T), \\
u(0)=u_{0}, \\
\theta(0)=\theta_{0},
\end{cases}
\end{equation}
where $A_{p}$ and $B_{q}$ are sectorial operators in $L^{p}_{\sigma}(\Omega)$ and $L^{q}(\Omega)$ respectively, $F(u,\theta)$ and $G(u,\theta)$ are nonlinear terms corresponding to $(1.1)_{2}$ and $(1.1)_{3}$ respectively.
It is well known in \cite[Chapter 3]{Henry}, \cite[Chapter 6]{Pazy} that we can consider not only strong solutions but also mild solutions of (1.1), (1.2).

We are concerned with the existence, uniqueness and regularity of mild solutions of (1.1), (1.2) in this paper.
This problem has uniquely a mild solution locally in time for general initial data, and globally in time for small initial data.
Moreover, a mild solution of this problem can be a strong or classical solution under appropriate assumptions for initial data.
We prove the above properties by the argument based on \cite{Fujita}, \cite{Giga 3}, \cite{Hishida}.
First, the existence of local mild solutions is obtained from the successive approximation method.
Second, global a priori estimates for mild solutions of (1.1), (1.2) give the existence of global mild solutions, and make the asymptotic behavior of global mild solutions clear.

This paper is organized as follows: In section 2, we define basic notation used in this paper and a strong and mild solution of (1.1), (1.2), and state our main results and some lemmas for them.
We prove the existence and uniqueness of mild solutions of (1.1), (1.2) in section 3.
The regularity of mild solutions of (1.1), (1.2) is discussed in sections 4 and 5.

\section{Preliminaries and main results}
\subsection{Function spaces}
Function spaces and basic notation which we use throughout this paper are introduced as follows: The norm in $L^{r}(\Omega)$ $(1\leq r\leq\infty)$ and the norm in $W^{k,r}(\Omega)$ (the Sobolev space, $k \in \mathbb{Z}$, $k\geq 0$) are denoted by $\|\cdot\|_{r}$ and $\|\cdot\|_{k,r}$ respectively, $W^{0,r}(\Omega)=L^{r}(\Omega)$, $\|\cdot\|_{0,r}=\|\cdot\|_{r}$.
$C^{\infty}_{0}(\Omega)$ is the set of all functions which are infinitely differentiable and have compact support in $\Omega$.
$W^{k,r}_{0}(\Omega)$ is the completion of $C^{\infty}_{0}(\Omega)$ in $W^{k,r}(\Omega)$.
Let us introduce solenoidal function spaces.
$C^{\infty}_{0,\sigma}(\Omega):=\{u \in (C^{\infty}_{0}(\Omega))^{n} \ ; \ \mathrm{div}u=0\}$.
$L^{p}_{\sigma}(\Omega)$ $(1<p<\infty)$ is the completion of $C^{\infty}_{0,\sigma}(\Omega)$ in $(L^{p}(\Omega))^{n}$.
It follows from \cite[Theorem 2]{Fujiwara} that $(L^{p}(\Omega))^{n}$ is decomposed into $(L^{p}(\Omega))^{n}=L^{p}_{\sigma}(\Omega)\oplus L^{p}_{\pi}(\Omega)$, where $L^{p}_{\pi}(\Omega)=\{\nabla p \ ; \ p \in W^{1,p}(\Omega)\}$.
Let $P_{p}$ be the projection of $(L^{p}(\Omega))^{n}$ onto $L^{p}_{\sigma}(\Omega)$.
$C^{k,\gamma}(\overline{\Omega})$ $(0<\gamma\leq 1)$ is the H\"{o}lder space defined as in \cite[1.26--1.29]{Adams}, $C^{k,0}(\overline{\Omega})=C^{k}(\overline{\Omega})$, $C^{0}(\overline{\Omega})=C(\overline{\Omega})$.

Let $I$ be an interval in $\mathbb{R}$, $X$ be a Banach space.
$C(I;X)$ is the set of all $X$-valued functions which are continuous in $I$.
$C_{b}(I;X)$ is the set of all $X$-valued functions which are bounded continuous in $I$.
$C^{k}(I;X)$ $(k \in \mathbb{Z}, \ k\geq 0)$ is the set of all $X$-valued function which are continuously differentiable up to the order $k$ in $I$, $C^{0}(I;X)=C(I;X)$.
In the case where $I$ is a bounded closed interval in $\mathbb{R}$, $C^{0,\gamma}(I;X)$ $(0<\gamma\leq 1)$ is the set of all $X$-valued function which are uniformly H\"{o}lder continuous with the exponent $\gamma$ on $I$.
If $I$ is not bounded or closed, $u \in C^{0,\gamma}(I;X)$ means that $u \in C^{0,\gamma}(I_{1};X)$ for any bounded closed interval $I_{1}$ contained in $I$.
$C^{k,\gamma}(I;X)$ is the set of all $X$-valued functions $u$ which $u \in C^{k}(I;X)$ and $d^{k}_{t}u \in C^{0,\gamma}(I;X)$, $C^{k,0}(I;X)=C^{k}(I;X)$.

$C_{b}(\mathbb{R};\mathbb{R}^{n})$ is the set of all $\mathbb{R}^{n}$-valued functions which are bounded continuous in $\mathbb{R}$.
$C^{k}(\mathbb{R};\mathbb{R}^{n})$ $(k \in \mathbb{Z}, \ k\geq 0)$ is the set of all $\mathbb{R}^{n}$-valued functions which are continuously differentiable up to the order $k$ in $\mathbb{R}$, $C^{0}(\mathbb{R};\mathbb{R}^{n})=C(\mathbb{R};\mathbb{R}^{n})$.
$C^{0,1}(\mathbb{R};\mathbb{R}^{n})$ is the set of all $\mathbb{R}^{n}$-valued functions which are uniformly Lipschitz continuous in $\mathbb{R}$.

\subsection{Stokes and Laplace operator}
For the sake of simplicity, we assume that $\rho=1$, $\mu=1$, $\kappa=1$, $c_{v}=1$ and $\theta_{s}=0$ throughout this paper.
Let us introduce two linear operators $A_{p}$ $(1<p<\infty)$ and $B_{q}$ $(1<q<\infty)$ which appeared in (I).
$B_{q}$ is the Laplace operator in $L^{q}(\Omega)$ with the zero Dirichlet boundary condition defined as $B_{q}=-\Delta$, $D(B_{q})=W^{2,q}(\Omega)\cap W^{1,q}_{0}(\Omega)$, where $D(B_{q})$ is the domain of $B_{q}$.
We introduce the Stokes operator $A_{p}$ in $L^{p}_{\sigma}(\Omega)$ by $A_{p}=-P_{p}\Delta$, $D(A_{p})=(D(B_{p}))^{n}\cap L^{p}_{\sigma}(\Omega)$.
It is well known in \cite[Theorems 2.5.2 and 7.3.6]{Pazy}, \cite[Theorem 1]{Giga 1} that $B_{q}$ and $A_{p}$ are sectorial operators in $L^{q}(\Omega)$ and $L^{p}_{\sigma}(\Omega)$ respectively.
Therefore, $-B_{q}$ generates an uniformly bounded analytic semigroup $\{e^{-tB_{q}}\}_{t\geq 0}$ on $L^{q}(\Omega)$, fractional powers $B^{\beta}_{q}$ of $B_{q}$ can be defined for any $\beta\geq 0$, $B^{0}_{q}=I_{q}$, where $I_{q}$ is the identity operator in $L^{q}(\Omega)$.
Similarly to $B_{q}$, an uniformly bounded analytic semigroup $\{e^{-tA_{p}}\}_{t\geq 0}$ on $L^{p}_{\sigma}(\Omega)$ is generated, fractional powers $A^{\alpha}_{p}$ of $A_{p}$ are defined for any $\alpha\geq 0$.
Moreover, it follows from \cite[Theorem 3]{Giga 2} that $D(A^{\alpha}_{p})$ is characterized as $D(A^{\alpha}_{p})=(D(B^{\alpha}_{p}))^{n}\cap L^{p}_{\sigma}(\Omega)$ for any $0\leq \alpha\leq 1$.
Let us introduce Banach spaces derived from $A^{\alpha}_{p}$ and $B^{\beta}_{q}$.
$X^{\alpha}_{p}$ ($Y^{\beta}_{q}$) is defined as $D(A^{\alpha}_{p})$ ($D(B^{\beta}_{q})$) with the norm $\|\cdot\|_{X^{\alpha}_{p}}=\|A^{\alpha}_{p}\cdot\|_{p}$ ($\|\cdot\|_{Y^{\beta}_{q}}=\|B^{\beta}_{q}\cdot\|_{q}$).
$\Lambda_{1}$ is the first eigenvalue of the Laplace operator with the zero Dirichlet boundary condition.

We state some lemmas concerning sectorial operators in Banach spaces.
See, for example, \cite[Chapter 1]{Henry}, \cite[Chapter 2]{Pazy} on the theory of analytic semigroups on Banach spaces and fractional powers of sectorial operators.
\begin{lemma}[8, Theorem 1.4.3]
Let $1<p<\infty$, $1<q<\infty$, $\alpha\geq 0$, $\beta\geq 0$, $0<\lambda<\Lambda_{1}$.
Then
\begin{equation}
\|A^{\alpha}_{p}e^{-tA_{p}}u\|_{p}\leq C_{A_{p},\alpha,\lambda}t^{-\alpha}e^{-\lambda t}\|u\|_{p},
\end{equation}
\begin{equation}
\|B^{\beta}_{q}e^{-tB_{q}}\theta\|_{q}\leq C_{B_{q},\beta,\lambda}t^{-\beta}e^{-\lambda t}\|\theta\|_{q}
\end{equation}
for any $u \in L^{p}_{\sigma}(\Omega)$, $\theta \in L^{q}(\Omega)$, where $C_{A_{p}, \alpha,\lambda}$ and $C_{B_{q}, \beta,\lambda}$ are positive constants.
\end{lemma}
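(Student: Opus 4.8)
The plan is to deduce both estimates from the single abstract fact about sectorial operators recorded as \cite[Theorem 1.4.3]{Henry}: if $A$ is a sectorial operator on a Banach space and $a>0$ lies strictly below the real part of its spectrum, i.e. $\mathrm{Re}\,\sigma(A)>a$, then for every exponent $\alpha\geq 0$ there is a constant $C_{\alpha,a}$ such that
\begin{equation*}
\|A^{\alpha}e^{-tA}\|\leq C_{\alpha,a}\,t^{-\alpha}e^{-at} \quad (t>0).
\end{equation*}
Since $A_{p}$ and $B_{q}$ have already been shown to be sectorial via \cite[Theorems 2.5.2 and 7.3.6]{Pazy} and \cite[Theorem 1]{Giga 1}, the only ingredient left to supply is a lower bound for the real part of their spectra. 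This bound is precisely what fixes the admissible exponential rates, and the restriction $0<\lambda<\Lambda_{1}$ in the statement signals that the spectral bottom of both operators is governed by $\Lambda_{1}$.

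First I would treat $B_{q}=-\Delta$ with the zero Dirichlet boundary condition. On $L^{2}(\Omega)$ this operator is self-adjoint and positive, its spectrum is discrete, and its smallest eigenvalue is exactly $\Lambda_{1}>0$; hence $\mathrm{Re}\,\sigma(B_{2})\geq\Lambda_{1}$. For general $1<q<\infty$ the $L^{q}$-spectrum of $B_{q}$ coincides with the $L^{2}$-spectrum, so $\mathrm{Re}\,\sigma(B_{q})\geq\Lambda_{1}$ as well. Fixing any $\lambda$ with $0<\lambda<\Lambda_{1}$ gives $\mathrm{Re}\,\sigma(B_{q})>\lambda$, and the abstract estimate applied with $a=\lambda$ yields (2.2), with $C_{B_{q},\beta,\lambda}$ the resulting constant.

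Next I would handle the Stokes operator $A_{p}=-P_{p}\Delta$ in the same manner, the point being to establish $\mathrm{Re}\,\sigma(A_{p})\geq\Lambda_{1}$. In $L^{2}_{\sigma}(\Omega)$ the Stokes operator is self-adjoint and positive, and by its variational characterization the smallest eigenvalue is the minimum of the Rayleigh quotient $\|\nabla u\|_{2}^{2}/\|u\|_{2}^{2}$ over divergence-free $u\in(W^{1,2}_{0}(\Omega))^{n}$. Applying the Poincaré inequality componentwise gives $\|\nabla u\|_{2}^{2}\geq\Lambda_{1}\|u\|_{2}^{2}$ for every $u\in(W^{1,2}_{0}(\Omega))^{n}$, so the infimum over the divergence-free subspace is at least $\Lambda_{1}$. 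Invoking once more the $p$-independence of the spectrum, we obtain $\mathrm{Re}\,\sigma(A_{p})\geq\Lambda_{1}$, and the abstract estimate with $a=\lambda$ delivers (2.1).

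The main obstacle is not the semigroup machinery, which is purely abstract once sectoriality is in hand, but rather securing the \emph{uniform} exponential rate $\lambda<\Lambda_{1}$ in every $L^{p}$ and $L^{q}$: one must know that the bottom of the spectrum does not deteriorate as $p$ or $q$ moves away from $2$. This rests on the $L^{p}$-spectral independence for the Dirichlet Laplacian and the Stokes operator, together with the positivity of $\Lambda_{1}$ supplied by Poincaré's inequality on the bounded domain $\Omega$. The reason the constants $C_{A_{p},\alpha,\lambda}$ and $C_{B_{q},\beta,\lambda}$ must be allowed to depend on $\lambda$ is that they blow up as $\lambda\uparrow\Lambda_{1}$, when the gap between the rate and the spectral bottom closes.
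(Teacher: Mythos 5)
Your proposal is correct and matches the paper's treatment: the paper gives no proof of its own but simply invokes \cite[Theorem 1.4.3]{Henry}, which is exactly the abstract estimate you start from, and your verification of its hypotheses (sectoriality from \cite{Pazy} and \cite{Giga 1}, the spectral lower bound $\mathrm{Re}\,\sigma\geq\Lambda_{1}$ via the Rayleigh quotient and Poincar\'{e}'s inequality, and $L^{p}$-independence of the discrete spectrum on the bounded domain $\Omega$) supplies precisely the details the citation leaves implicit. No gaps.
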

\begin{lemma}[8, Theorem 1.4.3]
Let $1<p<\infty$, $1<q<\infty$, $0<\alpha\leq 1$, $0<\beta\leq 1$.
Then
\begin{equation}
\|(e^{-tA_{p}}-I_{p})u\|_{p}\leq C_{A_{p},\alpha}t^{\alpha}\|u\|_{X^{\alpha}_{p}},
\end{equation}
\begin{equation}
\|(e^{-tB_{q}}-I_{q})\theta\|_{q}\leq C_{B_{q},\beta}t^{\beta}\|\theta\|_{Y^{\beta}_{q}}
\end{equation}
for any $u \in X^{\alpha}_{p}$, $\theta \in Y^{\beta}_{q}$, where $C_{A_{p}, \alpha}$ and $C_{B_{q}, \beta}$ are positive constants.
\end{lemma}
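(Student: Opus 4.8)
The plan is to prove (2.3); estimate (2.4) then follows verbatim with the triple $(A_p, X^{\alpha}_p, L^p_{\sigma}(\Omega))$ replaced by $(B_q, Y^{\beta}_q, L^q(\Omega))$, so I would treat only the first. Since $\Lambda_1>0$, the spectrum of $A_p$ is bounded away from $0$, so $A_p$ is invertible and $A^{-\alpha}_p$ is a bounded operator defined on all of $L^p_{\sigma}(\Omega)$. Hence every $u\in X^{\alpha}_p=D(A^{\alpha}_p)$ can be written as $u=A^{-\alpha}_p g$ with $g=A^{\alpha}_p u$ and $\|g\|_p=\|u\|_{X^{\alpha}_p}$, and I would reduce everything to an estimate in terms of $g$.

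First I would dispose of the endpoint $\alpha=1$. For $u\in D(A_p)$ the analyticity of the semigroup and the fundamental theorem of calculus give
\[
(e^{-tA_p}-I_p)u=-\int_0^t e^{-sA_p}A_p u\,ds ,
\]
so that, with $M:=\sup_{s\geq 0}\|e^{-sA_p}\|$ finite by the uniform boundedness of the semigroup, $\|(e^{-tA_p}-I_p)u\|_p\leq Mt\,\|A_p u\|_p=Mt\,\|u\|_{X^{1}_p}$, which is (2.3) for $\alpha=1$. For $0<\alpha<1$ I would use the integral representation of the negative fractional power, $A^{-\alpha}_p=\Gamma(\alpha)^{-1}\int_0^{\infty}\tau^{\alpha-1}e^{-\tau A_p}\,d\tau$, which converges in operator norm thanks to the exponential decay in (2.1). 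Applying the bounded operator $e^{-tA_p}-I_p$ under this Bochner integral and representing $(e^{-tA_p}-I_p)e^{-\tau A_p}g=-\int_{\tau}^{t+\tau}A_p e^{-sA_p}g\,ds$ by the fundamental theorem of calculus, I obtain
\[
(e^{-tA_p}-I_p)u=-\frac{1}{\Gamma(\alpha)}\int_0^{\infty}\tau^{\alpha-1}\int_{\tau}^{t+\tau}A_p e^{-sA_p}g\,ds\,d\tau .
\]

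The estimate is then extracted by taking norms, inserting the bound $\|A_p e^{-sA_p}g\|_p\leq C_{A_p,1,\lambda}\,s^{-1}\|g\|_p$ from (2.1), and evaluating the inner integral as $\int_{\tau}^{t+\tau}s^{-1}\,ds=\log(1+t/\tau)$. The substitution $\tau=t\sigma$ factors out exactly one power $t^{\alpha}$:
\[
\|(e^{-tA_p}-I_p)u\|_p\leq \frac{C_{A_p,1,\lambda}}{\Gamma(\alpha)}\,\|g\|_p\,t^{\alpha}\int_0^{\infty}\sigma^{\alpha-1}\log\!\Bigl(1+\tfrac{1}{\sigma}\Bigr)\,d\sigma .
\]
The remaining scalar integral is finite for $0<\alpha<1$: at $\sigma=0$ the logarithmic singularity is integrable against $\sigma^{\alpha-1}$, and at infinity the integrand decays like $\sigma^{\alpha-2}$. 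It therefore contributes a finite constant, and recalling $\|g\|_p=\|u\|_{X^{\alpha}_p}$ yields (2.3) with $C_{A_p,\alpha}$ equal to the resulting product.

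The main obstacle I anticipate is technical rather than conceptual: justifying the manipulations involving the unbounded operator $A_p$ beneath the integral signs. Concretely, one must verify that $e^{-\tau A_p}g\in D(A_p)$ for $\tau>0$ (immediate from analyticity), that interchanging $e^{-tA_p}-I_p$ with the Bochner integral defining $A^{-\alpha}_p$ is legitimate, and that the passage to the iterated integral of norms is valid by Tonelli's theorem applied to the nonnegative integrand. Each of these is standard for sectorial operators, and the only quantitative input required is the decay estimate of Lemma 2.1 with exponent $1$; no estimate beyond those already available is needed.
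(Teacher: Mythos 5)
Your argument is correct, but it is not the one the paper relies on: the paper simply cites Henry [8, Theorem~1.4.3], and the standard proof there (and the one most naturally suggested by the paper's own Lemma~2.1) is a one-line computation: for $u\in X^{\alpha}_{p}$ write
\begin{equation*}
(e^{-tA_{p}}-I_{p})u=-\int_{0}^{t}A_{p}e^{-sA_{p}}u\,ds=-\int_{0}^{t}A^{1-\alpha}_{p}e^{-sA_{p}}A^{\alpha}_{p}u\,ds,
\end{equation*}
and then apply (2.1) with exponent $1-\alpha$ to get $\|(e^{-tA_{p}}-I_{p})u\|_{p}\leq C\int_{0}^{t}s^{\alpha-1}ds\,\|u\|_{X^{\alpha}_{p}}=(C/\alpha)\,t^{\alpha}\|u\|_{X^{\alpha}_{p}}$, which also handles $\alpha=1$ without a separate case. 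You instead insist on using (2.1) only with the integer exponent $1$, and compensate by unfolding $A^{-\alpha}_{p}$ through its Bochner-integral representation, arriving at the double integral whose scalar part $\int_{0}^{\infty}\sigma^{\alpha-1}\log(1+1/\sigma)\,d\sigma$ you correctly check to be finite precisely for $0<\alpha<1$ (with the endpoint $\alpha=1$ treated separately). Everything you do is legitimate — the interchange of the bounded operator $e^{-tA_{p}}-I_{p}$ with the Bochner integral, the fundamental-theorem identity, and the Tonelli step are all standard for sectorial operators — so the proof stands; what the standard route buys is brevity and a uniform treatment of $0<\alpha\leq 1$, at the mild price of invoking the fractional-power smoothing estimate $\|A^{1-\alpha}_{p}e^{-sA_{p}}\|\leq Cs^{\alpha-1}$, which the paper has already recorded as (2.1) and which your argument anyway presupposes implicitly through the theory of $A^{-\alpha}_{p}$.
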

\begin{lemma}[8, Exercise 1.4.10]
Let $1<p<\infty$, $1<q<\infty$, $0<\alpha\leq 1$, $0<\beta\leq 1$.
Then
\begin{equation}
\|e^{-tA_{p}}u\|_{X^{\alpha}_{p}}=o(t^{-\alpha}) \ \mathrm{as} \ t\rightarrow +0,
\end{equation}
\begin{equation}
\|e^{-tB_{q}}\theta\|_{Y^{\beta}_{q}}=o(t^{-\beta}) \ \mathrm{as} \ t\rightarrow +0
\end{equation}
for any $u \in L^{p}_{\sigma}(\Omega)$, $\theta \in L^{q}(\Omega)$.
\end{lemma}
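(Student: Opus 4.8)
The plan is to upgrade the $O(t^{-\alpha})$ bound already furnished by Lemma 2.1 to the sharper $o(t^{-\alpha})$ decay by a standard density argument; the same reasoning applies verbatim to the $B_q$-statement, so I will only describe the $A_p$-case. Unwinding the definition of the $X^\alpha_p$-norm, the goal is to show that $t^\alpha\|A^\alpha_p e^{-tA_p}u\|_p \to 0$ as $t\to +0$ for every $u \in L^p_\sigma(\Omega)$.

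First I would record two properties of the sectorial operator $A_p$. Since $A_p$ is sectorial on $L^p_\sigma(\Omega)$, its fractional-power domain $X^\alpha_p = D(A^\alpha_p)$ is dense in $L^p_\sigma(\Omega)$, and on $D(A^\alpha_p)$ the closed operator $A^\alpha_p$ commutes with the semigroup, so that $A^\alpha_p e^{-tA_p}v = e^{-tA_p}A^\alpha_p v$. Moreover, taking $\alpha=0$ in Lemma 2.1 (or invoking the uniform boundedness of the analytic semigroup noted in the text) gives a constant $M$ with $\|e^{-tA_p}w\|_p \leq M\|w\|_p$ for all $t\geq 0$ and all $w \in L^p_\sigma(\Omega)$.

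Given $u \in L^p_\sigma(\Omega)$ and $\varepsilon>0$, I would use density to choose $v \in X^\alpha_p$ with $\|u-v\|_p<\varepsilon$, and split
\begin{equation*}
t^\alpha\|A^\alpha_p e^{-tA_p}u\|_p \leq t^\alpha\|A^\alpha_p e^{-tA_p}(u-v)\|_p + t^\alpha\|A^\alpha_p e^{-tA_p}v\|_p.
\end{equation*}
The first term is controlled by Lemma 2.1: it does not exceed $C_{A_p,\alpha,\lambda}e^{-\lambda t}\|u-v\|_p \leq C_{A_p,\alpha,\lambda}\varepsilon$, a bound uniform in $t$. For the second term I commute $A^\alpha_p$ through the semigroup and use uniform boundedness, obtaining $t^\alpha\|e^{-tA_p}A^\alpha_p v\|_p \leq M t^\alpha\|A^\alpha_p v\|_p$, which tends to $0$ as $t\to +0$ since $\|A^\alpha_p v\|_p$ is a fixed finite quantity and $\alpha>0$. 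Hence there is $\delta>0$ such that $t^\alpha\|A^\alpha_p e^{-tA_p}u\|_p \leq (C_{A_p,\alpha,\lambda}+1)\varepsilon$ whenever $0<t<\delta$, and as $\varepsilon$ is arbitrary the claim follows.

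The argument is essentially soft, so I expect no serious analytic obstacle; the only point that genuinely matters is that Lemma 2.1 alone yields merely the $O(t^{-\alpha})$ estimate, and the passage to little-$o$ requires splitting against the dense set $X^\alpha_p$, on which $A^\alpha_p v$ is already defined and the extra factor $t^\alpha$ forces the decay. The single ingredient I borrow beyond the stated lemmas is the density of $D(A^\alpha_p)$ in $L^p_\sigma(\Omega)$, a standard property of fractional powers of sectorial operators recorded in \cite[Chapter 1]{Henry}.
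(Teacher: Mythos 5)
Your argument is correct and is precisely the standard density argument intended here: the paper offers no proof of its own but simply cites \cite[Exercise 1.4.10]{Henry}, whose expected solution is exactly your splitting of $u$ against the dense subspace $D(A^{\alpha}_{p})$, using the uniform $O(t^{-\alpha})$ bound of Lemma 2.1 on the remainder and the commutation $A^{\alpha}_{p}e^{-tA_{p}}v=e^{-tA_{p}}A^{\alpha}_{p}v$ together with the factor $t^{\alpha}$ on the approximant. Nothing further is needed.
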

\begin{lemma}[8, Theorem 1.6.1]
Let $1<p<\infty$, $1<q<\infty$, $0\leq\alpha\leq 1$, $0\leq\beta\leq 1$.
Then
\begin{equation}
X^{\alpha}_{p}\hookrightarrow (W^{k,r}(\Omega))^{n} \ \mathrm{if} \ \frac{1}{p}-\frac{2\alpha-k}{n}\leq\frac{1}{r}\leq\frac{1}{p},
\end{equation}
\begin{equation}
Y^{\beta}_{q}\hookrightarrow W^{k,r}(\Omega) \ \mathrm{if} \ \frac{1}{q}-\frac{2\beta-k}{n}\leq\frac{1}{r}\leq\frac{1}{q},
\end{equation}
where $\hookrightarrow$ is the continuous inclusion.
\end{lemma}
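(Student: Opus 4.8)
The plan is to reduce the vector-valued statement to the scalar one and then prove the embedding $Y^\beta_q\hookrightarrow W^{k,r}(\Omega)$ by pairing a Gagliardo--Nirenberg interpolation inequality with the integral representation of the negative fractional power of $B_q$. The Stokes statement follows from the Laplace statement: by the characterization $D(A^\alpha_p)=(D(B^\alpha_p))^n\cap L^p_\sigma(\Omega)$ of \cite[Theorem 3]{Giga 2}, together with the corresponding equivalence of norms, each component of $u\in X^\alpha_p$ lies in $Y^\alpha_p$ with $\sum_i\|u_i\|_{Y^\alpha_p}\leq C\|u\|_{X^\alpha_p}$; hence once the scalar embedding is established, $u\in(W^{k,r}(\Omega))^n$ with control by $\|u\|_{X^\alpha_p}$ is immediate. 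So I would only treat the embedding for $Y^\beta_q$.

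For the scalar case, set $\theta:=\tfrac12\bigl(k+n(\tfrac1q-\tfrac1r)\bigr)$. The hypothesis $\tfrac1q-\tfrac{2\beta-k}{n}\leq\tfrac1r\leq\tfrac1q$ is exactly $\tfrac{k}{2}\leq\theta\leq\beta$, which is precisely the range in which the Gagliardo--Nirenberg inequality $\|v\|_{k,r}\leq C\|v\|_{2,q}^{\theta}\|v\|_q^{1-\theta}$ holds for $v\in W^{2,q}(\Omega)$ (for $2\beta<k$ the stated condition is vacuous and nothing is to be proved). Since $D(B_q)=W^{2,q}(\Omega)\cap W^{1,q}_0(\Omega)$ and $0\notin\sigma(B_q)$ because $\Lambda_1>0$, the $L^q$ elliptic estimate gives $\|v\|_{2,q}\leq C\|B_q v\|_q$ on $D(B_q)$, so the inequality may be rewritten with $\|B_q v\|_q$ in place of $\|v\|_{2,q}$.

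In the subcritical range $\theta<\beta$ I would argue directly. Write $u=B^{-\beta}_q w$ with $w=B^\beta_q u\in L^q(\Omega)$ and use $B^{-\beta}_q=\tfrac1{\Gamma(\beta)}\int_0^\infty s^{\beta-1}e^{-sB_q}\,ds$. Applying the Gagliardo--Nirenberg inequality to $e^{-sB_q}w\in D(B_q)$ and estimating the two factors by Lemma 2.1 (with $\beta=1$ and $\beta=0$ respectively) yields $\|e^{-sB_q}w\|_{k,r}\leq Cs^{-\theta}e^{-\lambda s}\|w\|_q$, whence $\|u\|_{k,r}\leq\tfrac{C}{\Gamma(\beta)}\|w\|_q\int_0^\infty s^{\beta-\theta-1}e^{-\lambda s}\,ds$. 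The integral converges exactly because $\beta-\theta>0$, giving $\|u\|_{k,r}\leq C\|B^\beta_q u\|_q=C\|u\|_{Y^\beta_q}$.

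The main obstacle is the borderline case $\theta=\beta$, i.e. $\tfrac1r=\tfrac1q-\tfrac{2\beta-k}{n}$, where the exponent $s^{\beta-\theta-1}=s^{-1}$ makes the above integral diverge at $s=0$: the direct estimate breaks down precisely at the critical Sobolev exponent. Here I would instead use interpolation. Since $B_q$ admits bounded imaginary powers, $Y^\beta_q$ coincides with the complex interpolation space $[L^q(\Omega),Y^1_q]_\beta$, while the Gagliardo--Nirenberg inequality exhibits $W^{k,r}(\Omega)$ as a space into which $[L^q(\Omega),W^{2,q}(\Omega)]_\beta$ embeds; monotonicity of the interpolation functor together with $Y^1_q\hookrightarrow W^{2,q}(\Omega)$ then gives $Y^\beta_q\hookrightarrow W^{k,r}(\Omega)$ at the endpoint as well. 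This argument is the concrete specialization to $B_q$ and $A_p$ of the abstract embedding theorem \cite[Theorem 1.6.1]{Henry}.
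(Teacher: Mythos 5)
The paper offers no proof of this lemma at all: it is quoted verbatim from Henry \cite[Theorem 1.6.1]{Henry}, so there is no internal argument to compare yours against. Your reconstruction is essentially sound and, in the subcritical range, reproduces the standard mechanism behind Henry's theorem: the reduction of the Stokes case to the scalar case via $D(A^{\alpha}_{p})=(D(B^{\alpha}_{p}))^{n}\cap L^{p}_{\sigma}(\Omega)$ with equivalent norms is exactly what \cite[Theorem 3]{Giga 2} provides, the identification of the Gagliardo--Nirenberg exponent $\theta=\frac{1}{2}\bigl(k+n(\frac{1}{q}-\frac{1}{r})\bigr)$ with the hypothesis $\frac{k}{2}\leq\theta\leq\beta$ is correct, and the estimate $\|e^{-sB_{q}}w\|_{k,r}\leq Cs^{-\theta}e^{-\lambda s}\|w\|_{q}$ fed into the representation $B^{-\beta}_{q}=\frac{1}{\Gamma(\beta)}\int^{\infty}_{0}s^{\beta-1}e^{-sB_{q}}ds$ gives the embedding whenever $\theta<\beta$. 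You are also right that the borderline case $\theta=\beta$ is genuinely outside this argument; indeed Henry's Theorem 1.6.1 is usually stated with a strict inequality, whereas the paper's Lemma 2.4 (and its use in Lemmas 2.7 and 2.8, where equality in the exponent conditions is permitted) requires the critical case, so your decision to treat it separately is not pedantry.

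The one step I would not accept as written is the justification of the endpoint. A multiplicative inequality of the form $\|v\|_{k,r}\leq C\|v\|^{\beta}_{2,q}\|v\|^{1-\beta}_{q}$ is equivalent to the embedding of the \emph{real} interpolation space $(L^{q}(\Omega),W^{2,q}(\Omega))_{\beta,1}$ into $W^{k,r}(\Omega)$, and the complex space $[L^{q}(\Omega),W^{2,q}(\Omega)]_{\beta}$ only embeds into $(L^{q}(\Omega),W^{2,q}(\Omega))_{\beta,\infty}$, which is strictly larger; so ``Gagliardo--Nirenberg plus monotonicity of the interpolation functor'' does not by itself yield $[L^{q}(\Omega),W^{2,q}(\Omega)]_{\beta}\hookrightarrow W^{k,r}(\Omega)$ at the critical exponent. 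What you actually need there is the identification $[L^{q}(\Omega),W^{2,q}(\Omega)]_{\beta}=H^{2\beta,q}(\Omega)$ together with the critical Sobolev embedding for Bessel potential spaces, $H^{2\beta,q}(\Omega)\hookrightarrow W^{k,r}(\Omega)$ for $\frac{1}{r}=\frac{1}{q}-\frac{2\beta-k}{n}$, $r<\infty$; combined with the bounded imaginary powers of $B_{q}$ (Seeley) giving $Y^{\beta}_{q}=[L^{q}(\Omega),Y^{1}_{q}]_{\beta}$, this closes the endpoint case. With that substitution the proof is complete.
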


\subsection{Abstract initial value problem for (1.1), (1.2)}
Let $1<p<\infty$, $1<q<\infty$, $0\leq \alpha_{0}<1$, $0\leq \beta_{0}<1$, $u_{0} \in X^{\alpha_{0}}_{p}$, $\theta_{0} \in Y^{\beta_{0}}_{q}$.
Then we apply $P_{p}$ to $(1.1)_{2}$, and get the following abstract initial value problem:
\begin{equation}\tag{I}
\begin{cases}
d_{t}u+A_{p}u=F(u,\theta) & \mathrm{in} \ (0,T], \\
d_{t}\theta+B_{q}\theta=G(u,\theta) & \mathrm{in} \ (0,T], \\
u(0)=u_{0}, \\
\theta(0)=\theta_{0},
\end{cases}
\end{equation}
where
\begin{equation*}
F(u,\theta):=-P_{p}(u\cdot\nabla)u+P_{p}f(\theta),
\end{equation*}
\begin{equation*}
G(u,\theta):=-(u\cdot\nabla)\theta+\Phi(u).
\end{equation*}
In order to solve (I), first of all, we shall find a solution satisfying the following abstract integral equations related to (I):
\begin{equation}\tag{II}
\begin{cases}
u(t)=e^{-tA_{p}}u_{0}+\displaystyle\int^{t}_{0}e^{-(t-s)A_{p}}F(u,\theta)(s)ds, \\
\theta(t)=e^{-tB_{q}}\theta_{0}+\displaystyle\int^{t}_{0}e^{-(t-s)B_{q}}G(u,\theta)(s)ds
\end{cases}
\end{equation}
for any $0\leq t\leq T$.
Let us introduce a strong and mild solution of (1.1), (1.2) defined on $[0,T]$.
A strong and mild solution of (1.1), (1.2) defined on $[0,\infty)$ is similarly defined.
\begin{definition}
$(u,\theta)$ is called a strong solution of (1.1), (1.2) if it satisfies
\begin{equation*}
u \in C([0,T];X^{\alpha_{0}}_{p})\cap C((0,T];X^{1}_{p}), \ d_{t}u \in C((0,T];L^{p}_{\sigma}(\Omega)),
\end{equation*}
\begin{equation*}
\theta \in C([0,T];Y^{\beta_{0}}_{q})\cap C((0,T];Y^{1}_{q}), \ d_{t}\theta \in C((0,T];L^{q}(\Omega))
\end{equation*}
and (I).
\end{definition}
\begin{definition}
$(u,\theta)$ is called a mild solution of (1.1), (1.2) if it satisfies
\begin{equation*}
u \in C([0,T];X^{\alpha_{0}}_{p}),
\end{equation*}
\begin{equation*}
\theta \in C([0,T];Y^{\beta_{0}}_{q})
\end{equation*}
and (II).
\end{definition}

\subsection{Main results}
We will state our main results in this subsection.
It is sufficient for our main results to be assumed that $f \in C^{0,1}(\mathbb{R};\mathbb{R}^{n})$ with the Lipschitz constant $L_{f}$, $f(0)=0$, $p$, $q$, $\alpha_{0}$ and $\beta_{0}$ satisfy the following inequalities:
\begin{equation}
\max\left\{1, \frac{n}{3} \right\}<p<\infty, \ 1<q<\infty, \ \frac{1}{p}-\frac{1}{2q}<\frac{1}{n}, \ \frac{1}{q}-\frac{1}{p}<\frac{2}{n},
\end{equation}
\begin{equation}
\begin{split}
&\max\left\{0, \frac{n}{2p}-\frac{1}{2}\right\}\leq\alpha_{0}<1, \ 0\leq\beta_{0}<1, \\
&\alpha_{0}-\frac{\beta_{0}}{2}-\frac{n}{2}\left(\frac{1}{p}-\frac{1}{2q}\right)\geq 0, \ -1<\alpha_{0}-\beta_{0}-\frac{n}{2}\left(\frac{1}{p}-\frac{1}{q}\right)\leq 1.
\end{split}
\end{equation}
The first purpose of this paper is to study the existence and uniqueness of mild solutions of (1.1), (1.2).
We shall prove the following theorems:
\begin{theorem}
Let $f \in C^{0,1}(\mathbb{R};\mathbb{R}^{n})$ with the Lipschitz constant $L_{f}$, $f(0)=0$, $p$, $q$, $\alpha_{0}$ and $\beta_{0}$ satisfy $(2.9)$, $(2.10)$, $u_{0} \in X^{\alpha_{0}}_{p}$, $\theta_{0} \in Y^{\beta_{0}}_{q}$.
Then there exists a positive constant $T_{*}\leq T$ depending only on $n$, $\Omega$, $p$, $q$, $\alpha_{0}$, $\beta_{0}$, $u_{0}$, $\theta_{0}$, $L_{f}$ and $T$ such that $(1.1)$, $(1.2)$ has uniquely a mild solution $(u,\theta)$ on $[0,T_{*}]$ satisfying the following continuity properties and estimates:
\begin{itemize}
\item[\rm{(i)}]For any $\alpha_{0}\leq\alpha<1$, $\beta_{0}\leq\beta<1$, $0<t\leq T_{*}$,
\begin{equation*}
t^{\alpha-\alpha_{0}}u \in C([0,T_{*}];X^{\alpha}_{p}),
\end{equation*}
\begin{equation*}
t^{\beta-\beta_{0}}\theta \in C([0,T_{*}];Y^{\beta}_{q}),
\end{equation*}
\begin{equation}
\|u(t)\|_{X^{\alpha}_{p}}\leq Ct^{\alpha_{0}-\alpha}(\|u_{0}\|_{X^{\alpha_{0}}_{p}}+\|\theta_{0}\|_{Y^{\beta_{0}}_{q}}),
\end{equation}
\begin{equation}
\|\theta(t)\|_{Y^{\beta}_{q}}\leq Ct^{\beta_{0}-\beta}(\|u_{0}\|_{X^{\alpha_{0}}_{p}}+\|\theta_{0}\|_{Y^{\beta_{0}}_{q}}),
\end{equation}
where $C$ is a positive constant independent of $u$, $\theta$ and $t$.
\item[\rm{(ii)}]For any $\alpha_{0}<\alpha<1$, $\beta_{0}<\beta<1$,
\begin{equation}
\|u(t)\|_{X^{\alpha}_{p}}=o(t^{\alpha_{0}-\alpha}) \ \mathrm{as} \ t\rightarrow +0,
\end{equation}
\begin{equation}
\|\theta(t)\|_{Y^{\beta}_{q}}=o(t^{\beta_{0}-\beta}) \ \mathrm{as} \ t\rightarrow +0.
\end{equation}
\end{itemize}
\end{theorem}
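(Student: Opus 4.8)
The plan is to solve the integral system (II) by the Fujita--Kato successive approximation (equivalently Banach's fixed point theorem) in a space of functions carrying the singular time weights that appear in (2.11)--(2.12). I would fix auxiliary exponents $\alpha_{1}$ with $\alpha_{0}\le\alpha_{1}<1$ and $\beta_{1}$ with $\beta_{0}\le\beta_{1}<1$ (or finitely many such exponents), chosen below so that all four nonlinear terms become estimable, and set
\[
\|(u,\theta)\|_{Z}:=\sup_{0<t\le T_{*}}\Bigl(t^{\alpha_{1}-\alpha_{0}}\|u(t)\|_{X^{\alpha_{1}}_{p}}+t^{\beta_{1}-\beta_{0}}\|\theta(t)\|_{Y^{\beta_{1}}_{q}}+\|u(t)\|_{X^{\alpha_{0}}_{p}}+\|\theta(t)\|_{Y^{\beta_{0}}_{q}}\Bigr).
\]
Let $Z$ be the closed ball of radius $R$ (comparable to $\|u_{0}\|_{X^{\alpha_{0}}_{p}}+\|\theta_{0}\|_{Y^{\beta_{0}}_{q}}$) in the complete metric space of pairs for which this weighted sup-norm is finite and which are continuous into the respective spaces on $(0,T_{*}]$. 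Defining $\Psi(u,\theta)=(\Psi_{1},\Psi_{2})$ by the right-hand sides of (II), the whole proof reduces to showing that, for $T_{*}$ small, $\Psi$ maps $Z$ into itself and is a strict contraction.

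The heart of the matter is estimating $F$ and $G$ in $L^{p}_{\sigma}(\Omega)$ and $L^{q}(\Omega)$. Using the continuous embeddings of Lemma 2.4, I would bound each term by a product of fractional-power norms with a compensating power of $s$: for the Navier--Stokes nonlinearity, $\|P_{p}(u\cdot\nabla)u(s)\|_{p}\le C\|u(s)\|^{2}_{X^{\alpha_{1}}_{p}}$ via $X^{\alpha_{1}}_{p}\hookrightarrow L^{r}$ and $X^{\alpha_{1}}_{p}\hookrightarrow W^{1,\tilde r}$ with H\"older; for the buoyancy term, $\|P_{p}f(\theta)(s)\|_{p}\le CL_{f}\|\theta(s)\|_{p}\le CL_{f}\|\theta(s)\|_{Y^{\beta_{0}}_{q}}$ using $f(0)=0$, the pointwise Lipschitz bound, and $Y^{\beta_{0}}_{q}\hookrightarrow L^{p}$ (this is where $\frac1q-\frac1p<\frac2n$ enters); for the thermal advection, $\|(u\cdot\nabla)\theta(s)\|_{q}\le C\|u(s)\|_{X^{\alpha_{1}}_{p}}\|\theta(s)\|_{Y^{\beta_{1}}_{q}}$ (where the coupling conditions of (2.10) enter); and for the viscous dissipation, $\|\Phi(u)(s)\|_{q}\le C\|u(s)\|^{2}_{X^{\alpha_{1}}_{p}}$ via $X^{\alpha_{1}}_{p}\hookrightarrow W^{1,2q}$ (where $\frac1p-\frac1{2q}<\frac1n$ forces an admissible $\alpha_{1}<1$). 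Conditions (2.9)--(2.10) are precisely what make all these embeddings available with $\alpha_{1}\in[\alpha_{0},1)$, $\beta_{1}\in[\beta_{0},1)$ chosen so that the resulting weight exponents are integrable.

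With these pointwise-in-$s$ bounds, I would apply the smoothing estimate of Lemma 2.1, $\|A^{\alpha}_{p}e^{-(t-s)A_{p}}\cdot\|_{p}\le C(t-s)^{-\alpha}\|\cdot\|_{p}$ (and its $B_{q}$ analogue), to the Duhamel terms. The linear parts give $\|A^{\alpha_{1}}_{p}e^{-tA_{p}}u_{0}\|_{p}\le Ct^{-(\alpha_{1}-\alpha_{0})}\|u_{0}\|_{X^{\alpha_{0}}_{p}}$ after writing $A^{\alpha_{1}}_{p}e^{-tA_{p}}u_{0}=A^{\alpha_{1}-\alpha_{0}}_{p}e^{-tA_{p}}(A^{\alpha_{0}}_{p}u_{0})$, producing exactly the weights $t^{\alpha_{0}-\alpha_{1}}$, $t^{\beta_{0}-\beta_{1}}$; the nonlinear convolutions are controlled by integrals $\int_{0}^{t}(t-s)^{-\alpha_{1}}s^{-\delta}\,ds=C\,t^{1-\alpha_{1}-\delta}$ (Beta function), where $\delta$ collects the weights of the two factors, and the conditions guarantee $\alpha_{1}+\delta<1$, leaving a positive power of $t$ and a small factor $T_{*}^{\eta}$, $\eta>0$. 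This yields self-mapping, and the same computation applied to $\Psi(u,\theta)-\Psi(u',\theta')$ via the Lipschitz/bilinear structure of $F,G$ gives a contraction constant $<1$ for $T_{*}$ small; Banach's theorem then produces the unique fixed point $(u,\theta)\in Z$. Estimates (2.11)--(2.12) for general $\alpha\in[\alpha_{0},1)$, $\beta\in[\beta_{0},1)$ follow by rerunning the Duhamel estimate at exponent $\alpha,\beta$ on the constructed solution, continuity at $t=0$ in $X^{\alpha_{0}}_{p}$, $Y^{\beta_{0}}_{q}$ comes from strong continuity of the semigroups together with vanishing of the integral terms as $t\to+0$, and the $o(\cdot)$ refinement (ii) is obtained by replacing Lemma 2.1 with Lemma 2.3 for the linear part while noting that the nonlinear part carries a strictly higher power of $t$.

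I expect the main obstacle to be the viscous dissipation term $\Phi(u)\sim|\nabla u|^{2}$, the very term retained here in contrast to the Boussinesq approximation. Being quadratic in $\nabla u$, controlling it in $L^{q}$ forces $\nabla u\in L^{2q}$, hence a comparatively large fractional power $\alpha_{1}$ close to $1$ through $X^{\alpha_{1}}_{p}\hookrightarrow W^{1,2q}$, while the time weight $s^{-2(\alpha_{1}-\alpha_{0})}$ produced by the two factors must still be integrable against $(t-s)^{-\beta_{1}}$. Reconciling these competing demands, simultaneously with the cross-coupling through $f(\theta)$ in the velocity equation and through advection in the temperature equation, is exactly what pins down the admissible ranges (2.9)--(2.10) and constitutes the delicate part of the argument; the remainder is the standard Fujita--Kato bookkeeping.
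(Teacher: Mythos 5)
Your overall strategy --- Fujita--Kato iteration in time-weighted spaces built on the fractional-power scales, with the four nonlinear estimates you list --- is the same as the paper's (Lemmas 2.5--2.8 and the successive approximation of Section 3.1), but two steps in your sketch would fail as written. First, your mechanism for closing the self-map/contraction is ``a positive power of $t$ and a small factor $T_{*}^{\eta}$, $\eta>0$,'' together with a ball of radius $R$ comparable to the data. Under the full range of (2.9), (2.10) this does not work: the constraints of Lemma 2.5 force $2\alpha_{1}+\delta_{1}\geq \tfrac{n}{2p}+\tfrac12$, and when $\alpha_{0}=\tfrac{n}{2p}-\tfrac12$ the weighted power $1+\alpha_{0}-2\alpha_{1}-\delta_{1}$ is exactly zero, so the quadratic term contributes $O(R)$ to the contraction constant with no compensating power of $T_{*}$; since $R$ is not small for general data, the map need not contract. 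The paper's fix is to take the auxiliary exponents \emph{strictly} larger than $\alpha_{0},\beta_{0}$ so that, by Lemma 2.3, the weighted norm $K^{0}(t)$ of the \emph{free} solution tends to zero as $t\rightarrow+0$; smallness comes from $K^{0}(T_{*})$, not from $T_{*}^{\eta}$. Relatedly, your buoyancy bound $\|P_{p}f(\theta)\|_{p}\leq CL_{f}\|\theta\|_{Y^{\beta_{0}}_{q}}$ is not available: $Y^{\beta_{0}}_{q}\hookrightarrow L^{p}$ requires $\beta_{0}\geq\tfrac{n}{2}(\tfrac1q-\tfrac1p)$, which (2.10) does not assume; one must use $Y^{\beta_{1}}_{q}$ with $\beta_{1}\geq\tfrac{n}{2}(\tfrac1q-\tfrac1p)$ (Lemma 2.8), and in the admissible borderline case $\alpha_{0}-\beta_{0}-\tfrac{n}{2}(\tfrac1p-\tfrac1q)=1$ this forces $1+\beta_{0}-\alpha_{0}-\beta_{1}=0$, so the \emph{linear} coupling term carries an $O(1)$ coefficient that neither a power of $t$ nor $K^{0}$-smallness controls. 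The paper handles this by a two-step iteration inequality (Lemma 3.3, inequality (3.15)), exploiting that the temperature equation feeds back only quadratically; a one-step contraction does not close there.

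Second, uniqueness. Banach's fixed point theorem gives uniqueness only within your ball $Z$, whereas the theorem asserts uniqueness among all mild solutions satisfying (i) and (ii). To get that, the paper runs a separate continuous-dependence argument (Lemma 3.5): the $o(t^{\alpha_{0}-\alpha})$, $o(t^{\beta_{0}-\beta})$ conditions make the relevant coefficient $N(t)$ vanish at $t=0$, yielding coincidence on a small initial interval, and then a stepping procedure with a \emph{uniform} step length $\tau_{1}$ (using the integral identity from time $\tau$) propagates coincidence to all of $[0,T_{*}]$. You would need to add both the reduction of an arbitrary solution satisfying (ii) to your ball near $t=0$ and this continuation argument. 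The remaining items --- deriving (2.11)--(2.14) by rerunning the Duhamel estimates on the constructed solution and using Lemmas 2.1--2.3 --- are as in the paper.
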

\begin{theorem}
Let $f \in C^{0,1}(\mathbb{R};\mathbb{R}^{n})$ with the Lipschitz constant $L_{f}$, $f(0)=0$, $p$, $q$, $\alpha_{0}$ and $\beta_{0}$ satisfy $(2.9)$, $(2.10)$, $u_{0} \in X^{\alpha_{0}}_{p}$, $\theta_{0} \in Y^{\beta_{0}}_{q}$, $0<\lambda<\Lambda_{1}$.
Then there exists a positive constant $\varepsilon$ depending only on $n$, $\Omega$, $p$, $q$, $\alpha_{0}$, $\beta_{0}$, $L_{f}$ and $\lambda$ such that $(1.1)$, $(1.2)$ has uniquely a mild solution $(u,\theta)$ on $[0,\infty)$ satisfying the following continuity properties and estimates:
\begin{equation*}
t^{\alpha-\alpha_{0}}e^{\lambda t}u \in C_{b}([0,\infty);X^{\alpha}_{p}),
\end{equation*}
\begin{equation*}
t^{\beta-\beta_{0}}e^{\lambda t}\theta \in C_{b}([0,\infty);Y^{\beta}_{q}),
\end{equation*}
\begin{equation}
\|u(t)\|_{X^{\alpha}_{p}}\leq Ct^{\alpha_{0}-\alpha}e^{-\lambda t}(\|u_{0}\|_{X^{\alpha_{0}}_{p}}+\|\theta_{0}\|_{Y^{\beta_{0}}_{q}}),
\end{equation}
\begin{equation}
\|\theta(t)\|_{Y^{\beta}_{q}}\leq Ct^{\beta_{0}-\beta}e^{-\lambda t}(\|u_{0}\|_{X^{\alpha_{0}}_{p}}+\|\theta_{0}\|_{Y^{\beta_{0}}_{q}})
\end{equation}
for any $\alpha_{0}\leq\alpha<1$, $\beta_{0}\leq\beta<1$, $t>0$, where $C$ is a positive constant independent of $u$, $\theta$ and $t$ provided that
\begin{equation*}
\|u_{0}\|_{X^{\alpha_{0}}_{p}}+\|\theta_{0}\|_{Y^{\beta_{0}}_{q}}\leq\varepsilon.
\end{equation*}
\end{theorem}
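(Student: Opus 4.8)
The plan is to construct the global solution directly as a fixed point of the integral map (II) on the whole half-line, in a function space carrying the exponential weight $e^{\lambda t}$, so that finiteness of the weighted norm is \emph{equivalent} to the decay estimates (2.15), (2.16). I would choose auxiliary exponents $\alpha_{1}\in(\alpha_{0},1)$ and $\beta_{1}\in(\beta_{0},1)$ large enough that the embeddings of Lemma 2.4 control the products appearing in $F$ and $G$, and introduce
\[
\|(u,\theta)\|_{*}:=\sup_{t>0}e^{\lambda t}\Bigl(\|u(t)\|_{X^{\alpha_{0}}_{p}}+t^{\alpha_{1}-\alpha_{0}}\|u(t)\|_{X^{\alpha_{1}}_{p}}+\|\theta(t)\|_{Y^{\beta_{0}}_{q}}+t^{\beta_{1}-\beta_{0}}\|\theta(t)\|_{Y^{\beta_{1}}_{q}}\Bigr).
\]
Let $\mathcal{M}$ be the complete metric space of pairs $(u,\theta)$ with $t^{\alpha-\alpha_{0}}e^{\lambda t}u\in C_{b}([0,\infty);X^{\alpha}_{p})$ and $t^{\beta-\beta_{0}}e^{\lambda t}\theta\in C_{b}([0,\infty);Y^{\beta}_{q})$ for $\alpha\in\{\alpha_{0},\alpha_{1}\}$, $\beta\in\{\beta_{0},\beta_{1}\}$ and $\|(u,\theta)\|_{*}<\infty$, and let $\Psi(u,\theta)$ denote the pair given by the two right-hand sides of (II). A fixed point of $\Psi$ in a small ball of $\mathcal{M}$ will be the desired global mild solution.

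The linear terms are handled by Lemma 2.1 with decay rate $\lambda$: factoring $A^{\alpha}_{p}e^{-tA_{p}}u_{0}=A^{\alpha-\alpha_{0}}_{p}e^{-tA_{p}}(A^{\alpha_{0}}_{p}u_{0})$ gives $\|e^{-tA_{p}}u_{0}\|_{X^{\alpha}_{p}}\le Ct^{\alpha_{0}-\alpha}e^{-\lambda t}\|u_{0}\|_{X^{\alpha_{0}}_{p}}$ and similarly for the heat semigroup, so the data contribution to $\|\Psi(u,\theta)\|_{*}$ is $\le C(\|u_{0}\|_{X^{\alpha_{0}}_{p}}+\|\theta_{0}\|_{Y^{\beta_{0}}_{q}})$. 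For the genuinely quadratic nonlinearities $(u\cdot\nabla)u$, $(u\cdot\nabla)\theta$ and $\Phi(u)$ I would bound the Duhamel integrand pointwise in $s$ by pairing Lemma 2.1 for the semigroup factor with a H\"older estimate of the product, each factor controlled via Lemma 2.4; the admissibility of these H\"older pairings is exactly what (2.9), (2.10) encode. Each such integrand carries two weights $e^{-\lambda s}$, so using $e^{-\lambda(t-s)}e^{-2\lambda s}=e^{-\lambda t}e^{-\lambda s}$ I can factor out $e^{-\lambda t}$ and reduce to integrals $\int_{0}^{t}(t-s)^{-a}s^{-b}e^{-\lambda s}\,ds$, which are bounded uniformly on $(0,\infty)$ precisely when $a<1$ and $b<1$; the resulting bound is quadratic, $\le C\|(u,\theta)\|_{*}^{2}$.

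The buoyancy term $P_{p}f(\theta)$ is the exception: since $f(0)=0$ and $f$ is Lipschitz, it is only linear in $\theta$. Applying Lemma 2.1 with a decay rate $\lambda_{1}\in(\lambda,\Lambda_{1})$ strictly larger than the weight (possible precisely because $\lambda<\Lambda_{1}$), the surplus factor $e^{-(\lambda_{1}-\lambda)(t-s)}$ renders its Duhamel integral uniformly bounded, but with a coefficient that is a fixed multiple of $L_{f}$ and cannot be made small. The decisive structural point is that velocity enters the temperature equation only through the quadratic terms $(u\cdot\nabla)\theta$ and $\Phi(u)$, so the linear coupling is one-way, i.e.\ strictly triangular in $(u,\theta)$; passing to an equivalent product norm that weights $\theta$ heavily makes this coupling contractive, while the genuinely quadratic terms are small on a ball of radius comparable to $\varepsilon$. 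Hence, with $\varepsilon$ small (its size fixed by the constants above, depending only on $n,\Omega,p,q,\alpha_{0},\beta_{0},L_{f},\lambda$), the smallness hypothesis makes $\Psi$ map a small ball of $\mathcal{M}$ into itself and contract there; the unique fixed point is the global mild solution. The weighted bound gives (2.15), (2.16) for $\alpha\in\{\alpha_{0},\alpha_{1}\}$ and $\beta\in\{\beta_{0},\beta_{1}\}$, and one further application of the Duhamel estimate to this solution extends them to all $\alpha_{0}\le\alpha<1$, $\beta_{0}\le\beta<1$; uniqueness on each bounded interval, and thus on $[0,\infty)$, follows from Theorem 2.1.

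The main obstacle is the viscous dissipation $\Phi(u)\sim|\nabla u|^{2}$: as the least regular nonlinearity it forces $u$ to be tracked at a fractional level near $1/2$, and arranging the H\"older--embedding estimate so that $|\nabla u|^{2}$ is absorbed into the $Y^{\beta}_{q}$-scale with a time singularity $(t-s)^{-a}$ still integrable up to $s=t$ is exactly what pins down the admissible ranges of $p,q,\alpha_{0},\beta_{0}$ in (2.9), (2.10). The secondary difficulty is the global-in-time bookkeeping of the one-way buoyancy coupling over $[0,\infty)$, which is why the spectral gap $0<\lambda<\Lambda_{1}$ appears among the hypotheses.
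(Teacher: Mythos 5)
Your strategy is genuinely different from the paper's. The paper does not set up a global contraction: it first produces the local solution of Theorem 2.1 by successive approximation, and then proves Theorem 2.2 by a global a priori estimate (Lemma 3.6). There it introduces the weighted quantities $E_{1,\alpha}(t)=\sup_{0<s\leq t}s^{\alpha-\alpha_{0}}e^{\lambda s}\|u(s)\|_{X^{\alpha}}$ and $E_{2,\beta}(t)=\sup_{0<s\leq t}s^{\beta-\beta_{0}}e^{\lambda_{2}s}\|\theta(s)\|_{Y^{\beta}}$ with \emph{two distinct rates} $\lambda<\lambda_{2}<\min\{2\lambda,\lambda_{1}\}<\Lambda_{1}$, derives $E_{1,\alpha}\leq C(\|u_{0}\|+E^{2}+L_{f}E_{2,\beta_{1}})$ and $E_{2,\beta}\leq C(\|\theta_{0}\|+E^{2})$, and closes the loop by substituting the second inequality into the first, which turns the non-small linear buoyancy term into ``data plus quadratic'' and yields $E\leq C(D+E^{2})$; smallness of $D$ then gives $E\leq CD$ by the standard continuity argument, and uniqueness comes from the continuous-dependence Lemma 3.5 applied on finite intervals. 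Your direct fixed-point argument in a globally weighted space exploits exactly the same two structural facts (the spectral gap $\lambda<\Lambda_{1}$ and the triangular nature of the linear coupling $\theta\mapsto P_{p}f(\theta)$), and your device of rescaling the product norm to make the affine part contractive is a legitimate substitute for the paper's substitution step. What the paper's route buys is that local existence, uniqueness and the continuity properties at $t=0$ are already in hand from Theorem 2.1; what yours buys is a self-contained one-shot construction on $[0,\infty)$.

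One concrete step of your plan needs repair. You weight both components with the \emph{same} rate $e^{\lambda t}$. Feeding $\|\theta(s)\|_{Y^{\beta_{1}}}\leq Cs^{\beta_{0}-\beta_{1}}e^{-\lambda s}D$ into the buoyancy integral gives, after extracting $e^{-\lambda t}$, the factor $t^{\alpha-\alpha_{0}}\int_{0}^{t}(t-s)^{-\alpha}e^{-(\lambda_{1}-\lambda)(t-s)}s^{\beta_{0}-\beta_{1}}ds$, whose large-$t$ behaviour is $t^{(\alpha-\alpha_{0})-(\beta_{1}-\beta_{0})}$; this is unbounded once $\alpha-\alpha_{0}>\beta_{1}-\beta_{0}$, which happens for $\alpha$ near $1$ since $(2.33)$ forces $\beta_{1}-\beta_{0}\leq 1-\alpha_{0}$. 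So as written you do not recover the full prefactor $t^{\alpha_{0}-\alpha}$ in $(2.15)$ for large times. The fix is exactly the paper's two-rate device (give $\theta$ the faster weight $e^{\lambda_{2}t}$, so that the surviving factor $e^{-(\lambda_{2}-\lambda)s}$ kills the polynomial growth), or equivalently running your whole scheme at some $\lambda'\in(\lambda,\Lambda_{1})$ and trading the surplus exponential decay for the polynomial factor. Similarly, your claim that $\int_{0}^{t}(t-s)^{-a}s^{-b}e^{-\lambda s}ds$ is uniformly bounded whenever $a<1$, $b<1$ fails near $t=0$ unless $a+b\leq 1$; it is the prefactor $t^{\alpha-\alpha_{0}}$ together with the exponent choices $(2.32)$ that saves this, so that condition should be invoked explicitly. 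Neither point invalidates the approach, but both must be addressed for the stated estimates to come out.
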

The second purpose of this paper is to discuss the regularity of mild solutions of (1.1), (1.2).
As for the regularity of $(d_{t}u,d_{t}\theta)$, it will be required that $p$, $q$, $\alpha_{0}$ and $\beta_{0}$ satisfy the following inequalities:
\begin{equation}
\alpha_{0}\geq n\left(\frac{1}{p}-\frac{1}{2q}\right), \ \beta_{0}\geq \frac{n}{2p}-\frac{1}{2},
\end{equation}
\begin{equation}
2\alpha_{0}-\beta_{0}\geq \frac{n}{2p}-\frac{1}{2}.
\end{equation}
We shall prove the following theorems:
\begin{theorem}
If a mild solution $(u,\theta)$ of $(1.1)$, $(1.2)$ in Theorem $2.1$ is defined on $[0,T]$, then $(u,\theta)$ is a strong solution of $(1.1)$, $(1.2)$ on $[0,T]$ satisfying the following continuity properties and estimates:
\begin{itemize}
\item[\rm{(i)}]For some $0<\hat{\alpha}<1$, $0<\hat{\beta}<1$,
\begin{equation*}
u \in C^{0,\hat{\alpha}}((0,T];X^{1}_{p}), \ d_{t}u \in C^{0,\hat{\alpha}}((0,T];L^{p}_{\sigma}(\Omega)),
\end{equation*}
\begin{equation*}
\theta \in C^{0,\hat{\beta}}((0,T];Y^{1}_{q}), \ d_{t}\theta \in C^{0,\hat{\beta}}((0,T];L^{q}(\Omega)),
\end{equation*}
and for any $0<t\leq T$,
\begin{equation}
\|u(t)\|_{X^{1}_{p}}\leq Ct^{\alpha_{0}-1}(\|u_{0}\|_{X^{\alpha_{0}}_{p}}+\|\theta_{0}\|_{Y^{\beta_{0}}_{q}}),
\end{equation}
\begin{equation}
\|\theta(t)\|_{Y^{1}_{q}}\leq Ct^{\beta_{0}-1}(\|u_{0}\|_{X^{\alpha_{0}}_{p}}+\|\theta_{0}\|_{Y^{\beta_{0}}_{q}}),
\end{equation}
where $C$ is a positive constant independent of $u$, $\theta$ and $t$.
\item[\rm{(ii)}]For any $0<\hat{\alpha}<1$, $0<\hat{\beta}<1$, $0\leq\alpha<1$, $0\leq\beta<1$, $0<\tilde{\alpha}<1-\alpha$, $0<\tilde{\beta}<1-\beta$,
\begin{equation*}
u \in C^{0,\hat{\alpha}}((0,T];X^{1}_{p}), \ d_{t}u \in C^{0,\tilde{\alpha}}((0,T];X^{\alpha}_{p}),
\end{equation*}
\begin{equation*}
\theta \in C^{0,\hat{\beta}}((0,T];Y^{1}_{q}), \ d_{t}\theta \in C^{0,\tilde{\beta}}((0,T];Y^{\beta}_{q})
\end{equation*}
provided that $p$, $q$, $\alpha_{0}$ and $\beta_{0}$ satisfy $(2.17)$.
\item[\rm{(iii)}]For any $0\leq\alpha<1$, $0\leq\beta<1$, $0<t\leq T$,
\begin{equation}
\|d_{t}u(t)\|_{X^{\alpha}_{p}}\leq Ct^{\alpha_{0}-\alpha-1}(\|u_{0}\|_{X^{\alpha_{0}}_{p}}+\|\theta_{0}\|_{Y^{\beta_{0}}_{q}}),
\end{equation}
\begin{equation}
\|d_{t}\theta(t)\|_{Y^{\beta}_{q}}\leq Ct^{\beta_{0}-\beta-1}(\|u_{0}\|_{X^{\alpha_{0}}_{p}}+\|\theta_{0}\|_{Y^{\beta_{0}}_{q}}),
\end{equation}
where $C$ is a positive constant independent of $u$, $\theta$ and $t$ provided that $p$, $q$, $\alpha_{0}$ and $\beta_{0}$ satisfy $(2.17)$, $(2.18)$.
\end{itemize}
\end{theorem}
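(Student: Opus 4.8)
The plan is to promote the mild solution to a strong one by the standard regularity theory for abstract semilinear parabolic equations (\cite[Chapter 3]{Henry}, \cite[Chapter 6]{Pazy}), whose single hypothesis is the \emph{local H\"older continuity in time} of the inhomogeneous terms evaluated along the solution. Thus the entire argument reduces to showing that $t\mapsto F(u(t),\theta(t))$ and $t\mapsto G(u(t),\theta(t))$ are locally H\"older continuous on $(0,T]$ with values in $L^{p}_{\sigma}(\Omega)$ and $L^{q}(\Omega)$ respectively. Once this is established, the abstract theorem gives $u(t)\in X^{1}_{p}$, $\theta(t)\in Y^{1}_{q}$ for $t>0$, the existence and continuity of $d_{t}u$, $d_{t}\theta$, and the validity of (I), which is precisely the strong solution asserted in (i).

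First I would verify that $u$ and $\theta$ are themselves locally H\"older continuous in the fractional spaces. On any compact subinterval $[\delta,T]$ the estimates $(2.13)$, $(2.14)$ of Theorem $2.1$ give uniform bounds $\|u(t)\|_{X^{\alpha}_{p}},\|\theta(t)\|_{Y^{\beta}_{q}}\leq C(\delta)$ for $\alpha_{0}\leq\alpha<1$, $\beta_{0}\leq\beta<1$. Writing, from (II), $u(t+h)-u(t)=(e^{-hA_{p}}-I_{p})u(t)+\int^{t+h}_{t}e^{-(t+h-s)A_{p}}F(s)\,ds$ and applying Lemma $2.2$ to the first term and Lemma $2.1$ to the integral, together with the local boundedness of $F$, one obtains $\|u(t+h)-u(t)\|_{X^{\alpha}_{p}}\leq C(\delta)h^{\gamma}$ for a suitable exponent $\gamma$, and similarly for $\theta$ using $(2.5)$--$(2.6)$.

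The main obstacle is the viscous dissipation term $\Phi(u)=2\mu D(u):D(u)$ in $G$, which is quadratic in $\nabla u$ and is exactly the term absent from the Boussinesq approximation. To place $\Phi(u)$ in $L^{q}$ one needs $\nabla u\in L^{2q}$, i.e. $u\in X^{\alpha}_{p}$ with $\alpha\geq\frac{1}{2}+\frac{n}{2}\left(\frac{1}{p}-\frac{1}{2q}\right)$ through the embedding of Lemma $2.4$; this is what $(2.9)$ guarantees. For its H\"older continuity I would use bilinearity, $\Phi(u(t))-\Phi(u(s))=\Phi(u(t)-u(s),u(t))+\Phi(u(s),u(t)-u(s))$, so that $\|\Phi(u(t))-\Phi(u(s))\|_{q}\leq C\bigl(\|u(t)\|_{X^{\alpha}_{p}}+\|u(s)\|_{X^{\alpha}_{p}}\bigr)\|u(t)-u(s)\|_{X^{\alpha}_{p}}$, reducing it to the H\"older continuity of $u$ from the previous step. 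The force term $P_{p}f(\theta)$ is handled by the Lipschitz continuity of $f$ and the embedding $Y^{\beta}_{q}\hookrightarrow L^{p}$ of Lemma $2.4$, while the convection terms $P_{p}(u\cdot\nabla)u$ and $(u\cdot\nabla)\theta$ are treated by H\"older's inequality and the Sobolev embeddings of Lemma $2.4$, exactly as in the Lipschitz estimates driving the contraction in Section $3$.

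With local H\"older continuity of $F$ and $G$ secured, the quantitative bounds $(2.21)$--$(2.22)$ follow by applying $A_{p}$ (resp. $B_{q}$) to (II): the homogeneous part $A_{p}e^{-tA_{p}}u_{0}$ is bounded by $Ct^{\alpha_{0}-1}\|u_{0}\|_{X^{\alpha_{0}}_{p}}$ via Lemma $2.1$, while the singular integral $\int^{t}_{0}A_{p}e^{-(t-s)A_{p}}F(s)\,ds$ is controlled by subtracting $F(t)$, since $\int^{t}_{0}A_{p}e^{-(t-s)A_{p}}\bigl[F(s)-F(t)\bigr]ds$ has integrand of size $C(t-s)^{\gamma-1}$ and the remaining piece equals $(I_{p}-e^{-tA_{p}})F(t)$. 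For parts (ii) and (iii) I would differentiate once more, using $d_{t}u=F(u,\theta)-A_{p}u$ and the analogous identity for $\theta$, and bootstrap the H\"older continuity into the fractional spaces $X^{\alpha}_{p}$, $Y^{\beta}_{q}$; here the extra hypotheses $(2.17)$, $(2.18)$ are precisely the embedding thresholds of Lemma $2.4$ that send $f(\theta)$, $(u\cdot\nabla)u$, $(u\cdot\nabla)\theta$ and $\Phi(u)$ into those spaces, yielding the H\"older continuity of $d_{t}u$, $d_{t}\theta$ and the rates $(2.23)$--$(2.24)$.
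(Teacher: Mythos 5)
Your argument for part (i) is sound and follows essentially the same route as the paper: bound $F(u,\theta)$, $G(u,\theta)$ in $L^{p}_{\sigma}\times L^{q}$ using the bilinear estimates and the decay rates of Theorem 2.1, deduce H\"older continuity of $t\mapsto (u(t),\theta(t))$ in the fractional spaces from the mild formulation, conclude H\"older continuity of $F(u,\theta)$, $G(u,\theta)$ by bilinearity, and then invoke the abstract regularity theory (the paper packages this as Lemmas 4.1 and 4.2, quoted from Fujita--Kato and Hishida); your ``subtract $F(t)$ from the singular integral'' device is exactly the content of Lemma 4.2 (iii) and yields (2.19), (2.20).

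There is, however, a genuine gap in your treatment of parts (ii) and (iii). The first-pass argument only gives $F(u,\theta)\in C^{0,\hat{\alpha}}((0,T];L^{p}_{\sigma}(\Omega))$ for $\hat{\alpha}<\min\{1-\alpha_{1},1-\beta_{1}\}$ (and similarly for $G$), because the H\"older exponent of $u$ in $X^{\alpha_{1}}_{p}$ obtained from the mild formulation is capped at $1-\alpha_{1}<1$. Feeding this into the abstract lemma yields $d_{t}u\in C^{0,\tilde{\alpha}}((0,T];X^{\alpha}_{p})$ only for $\alpha+\tilde{\alpha}<\min\{1-\alpha_{1},1-\beta_{1}\}$ --- this is precisely the weaker statement recorded in the paper's Remarks 4.1 and 4.2 --- whereas (ii) and (iii) claim the full ranges $0\leq\alpha<1$, $0<\tilde{\alpha}<1-\alpha$. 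To reach these one must upgrade the temporal regularity of $F(u,\theta)$ and $G(u,\theta)$ to \emph{Lipschitz} continuity ($C^{0,1}$), and your ``differentiate once more and bootstrap'' does not supply the mechanism: differentiating (I) in $t$ presupposes the very regularity of $d_{t}F(u,\theta)$ one is trying to establish. The paper closes this loop differently: starting from the integral equation based at time $\tau$, it derives a coupled system of singular integral inequalities for the difference quotients $\|F(u,\theta)(t+h)-F(u,\theta)(t)\|_{p}$ and $\|G(u,\theta)(t+h)-G(u,\theta)(t)\|_{q}$ (inequalities (4.21), (4.22)), and resolves it with a generalized Gronwall lemma with weakly singular kernels (Lemma 4.3) applied on $(\tau,T-h]$ with $\tau=t/2$, obtaining the $O(h)$ bound (4.23). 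Relatedly, your reading of (2.17), (2.18) as ``embedding thresholds that send the nonlinearities into those spaces'' is off the mark: the mapping properties into $L^{p}_{\sigma}\times L^{q}$ are already secured by (2.9), (2.10); the conditions (2.17), (2.18) instead control the singularity exponents appearing in this Gronwall iteration and in the final rates (4.24), (4.25), so that the Lipschitz estimate closes and the bounds (2.21), (2.22) come out with the powers $t^{\alpha_{0}-\alpha-1}$, $t^{\beta_{0}-\beta-1}$.
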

\begin{theorem}
Let $(u,\theta)$ be a mild solution of $(1.1)$, $(1.2)$ on $[0,\infty)$ satisfying continuity properties and estimates $(2.15)$, $(2.16)$ in Theorem $2.2$.
Then $(u,\theta)$ is a strong solution of $(1.1)$, $(1.2)$ on $[0,\infty)$ satisfying the following continuity properties and estimates:
\begin{itemize}
\item[\rm{(i)}]For some $0<\hat{\alpha}<1$, $0<\hat{\beta}<1$,
\begin{equation*}
u \in C^{0,\hat{\alpha}}((0,\infty);X^{1}_{p}), \ d_{t}u \in C^{0,\hat{\alpha}}((0,\infty);L^{p}_{\sigma}(\Omega)),
\end{equation*}
\begin{equation*}
\theta \in C^{0,\hat{\beta}}((0,\infty);Y^{1}_{q}), \ d_{t}\theta \in C^{0,\hat{\beta}}((0,\infty);L^{q}(\Omega)),
\end{equation*}
and for any $t>0$,
\begin{equation}
\|u(t)\|_{X^{1}_{p}}\leq Ct^{\alpha_{0}-1}e^{-\lambda t}(\|u_{0}\|_{X^{\alpha_{0}}_{p}}+\|\theta_{0}\|_{Y^{\beta_{0}}_{q}}),
\end{equation}
\begin{equation}
\|\theta(t)\|_{Y^{1}_{q}}\leq Ct^{\beta_{0}-1}e^{-\lambda t}(\|u_{0}\|_{X^{\alpha_{0}}_{p}}+\|\theta_{0}\|_{Y^{\beta_{0}}_{q}}),
\end{equation}
where $C$ is a positive constant independent of $u$, $\theta$ and $t$.
\item[\rm{(ii)}]For any $0<\hat{\alpha}<1$, $0<\hat{\beta}<1$, $0\leq\alpha<1$, $0\leq\beta<1$, $0<\tilde{\alpha}<1-\alpha$, $0<\tilde{\beta}<1-\beta$,
\begin{equation*}
u \in C^{0,\hat{\alpha}}((0,\infty);X^{1}_{p}), \ d_{t}u \in C^{0,\tilde{\alpha}}((0,\infty);X^{\alpha}_{p}),
\end{equation*}
\begin{equation*}
\theta \in C^{0,\hat{\beta}}((0,\infty);Y^{1}_{q}), \ d_{t}\theta \in C^{0,\tilde{\beta}}((0,\infty);Y^{\beta}_{q})
\end{equation*}
provided that $p$, $q$, $\alpha_{0}$ and $\beta_{0}$ satisfy $(2.17)$.
\item[\rm{(iii)}]For any $0\leq\alpha<1$, $0\leq\beta<1$, $t>0$,
\begin{equation}
\|d_{t}u(t)\|_{X^{\alpha}_{p}}\leq Ct^{\alpha_{0}-\alpha-1}e^{-\lambda t}(\|u_{0}\|_{X^{\alpha_{0}}_{p}}+\|\theta_{0}\|_{Y^{\beta_{0}}_{q}}),
\end{equation}
\begin{equation}
\|d_{t}\theta(t)\|_{Y^{\beta}_{q}}\leq Ct^{\beta_{0}-\beta-1}e^{-\lambda t}(\|u_{0}\|_{X^{\alpha_{0}}_{p}}+\|\theta_{0}\|_{Y^{\beta_{0}}_{q}}),
\end{equation}
where $C$ is a positive constant independent of $u$, $\theta$ and $t$ provided that $p$, $q$, $\alpha_{0}$ and $\beta_{0}$ satisfy $(2.17)$, $(2.18)$.
\end{itemize}
\end{theorem}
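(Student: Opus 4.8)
The plan is to treat Theorem 2.5 as the global-in-time counterpart of Theorem 2.3: the qualitative assertions---that $(u,\theta)$ is a strong solution and the H\"older continuity statements in (i) and (ii)---are local in the time variable, so they transfer immediately, whereas the genuinely new content is the exponential decay carried by the estimates (2.23)--(2.28). I would therefore first fix an arbitrary $T>0$ and observe that the restriction of $(u,\theta)$ to $[0,T]$ is a mild solution on $[0,T]$; by the uniqueness asserted in Theorem 2.1 it coincides with the solution there, so Theorem 2.3 applies on $[0,T]$ and yields that $(u,\theta)$ is a strong solution on $[0,T]$ with the H\"older regularity on $(0,T]$ asserted in (i) and (ii). Since $C^{0,\hat\alpha}((0,\infty);X^1_p)$ is defined through compact subintervals and $T$ is arbitrary, this gives the strong-solution property and (i), (ii) on all of $(0,\infty)$. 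It remains only to upgrade the local estimates to the globally decaying ones.

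For the decay estimates (2.23), (2.24) I would work directly from the integral equation (II). Writing $M:=\|u_0\|_{X^{\alpha_0}_p}+\|\theta_0\|_{Y^{\beta_0}_q}$ and applying $A_p$ to the $u$-equation gives
\[
\|u(t)\|_{X^1_p}\le \|A_p^{1-\alpha_0}e^{-tA_p}A_p^{\alpha_0}u_0\|_p+\int_0^t\|A_pe^{-(t-s)A_p}F(u,\theta)(s)\|_p\,ds.
\]
The first term is $\le Ct^{\alpha_0-1}e^{-\lambda t}M$ by Lemma 2.1. For the second term the key point is that each factor of the solution occurring in $F$ carries, by the a priori bounds (2.15), (2.16), a weight $e^{-\lambda s}$: the convective contribution $P_p(u\cdot\nabla)u$ decays like $e^{-2\lambda s}$, and the buoyancy term $P_pf(\theta)$ like $e^{-\lambda s}$ (using $f(0)=0$ and $\|f(\theta)\|_p\le L_f\|\theta\|_p$ together with the embeddings of Lemma 2.4). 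Estimating $\|F(u,\theta)(s)\|_{X^\gamma_p}\le Cs^{\sigma}e^{-\lambda s}(M+M^2)$ for a small $\gamma>0$, splitting $A_pe^{-(t-s)A_p}=A_p^{1-\gamma}e^{-(t-s)A_p}A_p^\gamma$ and invoking Lemma 2.1 reduces the integral to
\[
C\int_0^t(t-s)^{-(1-\gamma)}e^{-\lambda(t-s)}s^{\sigma}e^{-\lambda s}\,ds=Ce^{-\lambda t}\int_0^t(t-s)^{-(1-\gamma)}s^{\sigma}\,ds,
\]
where the identity $e^{-\lambda(t-s)}e^{-\lambda s}=e^{-\lambda t}$ lets $e^{-\lambda t}$ factor out and the remaining Beta integral produces the correct power of $t$ as soon as $\gamma>0$ and $\sigma>-1$. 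The same computation with $B_q$, $G(u,\theta)$ and the embeddings for $Y^\beta_q$ yields (2.24); here the quadratic terms $(u\cdot\nabla)\theta$ and $\Phi(u)$ decay like $e^{-2\lambda s}\le e^{-\lambda s}$.

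For the time-derivative estimates (2.27), (2.28) I would use that, $(u,\theta)$ being a strong solution, $d_tu=F(u,\theta)-A_pu$, so
\[
\|d_tu(t)\|_{X^\alpha_p}\le\|F(u,\theta)(t)\|_{X^\alpha_p}+\|u(t)\|_{X^{1+\alpha}_p}.
\]
Both terms must decay like $t^{\alpha_0-\alpha-1}e^{-\lambda t}$. The second is obtained exactly as above but measuring (II) in $X^{1+\alpha}_p$: the data term gives $Ct^{\alpha_0-\alpha-1}e^{-\lambda t}M$ via Lemma 2.1, and the Duhamel term is handled with the splitting $A_p^{1+\alpha}e^{-(t-s)A_p}=A_p^{1+\alpha-\gamma}e^{-(t-s)A_p}A_p^\gamma$, whose kernel is integrable precisely when the nonlinearity enjoys spatial regularity $\gamma>\alpha$. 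This requirement, together with $\|F(u,\theta)(t)\|_{X^\alpha_p}<\infty$ with the right decay, is exactly what the hypotheses (2.17), (2.18) secure through the embedding ranges of Lemma 2.4; the $\theta$-component is identical with $B_q$, $G$ and $Y^{1+\beta}_q$. The main obstacle is this nonlinear regularity bookkeeping rather than the decay mechanism, which is automatic once the $e^{-\lambda s}$ weights are tracked: one must verify that $F(u,\theta)(s)$ and $G(u,\theta)(s)$ genuinely lie in $X^\gamma_p$, $Y^\delta_q$ with $\gamma>\alpha$, $\delta>\beta$ and time-weights $s^\sigma$ satisfying $\sigma>-1$. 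The most demanding term is the viscous dissipation $\Phi(u)=2\mu D(u):D(u)$, which is quadratic in $\nabla u$ and hence needs the strongest Sobolev control of $u$, together with the coupling $P_pf(\theta)$, which transfers regularity from the temperature to the velocity; balancing these against the admissible embedding exponents---so that a strictly positive margin $\gamma-\alpha$, $\delta-\beta$ remains---is the delicate quantitative content encoded in (2.9), (2.10), (2.17), (2.18).
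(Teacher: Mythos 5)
Your overall architecture is reasonable in its first half: reducing the qualitative assertions (strong solution, H\"older continuity on compact subintervals of $(0,\infty)$) to Theorem 2.3, and tracking the weight $e^{-\lambda s}$ through the Duhamel term via $e^{-\lambda(t-s)}e^{-\lambda s}=e^{-\lambda t}$, is exactly the decay mechanism the paper has in mind when it says the proof of Theorem 2.4 is ``essentially the same'' as that of Theorem 2.3 (the exponential factors are already built into Lemma 2.1 and (2.34), (2.35)). The gap is in \emph{how} you tame the singular kernels $(t-s)^{-1}$ and $(t-s)^{-(1+\alpha)}$. You do it by asserting \emph{spatial} regularity of the nonlinearity, $\|F(u,\theta)(s)\|_{X^{\gamma}_{p}}\leq Cs^{\sigma}e^{-\lambda s}(M+M^{2})$ for some $\gamma>0$, and for part (iii) effectively $F(u,\theta)(t)\in X^{\alpha}_{p}$ together with $u(t)\in X^{1+\alpha}_{p}$ for every $\alpha<1$. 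Neither claim is available: Lemma 2.5 (Giga--Miyakawa) only controls $A^{-\delta_{1}}P(u\cdot\nabla)u$ for $\delta_{1}\geq 0$, not $A^{\gamma}P(u\cdot\nabla)u$ for $\gamma>0$; and for a merely Lipschitz $f$ there is no reason that $P f(\theta)$ lies in $D(A^{\gamma}_{p})$ at all, let alone in $D(A^{\alpha}_{p})$ for $\alpha$ near $1$, where membership forces boundary conditions that $Pf(\theta)$ does not satisfy. So the identity $\|d_{t}u\|_{X^{\alpha}}\leq\|F(u,\theta)\|_{X^{\alpha}}+\|u\|_{X^{1+\alpha}}$ cannot deliver (2.25) for all $0\leq\alpha<1$.

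The paper's actual mechanism is \emph{temporal}, not spatial, regularity of the nonlinearity. For the $X^{1}$-estimates it first shows $F(u,\theta)\in C^{0,\hat{\alpha}}((0,T];L^{p}_{\sigma}(\Omega))$ via (4.11)--(4.14) and then invokes Lemma 4.2 (ii), (iii), which bounds $\|\mathcal{F}(t)\|_{X^{1}}$ using the H\"older seminorm of $F$ in time (the classical Fujita--Kato device of writing $A\mathcal{F}(t)$ with $F(s)-F(t)$ inside the integral). For the $d_{t}$-estimates with all $\alpha<1$ it needs the Lipschitz-in-time bound (4.23) on $F(u,\theta)$ and $G(u,\theta)$, which is obtained by a bootstrap: the difference inequalities (4.21), (4.22) on $(\tau,T]$ are closed by the generalized Gronwall Lemma 4.3 with $\tau=t/2$, and only then does Lemma 4.2 (iv) give (2.21), (2.22) (and, with the exponential weights, (2.25), (2.26)). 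This entire layer --- the Gronwall iteration on $\|F(u,\theta)(t+h)-F(u,\theta)(t)\|_{p}+\|G(u,\theta)(t+h)-G(u,\theta)(t)\|_{q}$ --- is absent from your proposal, and it is precisely where the hypotheses (2.17), (2.18) enter: they guarantee that the exponents $2\alpha_{0}-\alpha_{1}-1$, $\beta_{0}-1$, $\alpha_{0}+\beta_{0}-\alpha_{2}-1$, etc.\ appearing in (4.21)--(4.23) are admissible for Lemma 4.3, not (as you suggest) that a positive margin of spatial regularity $\gamma-\alpha$ survives the embeddings of Lemma 2.4.
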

Some detailed considerations admit that a strong solution of (1.1), (1.2) with initial data $(u_{0},\theta_{0}) \in L^{p}_{\sigma}(\Omega)\times L^{q}(\Omega)$ can be grasped in the classical sense.
Let $p$ and $q$ satisfy the following inequalities:
\begin{equation}
n<p<\infty, \ n<q<\infty, \ \frac{1}{p}-\frac{1}{2q}\leq 0, \ \frac{1}{q}-\frac{1}{p}<\frac{2}{n}.
\end{equation}
Then we can take $\alpha_{0}$ and $\beta_{0}$ in (2.10), (2.17), (2.18) as zeros.
It is derived from Theorems 2.3 and 2.4 that we obtain the following corollaries:
\begin{corollary}
Let $f \in C^{0,1}(\mathbb{R};\mathbb{R}^{n})\cap C^{1}(\mathbb{R};\mathbb{R}^{n})$, $f(0)=0$, $p$ and $q$ satisfy $(2.27)$, $u_{0} \in L^{p}_{\sigma}(\Omega)$, $\theta_{0} \in L^{q}(\Omega)$.
Then a strong solution $(u,\theta)$ of $(1.1)$, $(1.2)$ in Theorem $2.3$ is a classical solution of $(1.1)$, $(1.2)$ in $(0,T]$ satisfying the following continuity properties:
\begin{equation*}
u \in C^{0,\hat{\alpha}}((0,T];(C^{2,\alpha}(\overline{\Omega}))^{n}), \ d_{t}u \in C^{0,\tilde{\alpha}}((0,T];(C^{1,\alpha}(\overline{\Omega}))^{n}),
\end{equation*}
\begin{equation*}
\theta \in C^{0,\hat{\beta}}((0,T];C^{2,\beta}(\overline{\Omega})), \ d_{t}\theta \in C^{0,\tilde{\beta}}((0,T];C^{1,\beta}(\overline{\Omega}))
\end{equation*}
for any $0<\hat{\alpha}<1/2$, $0<\hat{\beta}<1/2$, $0<\alpha<1-n/p$, $0<\beta<1-n/q$ and for some $0<\tilde{\alpha}<1$, $0<\tilde{\beta}<1$.
\end{corollary}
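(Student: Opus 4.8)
\emph{Overall strategy.} The plan is to take the strong solution furnished by Theorem 2.3 and upgrade it to a classical one by an elliptic (Schauder) bootstrap carried out pointwise in $t$, and then to transport the temporal H\"{o}lder regularity of Theorem 2.3 through the embeddings of Lemma 2.4 followed by the classical Morrey embedding $W^{2,r}(\Omega)\hookrightarrow C^{1,1-n/r}(\overline{\Omega})$ (available since $p,q>n$). First I would check that hypothesis $(2.27)$ makes $\alpha_{0}=\beta_{0}=0$ admissible: $p,q>n$ and $1/p-1/(2q)\leq 0$ force $(2.9)$, $(2.10)$, $(2.17)$ and $(2.18)$ to hold with $\alpha_{0}=\beta_{0}=0$, so Theorem 2.3 applies in full and supplies $u\in C^{0,\hat{\alpha}}((0,T];X^{1}_{p})$, $d_{t}u\in C^{0,\tilde{\alpha}}((0,T];X^{\alpha}_{p})$ for all admissible indices, together with the analogous statements for $\theta$.

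\emph{Spatial bootstrap.} As a baseline, $u(t)\in X^{1}_{p}=(W^{2,p}\cap W^{1,p}_{0})^{n}\cap L^{p}_{\sigma}$ with $p>n$ gives $u(t)\in(C^{1,1-n/p}(\overline{\Omega}))^{n}$, and $\theta(t)\in Y^{1}_{q}$ gives $\theta(t)\in C^{1,1-n/q}(\overline{\Omega})$; thus for $0<\alpha<1-n/p$ and $0<\beta<1-n/q$ both fields and their first derivatives are H\"{o}lder continuous. For the temperature I would rewrite $(1.1)_{3}$ (after the normalizations) as the Dirichlet problem $-\Delta\theta(t)=h(t)$, $\theta(t)|_{\partial\Omega}=0$, with $h=-d_{t}\theta-(u\cdot\nabla)\theta+\Phi(u)$. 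Each term lies in $C^{0,\beta}(\overline{\Omega})$: the convection term because $u,\nabla\theta\in C^{0,\beta}$, the dissipation $\Phi(u)=2D(u):D(u)$ because $\nabla u\in C^{0,\beta}$, and $d_{t}\theta(t)$ because $Y^{\beta'}_{q}\hookrightarrow C^{0,\beta}$ for $\beta'$ close to $1$ (as $q>n$) and $d_{t}\theta(t)\in Y^{\beta'}_{q}$. Since $\partial\Omega\in C^{2,1}\subset C^{2,\beta}$, boundary Schauder estimates for $-\Delta$ give $\theta(t)\in C^{2,\beta}(\overline{\Omega})$. For the velocity I would write $(1.1)_{2}$ as the stationary Stokes system $-\Delta u(t)+\nabla p(t)=g(t)$, $\mathrm{div}\,u(t)=0$, $u(t)|_{\partial\Omega}=0$, with $g=-d_{t}u+f(\theta)-(u\cdot\nabla)u$; here $(u\cdot\nabla)u\in(C^{0,\alpha})^{n}$, $f(\theta)\in(C^{0,\alpha})^{n}$ (using $f\in C^{1}$ and $\theta\in C^{0,\alpha}$), and $d_{t}u(t)\in(C^{0,\alpha})^{n}$ via $X^{\alpha'}_{p}\hookrightarrow(C^{0,\alpha})^{n}$. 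The classical stationary Stokes Schauder estimate then yields $u(t)\in(C^{2,\alpha}(\overline{\Omega}))^{n}$ and simultaneously recovers $p(t)\in C^{1,\alpha}(\overline{\Omega})$; with $\nabla p$ continuous and $d_{t}u,d_{t}\theta$ continuous, $(1.1)$, $(1.2)$ hold pointwise, i.e.\ $(u,\theta)$ is a classical solution on $(0,T]$.

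\emph{Temporal H\"{o}lder continuity.} Applying the same elliptic estimates to the differences $u(t)-u(s)$ and $\theta(t)-\theta(s)$, which solve the Stokes, resp.\ Poisson, problems with data $g(t)-g(s)$, resp.\ $h(t)-h(s)$, reduces the claim to H\"{o}lder-in-time bounds on those data in $C^{0,\alpha}$, resp.\ $C^{0,\beta}$. The convection and force differences are controlled through $u\in C^{0,\cdot}((0,T];X^{\alpha'}_{p})$ and $\theta\in C^{0,\cdot}((0,T];Y^{\beta'}_{q})$, while the binding contribution is $d_{t}u$ (resp.\ $d_{t}\theta$). By Theorem 2.3(ii), $d_{t}u\in C^{0,\tilde{\alpha}}((0,T];X^{\alpha'}_{p})$ with $\tilde{\alpha}<1-\alpha'$, and $X^{\alpha'}_{p}\hookrightarrow(C^{0,\alpha})^{n}$ forces $\alpha'\geq(\alpha+n/p)/2$, so the temporal exponent into $C^{0,\alpha}$ is at most $1-(\alpha+n/p)/2$, which decreases to $1/2$ as $\alpha\uparrow 1-n/p$; this is precisely the origin of the cut-off $\hat{\alpha}<1/2$ (and likewise $\hat{\beta}<1/2$). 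For $d_{t}u\in C^{0,\tilde{\alpha}}((0,T];(C^{1,\alpha})^{n})$ one instead uses $X^{\alpha'}_{p}\hookrightarrow(C^{1,\alpha})^{n}$ with $\alpha'\geq(1+\alpha+n/p)/2<1$, leaving a positive exponent and giving the ``for some $0<\tilde{\alpha}<1$'' clause.

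\emph{Main obstacle.} The temperature equation is routine, needing only interior and boundary Schauder theory for $-\Delta$. The essential difficulty is the velocity field: it requires the stationary Stokes Schauder estimate with the pressure treated as an unknown on a merely $C^{2,1}$ domain, together with a careful verification that $g=-d_{t}u+f(\theta)-(u\cdot\nabla)u$ lies in $(C^{0,\alpha})^{n}$ both pointwise in $t$ and in the H\"{o}lder-in-time sense. The delicate point is the exponent bookkeeping: the fractional indices $\alpha',\beta'$ must be chosen large enough to reach $C^{1,\alpha}$ or $C^{0,\alpha}$ yet strictly below $1$ so that Theorem 2.3(ii) still leaves positive temporal H\"{o}lder exponents, and it is exactly the competition in the $d_{t}u$ term that pins the sharp threshold $1/2$.
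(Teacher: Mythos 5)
Your proposal arrives at the right conclusion, but by a genuinely different route from the paper. You bootstrap directly in H\"{o}lder spaces: freeze $t$, view $(1.1)_{3}$ as a Poisson--Dirichlet problem and $(1.1)_{2}$ as a stationary Stokes system with unknown pressure, apply Schauder estimates, and then transport temporal H\"{o}lder continuity through the same elliptic estimates applied to differences, reading the exponents off Theorem 2.3(ii) combined with embeddings of $X^{\alpha'}_{p}$, $Y^{\beta'}_{q}$ into H\"{o}lder classes. The paper instead stays entirely inside the $L^{p}$-framework it has already built: it proves $W^{1,p}\times W^{1,q}$-estimates for the nonlinearities (Lemmas 5.1--5.4), deduces $F(u,\theta)\in C^{0,\hat{\alpha}}((0,T];(W^{1,p}(\Omega))^{n})$ (Lemma 5.5), and uses $u=A^{-1}(F(u,\theta)-d_{t}u)$, $\theta=B^{-1}(G(u,\theta)-d_{t}\theta)$ together with $d_{t}u\in C^{0,\tilde{\alpha}}((0,T];X^{1/2})$, $\tilde{\alpha}<1/2$, to get $u\in C^{0,\hat{\alpha}}((0,T];(W^{3,p}(\Omega))^{n})$ for $\hat{\alpha}<1/2$ (Lemma 5.6) --- this is where the threshold $1/2$ actually arises, as the cost of one spatial derivative on $d_{t}u$; for $d_{t}u\in C^{0,\tilde{\alpha}}((0,T];X^{1})$ it differentiates the abstract equation in $t$ and reapplies Lemma 4.2 (Lemmas 5.7--5.8), invoking $W^{k+1,p}(\Omega)\hookrightarrow C^{k,\alpha}(\overline{\Omega})$ only at the very last step. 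What the paper's route buys is that every ingredient is already in hand; what yours buys is directness, at the price of two external inputs not in the paper's toolkit: Schauder estimates for the stationary Stokes system with pressure on a $C^{2,1}$ domain, and the embedding $X^{\alpha'}_{p}\hookrightarrow(C^{1,\alpha}(\overline{\Omega}))^{n}$ for $\alpha'<1$, which does not follow from Lemma 2.4 as stated (for $\alpha'<1$ that lemma never reaches $W^{2,r}$, hence never $C^{1,\alpha}$). One step you should make explicit: the time-H\"{o}lder continuity of $t\mapsto f(\theta(t))$ in $C^{0,\alpha}(\overline{\Omega})$ is not immediate for $f$ merely in $C^{0,1}\cap C^{1}$, since the product rule would require $f'$ itself to be H\"{o}lder; interpolating between $C^{0}$ and $C^{0,1}$ salvages only the exponent $\hat{\beta}(1-\alpha)$, which for $p>2n$ and $\alpha$ near $1-n/p$ drops below $1/2$, so as written your argument does not recover the full range $\hat{\alpha}<1/2$ for that term. (The paper's Lemma 5.5 quietly faces the analogous difficulty with $f'(\theta(t))\nabla\theta(t)-f'(\theta(s))\nabla\theta(s)$, so this is a shared, fixable technicality rather than a defect peculiar to your route.)
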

\begin{corollary}
Let $f \in C^{0,1}(\mathbb{R};\mathbb{R}^{n})\cap C^{1}(\mathbb{R};\mathbb{R}^{n})$, $f(0)=0$, $p$ and $q$ satisfy $(2.27)$, $u_{0} \in L^{p}_{\sigma}(\Omega)$, $\theta_{0} \in L^{q}(\Omega)$.
Then a strong solution $(u,\theta)$ of $(1.1)$, $(1.2)$ in Theorem $2.4$ is a classical solution of $(1.1)$, $(1.2)$ in $(0,\infty)$ satisfying the following continuity properties:
\begin{equation*}
u \in C^{0,\hat{\alpha}}((0,\infty);(C^{2,\alpha}(\overline{\Omega}))^{n}), \ d_{t}u \in C^{0,\tilde{\alpha}}((0,\infty);(C^{1,\alpha}(\overline{\Omega}))^{n}),
\end{equation*}
\begin{equation*}
\theta \in C^{0,\hat{\beta}}((0,\infty);C^{2,\beta}(\overline{\Omega})), \ d_{t}\theta \in C^{0,\tilde{\beta}}((0,\infty);C^{1,\beta}(\overline{\Omega}))
\end{equation*}
for any $0<\hat{\alpha}<1/2$, $0<\hat{\beta}<1/2$, $0<\alpha<1-n/p$, $0<\beta<1-n/q$ and for some $0<\tilde{\alpha}<1$, $0<\tilde{\beta}<1$.
\end{corollary}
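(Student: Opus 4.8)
The plan is to obtain Corollary 2.2 as the $(0,\infty)$-counterpart of Corollary 2.1: I would run the argument that proves Corollary 2.1 on each bounded closed subinterval of $(0,\infty)$, with Theorem 2.4 in place of Theorem 2.3. First I would check that hypothesis (2.27) permits the choice $\alpha_{0}=\beta_{0}=0$, i.e. that (2.27) implies (2.9), (2.10), (2.17) and (2.18) with $\alpha_{0}=\beta_{0}=0$. Since $p,q>n\geq 2$ force $n/(2p)-1/2<0$, and (2.27) gives $1/p-1/(2q)\leq 0$ and $1/q-1/p<2/n$, every inequality in (2.9)--(2.18) reduces to a condition already contained in (2.27). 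Hence $u_{0}\in L^{p}_{\sigma}(\Omega)=X^{0}_{p}$ and $\theta_{0}\in L^{q}(\Omega)=Y^{0}_{q}$ are admissible, Theorem 2.4 applies, and it furnishes a strong solution $(u,\theta)$ on $[0,\infty)$ with $u\in C^{0,\hat{\alpha}}((0,\infty);X^{1}_{p})$, $d_{t}u\in C^{0,\tilde{\alpha}}((0,\infty);X^{\alpha}_{p})$ and the analogous statements for $\theta$.

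Next I would convert this abstract regularity into H\"older-space regularity pointwise in $t$. Combining Lemma 2.4 with the Sobolev--Morrey embedding $W^{m,r}(\Omega)\hookrightarrow C^{k,\gamma}(\overline{\Omega})$ (available because $\partial\Omega$ is $C^{2,1}$ and $p,q>n$), I obtain $u(t)\in(C^{1,1-n/p}(\overline{\Omega}))^{n}$ and $\theta(t)\in C^{1,1-n/q}(\overline{\Omega})$ from $X^{1}_{p}$, $Y^{1}_{q}$, and, choosing the fractional exponent close enough to $1$, $d_{t}u(t)\in(C^{1,\alpha}(\overline{\Omega}))^{n}$ for every $0<\alpha<1-n/p$ and $d_{t}\theta(t)\in C^{1,\beta}(\overline{\Omega})$ for every $0<\beta<1-n/q$ from $X^{\alpha}_{p}$, $Y^{\beta}_{q}$. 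In particular each term of
\[
-(u\cdot\nabla)u+f(\theta)-d_{t}u, \qquad -(u\cdot\nabla)\theta+\Phi(u)-d_{t}\theta
\]
lies in $(C^{0,\alpha}(\overline{\Omega}))^{n}$, respectively $C^{0,\beta}(\overline{\Omega})$: the convective and dissipation terms are products of H\"older functions, $f(\theta)$ is H\"older because $f\in C^{1}$ and $\theta(t)$ is H\"older, and the time derivatives were just handled.

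Then, reading (I) back at each fixed $t>0$ as the stationary Stokes system $-\Delta u+\nabla\pi=-(u\cdot\nabla)u+f(\theta)-d_{t}u$, $\mathrm{div}\,u=0$, $u|_{\partial\Omega}=0$, and the stationary Dirichlet problem $-\Delta\theta=-(u\cdot\nabla)\theta+\Phi(u)-d_{t}\theta$, $\theta|_{\partial\Omega}=0$, I would invoke the boundary Schauder estimates for the Stokes operator and for the Laplacian on a $C^{2,1}$ (hence $C^{2,\alpha}$) domain to upgrade $u(t)$ to $(C^{2,\alpha}(\overline{\Omega}))^{n}$ and $\theta(t)$ to $C^{2,\beta}(\overline{\Omega})$. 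The temporal H\"older continuity $C^{0,\hat{\alpha}}$, $C^{0,\tilde{\alpha}}$ is inherited from the $X^{1}_{p}$- and $X^{\alpha}_{p}$-valued H\"older continuity of Theorem 2.4 through the same bounded, $t$-independent embeddings and elliptic estimates, with the restriction $0<\hat{\alpha},\hat{\beta}<1/2$ arising, exactly as in Corollary 2.1, from the embedding computation that trades spatial regularity for temporal regularity. Finally, since every conclusion is a membership in $C^{0,\gamma}((0,\infty);\cdot)$, which by the convention of \S2.1 means H\"older continuity on each bounded closed subinterval, and since each step above was performed on such a subinterval with estimates that in no way use boundedness of the time interval, the global statement follows.

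The main obstacle is not a genuinely new difficulty but the bookkeeping of the interlocking exponent conditions: one must verify that the fractional exponents can be pushed close enough to $1$ for $X^{\alpha}_{p}\hookrightarrow(C^{1,\alpha}(\overline{\Omega}))^{n}$ and $Y^{\beta}_{q}\hookrightarrow C^{1,\beta}(\overline{\Omega})$ to hold simultaneously with the Schauder regularity, while keeping the Helmholtz projection $P_{p}$ out of the H\"older scale by working with the un-projected Stokes system rather than with $A_{p}$ directly. As this is precisely the computation already carried out for Corollary 2.1, and nothing in it relies on the interval being bounded, the passage from $(0,T]$ to $(0,\infty)$ is routine once the local statement is in hand.
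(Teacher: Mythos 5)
Your top-level reduction is exactly the paper's: Corollary 2.2 is obtained by rerunning the proof of Corollary 2.1 on bounded closed subintervals of $(0,\infty)$ with Theorem 2.4 in place of Theorem 2.3, and the verification that $(2.27)$ permits $\alpha_{0}=\beta_{0}=0$ in $(2.9)$, $(2.10)$, $(2.17)$, $(2.18)$ is the same. Where you diverge is in how the regularity upgrade itself is performed. The paper stays entirely in the $L^{p}$ scale: it proves $F(u,\theta)\in C^{0,\hat{\alpha}}((0,T];(W^{1,p}(\Omega))^{n})$ by Sobolev product estimates (Lemmas 5.1--5.4 (ii) with $k=1$), obtains $u\in C^{0,\hat{\alpha}}((0,T];(W^{3,p}(\Omega))^{n})$ from $u=A^{-1}(F(u,\theta)-d_{t}u)$ together with $d_{t}u\in C^{0,\tilde{\alpha}}((0,T];X^{1/2})$ and $X^{1/2}\hookrightarrow (W^{1,p}(\Omega))^{n}$ (Lemma 5.6, whence the restriction $\hat{\alpha}<1/2$), gets $d_{t}u\in C^{0,\hat{\alpha}}((0,T];X^{1})$ by differentiating (I) in $t$ and reapplying Lemma 4.2 (Lemmas 5.7 and 5.8; this is where $f\in C^{1}$ enters, via Lemma 5.4 (ii)), and only at the very end passes to H\"older spaces through $W^{k+1,p}(\Omega)\hookrightarrow C^{k,\alpha}(\overline{\Omega})$. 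You instead move to the H\"older scale immediately, invoke boundary Schauder estimates for the stationary Stokes system and the Laplacian at each fixed $t$, and never differentiate the equation in time. That is a legitimate alternative route (it even suggests the corollary should hold with $f$ merely Lipschitz), but it imports an external ingredient the paper deliberately avoids --- Schauder theory for the Stokes system with pressure on a $C^{2,1}$ domain --- and it requires an interpolation argument you have not supplied to show that $t\mapsto f(\theta(t))$ is H\"older continuous into $C^{0,\alpha}(\overline{\Omega})$: for $f$ only in $C^{0,1}\cap C^{1}$ the composition $\theta\mapsto f(\theta)$ is not Lipschitz in the $C^{0,\alpha}$ norm, so one must interpolate between $L^{\infty}$ and a higher H\"older norm and give up part of the exponent.

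One step fails as literally written. You claim $d_{t}u(t)\in (C^{1,\alpha}(\overline{\Omega}))^{n}$ by combining Lemma 2.4 with Sobolev--Morrey, starting from $d_{t}u(t)\in X^{\alpha''}_{p}$ with $\alpha''<1$ from Theorem 2.4 (ii). But Lemma 2.4 requires $1/p-(2\alpha''-k)/n\leq 1/r\leq 1/p$, which is vacuous for $k=2$ and $\alpha''<1$; no choice of $r$ gives $X^{\alpha''}_{p}\hookrightarrow (W^{2,r}(\Omega))^{n}$, so you cannot reach $C^{1,\alpha}$ through an integer-order Sobolev space. The conclusion is still true via the H\"older part of Henry's Theorem 1.6.1 (equivalently, the Bessel-potential characterization of $D(A^{\alpha''}_{p})$), which gives $X^{\alpha''}_{p}\hookrightarrow (C^{1,\gamma}(\overline{\Omega}))^{n}$ for $\gamma<2\alpha''-1-n/p$, so that for each fixed $\alpha<1-n/p$ one may take $\alpha''$ close enough to $1$; but that embedding is not among the paper's stated lemmas and must be justified separately. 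The paper sidesteps the issue by pushing $d_{t}u$ all the way up to $X^{1}=D(A)\subset (W^{2,p}(\Omega))^{n}$ in Lemma 5.8, for which the ordinary Sobolev embedding suffices --- and that is precisely the step for which $f\in C^{1}$ and the second differentiation of the equation are needed.
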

\begin{remark}
It can be easily seen from \cite[Theorem 7.3.6]{Pazy}, \cite[Theorem 1.3]{Shibata} that our main results are still valid, instead of $(1.2)$, for the following initial-boundary data:
\begin{equation*}
\begin{cases}
u|_{t=0}=u_{0} & \mathrm{in} \ \Omega, \\
u_{\nu}|_{\partial\Omega}=0 & \mathrm{on} \ \partial\Omega\times(0,T), \\
K(T(u,p)\nu)_{\tau}+(1-K)u_{\tau}|_{\partial\Omega}=0 & \mathrm{on} \ \partial\Omega\times(0,T), \\
\theta|_{t=0}=\theta_{0} & \mathrm{in} \ \Omega, \\
\theta|_{\partial\Omega}=\theta_{s} \ \mathrm{or} \ \kappa\partial_{\nu}\theta+\kappa_{s}\theta|_{\partial\Omega}=0 & \mathrm{on} \ \partial\Omega\times(0,T),
\end{cases}
\end{equation*}
where $\nu$ is the outward unit normal vector on $\partial\Omega$, $u_{\nu}:=\nu\cdot u$, $u_{\tau}:=u-u_{\nu}\nu$, $T(u,p)$ is the Cauchy stress tensor defined as
\begin{equation*}
T(u,p)=-pI_{n}+2\mu D(u),
\end{equation*}
$I_{n}$ is the $n$-th identity matrix, $0\leq K<1$ is a constant, $\kappa_{s}$ is a positive constant.
Moreover, it is useful to remark that $(T(u,p)\nu)_{\tau}=T(u,p)\nu-(\nu\cdot T(u,p)\nu)\nu=2\mu(D(u)\nu)_{\tau}$.
\end{remark}

\subsection{$L^{p}_{\sigma}\times L^{q}$-estimates for nonlinear terms}
We will state and prove some lemmas which play an important role throughout this paper.
They allow us to obtain $L^{p}_{\sigma}$-estimates for $F(u,\theta)$ and $L^{q}$-estimates for $G(u,\theta)$.
\begin{lemma}[7, Lemma 2.2]
Let $1<p<\infty$,
\begin{equation*}
\alpha_{1}>0, \ 0\leq\delta_{1}<\frac{1}{2}+\frac{n}{2}\left(1-\frac{1}{p}\right), \ \alpha_{1}+\delta_{1}>\frac{1}{2}, \ 2\alpha_{1}+\delta_{1}\geq\frac{n}{2p}+\frac{1}{2}.
\end{equation*}
Then
\begin{equation}
\|A^{-\delta_{1}}_{p}P_{p}(u\cdot\nabla)v\|_{p}\leq C_{1}\|u\|_{X^{\alpha_{1}}_{p}}\|v\|_{X^{\alpha_{1}}_{p}}
\end{equation}
for any $u, v \in X^{\alpha_{1}}_{p}$, where $C_{1}=C_{1}(\alpha_{1},\delta_{1})$ is a positive constant.
\end{lemma}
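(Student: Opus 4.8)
The plan is to argue by duality, reducing the $L^{p}_{\sigma}$-estimate to a scalar pairing that is handled by H\"{o}lder's inequality together with the embeddings of Lemma 2.4. Since $A^{-\delta_{1}}_{p}P_{p}(u\cdot\nabla)v \in L^{p}_{\sigma}(\Omega)$, I would start from
\begin{equation*}
\|A^{-\delta_{1}}_{p}P_{p}(u\cdot\nabla)v\|_{p}=\sup\left\{|(A^{-\delta_{1}}_{p}P_{p}(u\cdot\nabla)v,\phi)| \ ; \ \phi \in L^{p'}_{\sigma}(\Omega), \ \|\phi\|_{p'}\leq 1\right\},
\end{equation*}
where $1/p+1/p'=1$ and $(\cdot,\cdot)$ is the natural pairing. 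Using that the adjoint of $A^{-\delta_{1}}_{p}$ on $L^{p}_{\sigma}(\Omega)$ is $A^{-\delta_{1}}_{p'}$ on $L^{p'}_{\sigma}(\Omega)$ and that $P_{p}$ is compatible with this duality (so $P_{p'}A^{-\delta_{1}}_{p'}\phi=A^{-\delta_{1}}_{p'}\phi$), the pairing becomes $((u\cdot\nabla)v,\psi)$ with $\psi:=A^{-\delta_{1}}_{p'}\phi$. By construction $\psi \in D(A^{\delta_{1}}_{p'})$ and $\|\psi\|_{X^{\delta_{1}}_{p'}}=\|A^{\delta_{1}}_{p'}A^{-\delta_{1}}_{p'}\phi\|_{p'}=\|\phi\|_{p'}\leq 1$, so the test function carries exactly $\delta_{1}$ worth of regularity.

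Next I would exploit the solenoidal structure: since $\mathrm{div}\,u=0$ one has $(u\cdot\nabla)v=\mathrm{div}(u\otimes v)$, so that, after integrating by parts (the boundary terms vanishing because $u$ lies in $L^{p}_{\sigma}(\Omega)$ and $\psi$ in the solenoidal space with the appropriate trace),
\begin{equation*}
((u\cdot\nabla)v,\psi)=-(u\otimes v,\nabla\psi).
\end{equation*}
I would then bound the right-hand side by H\"{o}lder's inequality, $|(u\otimes v,\nabla\psi)|\leq\|u\|_{r_{1}}\|v\|_{r_{2}}\|\nabla\psi\|_{r_{3}}$ with $1/r_{1}+1/r_{2}+1/r_{3}=1$, and control each factor through Lemma 2.4: $\|u\|_{r_{1}},\|v\|_{r_{2}}\leq C\|\cdot\|_{X^{\alpha_{1}}_{p}}$ provided $1/p-2\alpha_{1}/n\leq 1/r_{i}\leq 1/p$, and $\|\nabla\psi\|_{r_{3}}\leq C\|\psi\|_{X^{\delta_{1}}_{p'}}$ provided $1/p'-(2\delta_{1}-1)/n\leq 1/r_{3}\leq 1/p'$. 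Adding the three lower bounds and requiring that their sum not exceed $1=1/p+1/p'$ yields precisely $2\alpha_{1}+\delta_{1}\geq n/(2p)+1/2$; the upper bound $\delta_{1}<1/2+(n/2)(1-1/p)$ keeps the dual estimate away from the borderline $W^{1,\infty}$ embedding, so that $\nabla\psi$ can be placed in a finite exponent $r_{3}$ and H\"{o}lder is applied with finite exponents throughout; and $\alpha_{1}>0$ secures strict room in the embeddings for $u$ and $v$.

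The hard part will be that the single derivative produced by the divergence cannot always be placed entirely on $\psi$: the embedding $X^{\delta_{1}}_{p'}\hookrightarrow W^{1,r_{3}}$ is available only when $\delta_{1}\geq 1/2$, and, symmetrically, keeping the derivative on $v$ needs $\alpha_{1}\geq 1/2$. When neither holds, the order-one derivative must be distributed fractionally, assigning an amount $s\leq 2\delta_{1}$ to $\psi$ (whose regularity is of order $2\delta_{1}$) and the remaining $1-s\leq 2\alpha_{1}$ to $v$; such a splitting is feasible exactly when $1-2\alpha_{1}\leq 2\delta_{1}$, i.e. when $\alpha_{1}+\delta_{1}>1/2$, which is the one remaining hypothesis. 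Rigorously I expect to realize this either by complex interpolation of the bilinear map between the two endpoint estimates ($\delta_{1}=1/2$ and $\alpha_{1}=1/2$), or by invoking the boundedness of $A^{-1/2}_{p}P_{p}\,\mathrm{div}$ on $L^{p}$ and writing $A^{-\delta_{1}}_{p}P_{p}\,\mathrm{div}=A^{1/2-\delta_{1}}_{p}(A^{-1/2}_{p}P_{p}\,\mathrm{div})$ together with the moment inequality for fractional powers. Checking that this fractional redistribution is compatible with the H\"{o}lder and embedding bookkeeping above, so that the four displayed inequalities become exactly the conditions under which the estimate closes, is the technical core of the argument, and the stated bound then follows.
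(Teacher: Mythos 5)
The paper does not prove this lemma at all: it is imported verbatim as \cite[Lemma 2.2]{Giga 3}, so the only ``proof'' in the paper is the citation. Your overall strategy --- duality against $\phi\in L^{p'}_{\sigma}(\Omega)$ with test function $\psi=A^{-\delta_{1}}_{p'}\phi$, rewriting $(u\cdot\nabla)v=\mathrm{div}(u\otimes v)$ via $\mathrm{div}\,u=0$, and closing with H\"{o}lder plus the embeddings of $D(A^{\alpha}_{p})$ --- is exactly the Giga--Miyakawa argument, and your accounting of how each of the four hypotheses enters (the sum condition $2\alpha_{1}+\delta_{1}\geq n/(2p)+1/2$ from adding the lower bounds on $1/r_{i}$, the upper bound on $\delta_{1}$ guaranteeing a finite exponent $r_{3}$, and $\alpha_{1}+\delta_{1}>1/2$ governing where the derivative can sit) is correct.

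The genuine gap is the case you yourself flag: $0<\alpha_{1}<1/2$ and $0\leq\delta_{1}<1/2$ with $\alpha_{1}+\delta_{1}>1/2$, where the single derivative can be placed wholly on neither $\psi$ (this needs $\delta_{1}\geq 1/2$, since $X^{\delta_{1}}_{p'}$ cannot embed into any $W^{1,r_{3}}$ when $2\delta_{1}<1$) nor $v$ (this needs $\alpha_{1}\geq 1/2$). Neither of your two proposed fixes works as stated. The factorization $A^{-\delta_{1}}_{p}P_{p}\mathrm{div}=A^{1/2-\delta_{1}}_{p}\bigl(A^{-1/2}_{p}P_{p}\mathrm{div}\bigr)$ combined with the moment inequality is circular: the moment inequality bounds $\|A^{1/2-\delta_{1}}_{p}w\|_{p}$ only through $\|A^{\sigma}_{p}w\|_{p}$ for some $\sigma\geq 1/2-\delta_{1}$, and the natural choice $\sigma=1/2$ returns $\|P_{p}(u\cdot\nabla)v\|_{p}$, whose estimate is precisely the $\delta_{1}=0$ endpoint requiring $\alpha_{1}>1/2$ --- the case where no interpolation was needed. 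The bilinear complex interpolation route requires identifying $[D(A^{\gamma}_{p}),D(A^{\gamma'}_{p})]_{\theta}$ with $D(A^{(1-\theta)\gamma+\theta\gamma'}_{p})$ (i.e.\ bounded imaginary powers of the Stokes operator) together with a fractional product rule of the form $\|u\otimes v\|_{H^{1-2\delta_{1},r}}\leq C\|u\|_{X^{\alpha_{1}}_{p}}\|v\|_{X^{\alpha_{1}}_{p}}$; neither ingredient is set up, and the second is where the condition $\alpha_{1}+\delta_{1}>1/2$ actually does its work. Since the paper's choice of exponents in subsection 2.6 does use $\delta_{1}>0$ small with $\alpha_{1}$ possibly below $1/2$, this intermediate case is not vacuous for the application, so the proposal as written does not yet establish the lemma in the full stated range.
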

\begin{lemma}[9, Lemma 3.3]
Let $1<p<\infty$, $1<q<\infty$,
\begin{equation*}
\begin{split}
&\alpha_{2}, \beta_{2}\geq 0, \ \alpha_{2}>\frac{n}{2}\left(\frac{1}{p}-\frac{1}{q}\right), \ 0\leq\delta_{2}<\frac{1}{2}+\frac{n}{2}\left(1-\frac{1}{q}\right), \ \beta_{2}+\delta_{2}>\frac{1}{2}, \\
&\alpha_{2}+\beta_{2}+\delta_{2}\geq\frac{n}{2p}+\frac{1}{2}.
\end{split}
\end{equation*}
Then
\begin{equation}
\|B^{-\delta_{2}}_{q}(u\cdot\nabla)\theta\|_{q}\leq C_{2}\|u\|_{X^{\alpha_{2}}_{p}}\|\theta\|_{Y^{\beta_{2}}_{q}}
\end{equation}
for any $u \in X^{\alpha_{2}}_{p}$, $\theta \in Y^{\beta_{2}}_{q}$,where $C_{2}=C_{2}(\alpha_{2},\beta_{2},\delta_{2})$ is a positive constant.
\end{lemma}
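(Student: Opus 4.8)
The plan is to argue by duality, reducing the claimed $L^{q}$-bound to an estimate for a trilinear form, and then to distribute the single spatial derivative in $(u\cdot\nabla)\theta$ between $\theta$ and the dual test function before invoking H\"older's inequality together with the embeddings of Lemma 2.4. Since $(L^{q}(\Omega))^{*}=L^{q'}(\Omega)$ with $1/q+1/q'=1$, and since the adjoint of the Dirichlet Laplacian $B_{q}$ is $B_{q'}$ (so that $(B^{-\delta_{2}}_{q})^{*}=B^{-\delta_{2}}_{q'}$), I would write
$$\|B^{-\delta_{2}}_{q}(u\cdot\nabla)\theta\|_{q}=\sup\left\{|\langle(u\cdot\nabla)\theta,\psi\rangle| \ ; \ \psi=B^{-\delta_{2}}_{q'}\phi, \ \|\phi\|_{q'}=1\right\},$$
where $\|\psi\|_{Y^{\delta_{2}}_{q'}}=\|B^{\delta_{2}}_{q'}\psi\|_{q'}=\|\phi\|_{q'}=1$. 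Using $\mathrm{div}\,u=0$ and $u|_{\partial\Omega}=0$, an integration by parts turns $\langle(u\cdot\nabla)\theta,\psi\rangle$ into $-\int_{\Omega}\theta\,(u\cdot\nabla)\psi\,dx$ with no boundary contribution, so it suffices to bound this trilinear form by $C\|u\|_{X^{\alpha_{2}}_{p}}\|\theta\|_{Y^{\beta_{2}}_{q}}\|\psi\|_{Y^{\delta_{2}}_{q'}}$.

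The key point is that the single derivative to be placed on the factors (equivalently, encoded by the half-power $B^{1/2}\sim\nabla$) can be split between $\theta$ and $\psi$: the function $\theta\in Y^{\beta_{2}}_{q}$ carries smoothness of order $2\beta_{2}$ and $\psi\in Y^{\delta_{2}}_{q'}$ of order $2\delta_{2}$, so a single derivative can be absorbed precisely when $2\beta_{2}+2\delta_{2}>1$, i.e. when $\beta_{2}+\delta_{2}>1/2$. Carrying this out, I would apply H\"older's inequality with three exponents $r,s,m$ satisfying $1/r+1/s+1/m=1$ and control each factor by Lemma 2.4: the $\theta$-factor by $Y^{\beta_{2}}_{q}\hookrightarrow W^{k_{1},r}(\Omega)$, the $u$-factor by $X^{\alpha_{2}}_{p}\hookrightarrow(W^{0,s}(\Omega))^{n}$, and the $\psi$-factor by $Y^{\delta_{2}}_{q'}\hookrightarrow W^{k_{2},m}(\Omega)$, where the integers $k_{1}+k_{2}=1$ record how the derivative is distributed.

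The main obstacle, and the only genuine content, is the exponent bookkeeping: one must exhibit a single admissible triple $(r,s,m)$ for which all three embeddings of Lemma 2.4 hold at once while $1/r+1/s+1/m=1$. Adding the lower endpoints permitted by Lemma 2.4 shows that the smallest attainable value of $1/r+1/s+1/m$ is
$$1+\frac{1}{p}-\frac{2(\alpha_{2}+\beta_{2}+\delta_{2})-1}{n},$$
which is independent of the split $k_{1}+k_{2}=1$ and is $\leq 1$ precisely because $\alpha_{2}+\beta_{2}+\delta_{2}\geq\frac{n}{2p}+\frac{1}{2}$, while the largest attainable value is $1+1/p\geq 1$; hence the constraint $1/r+1/s+1/m=1$ can be met by the intermediate value theorem. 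The remaining hypotheses secure strict feasibility with $r,s,m\in(1,\infty)$: $\beta_{2}+\delta_{2}>1/2$ makes the derivative-splitting range nonempty, $\delta_{2}<\frac{1}{2}+\frac{n}{2}(1-\frac{1}{q})$ keeps the order-one dual exponent finite, and $\alpha_{2}>\frac{n}{2}(\frac{1}{p}-\frac{1}{q})$ ensures $u$ can be placed in a space compatible with the other two factors. Tracking the embedding constants through Lemma 2.4 then yields the assertion with $C_{2}=C_{2}(\alpha_{2},\beta_{2},\delta_{2})$.
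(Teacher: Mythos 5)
First, note that the paper offers no proof of this lemma at all: it is quoted verbatim from Hishida \cite[Lemma 3.3]{Hishida}, so there is no in-paper argument to compare against. Your overall strategy --- duality against $\psi=B^{-\delta_{2}}_{q'}\phi$, integration by parts via $\mathrm{div}\,u=0$, and then H\"older plus the embeddings of Lemma 2.4 --- is indeed the standard route taken in the cited sources, and your bookkeeping identity for the sum of the lower endpoints, $1+\tfrac{1}{p}-\tfrac{2(\alpha_{2}+\beta_{2}+\delta_{2})-1}{n}$, is correct.

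There is, however, a genuine gap in the step where you ``split the derivative'' as $k_{1}+k_{2}=1$ with $k_{1},k_{2}$ integers. The embedding $Y^{\beta_{2}}_{q}\hookrightarrow W^{1,r}(\Omega)$ supplied by Lemma 2.4 requires the interval $\tfrac{1}{q}-\tfrac{2\beta_{2}-1}{n}\leq\tfrac{1}{r}\leq\tfrac{1}{q}$ to be nonempty, i.e.\ $\beta_{2}\geq\tfrac{1}{2}$; likewise $Y^{\delta_{2}}_{q'}\hookrightarrow W^{1,m}(\Omega)$ requires $\delta_{2}\geq\tfrac{1}{2}$ (one cannot gain a full derivative by lowering integrability on a bounded domain). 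So your two admissible splits cover only the cases $\beta_{2}\geq\tfrac12$ or $\delta_{2}\geq\tfrac12$, whereas the hypothesis $\beta_{2}+\delta_{2}>\tfrac12$ permits, say, $\beta_{2}=\delta_{2}=0.3$ (e.g.\ $n=p=q=2$, $\alpha_{2}=0.4$ satisfies every stated condition), in which case neither placement of the derivative is admissible. Your observation that the minimal value of $\tfrac1r+\tfrac1s+\tfrac1m$ is ``independent of the split'' is arithmetically true but conceals exactly this: for a fixed split one of the three intervals may be empty, so the intermediate value argument has nothing to select from. The genuinely fractional regime $\beta_{2},\delta_{2}<\tfrac12$, $\beta_{2}+\delta_{2}>\tfrac12$ is the real content of the hypothesis $\beta_{2}+\delta_{2}>\tfrac12$, and it cannot be reached by integer Sobolev splitting; one needs an additional device, e.g.\ bilinear complex interpolation between the endpoint cases $(\beta_{2},\delta_{2})=(\tfrac12+\epsilon,0)$ and $(0,\tfrac12+\epsilon)$ using boundedness of the imaginary powers of $B_{q}$, or the mapping properties of operators of the type $B^{-\delta}_{q}\,\mathrm{div}$ as in \cite[Lemma 2.1]{Giga 3}, which is how the cited references handle it. You should either supply such an argument or restrict your proof to the subcase $\max\{\beta_{2},\delta_{2}\}\geq\tfrac12$ and say so.
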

\begin{lemma}
Let $1<p<\infty$, $1<q<\infty$,
\begin{equation*}
\max\left\{0, \frac{1}{2}+\frac{n}{2}\left(\frac{1}{p}-\frac{1}{2q}\right)\right\}\leq\alpha_{2}\leq 1.
\end{equation*}
Then
\begin{equation}
\|\Phi(u,v)\|_{q}\leq C_{3}\|u\|_{X^{\alpha_{2}}_{p}}\|v\|_{X^{\alpha_{2}}_{p}}
\end{equation}
for any $u, v \in X^{\alpha_{2}}_{p}$, where $C_{3}=C_{3}(\alpha_{2})$ is a positive constant.
\end{lemma}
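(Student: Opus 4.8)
The plan is to reduce the claimed bound to a pointwise product of first derivatives and then to quote the Sobolev-type embedding of Lemma 2.4. First I would record the elementary pointwise estimate: since $\mu=1$ we have $\Phi(u,v)=2\,D(u):D(v)$, and the Frobenius inequality together with $|D(w)|\leq|\nabla w|$ gives
\[
|\Phi(u,v)|\leq C|\nabla u|\,|\nabla v|,
\]
with $C=C(n)$. Splitting the exponent as $\tfrac1q=\tfrac1{2q}+\tfrac1{2q}$ and applying H\"older's inequality then yields
\[
\|\Phi(u,v)\|_{q}\leq C\big\||\nabla u|\,|\nabla v|\big\|_{q}\leq C\|\nabla u\|_{2q}\|\nabla v\|_{2q}.
\]
Thus the whole estimate is reduced to controlling $\|\nabla w\|_{2q}$ by $\|w\|_{X^{\alpha_{2}}_{p}}$ for $w\in\{u,v\}$, which is exactly a statement about the smoothing afforded by $A^{\alpha_{2}}_{p}$.

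Next I would invoke Lemma 2.4 with $k=1$ and $r=2q$, which gives $X^{\alpha_{2}}_{p}\hookrightarrow(W^{1,2q}(\Omega))^{n}$ and hence $\|\nabla w\|_{2q}\leq\|w\|_{1,2q}\leq C\|w\|_{X^{\alpha_{2}}_{p}}$, provided the admissibility condition
\[
\frac1p-\frac{2\alpha_{2}-1}{n}\leq\frac{1}{2q}\leq\frac1p
\]
holds. The left inequality rearranges precisely to $\alpha_{2}\geq\tfrac12+\tfrac n2\!\left(\tfrac1p-\tfrac1{2q}\right)$, which is exactly the lower bound on $\alpha_{2}$ assumed in the statement; in other words the hypothesis on $\alpha_{2}$ is tailored to make this single embedding available, and the upper bound $\alpha_{2}\leq1$ keeps $w$ in the range where the fractional power and the embedding constant are well behaved. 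Multiplying the two gradient bounds and collecting constants then produces the asserted inequality, with $C_{3}=C_{3}(\alpha_{2})$ absorbing the pointwise constant, the H\"older constant, and the two embedding constants from Lemma 2.4.

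The one point that I expect to require genuine care — and hence the \textbf{main obstacle} — is the upper half $\tfrac1{2q}\leq\tfrac1p$ of the admissibility condition, i.e.\ that $r=2q$ actually falls inside the admissible window of Lemma 2.4, which only furnishes a gain of integrability ($r\geq p$). When $p\leq 2q$ this is immediate, and one checks that the lower bound on $\alpha_{2}$ then forces $\alpha_{2}\geq\tfrac12$, precisely the threshold needed for $\nabla w$ to be a bona fide $L^{2q}$ function rather than a distribution. In the complementary situation one should not insist on the symmetric exponent $2q$: instead I would split with H\"older exponents $r_{1},r_{2}$ chosen inside the admissible window $\big[\tfrac1p-\tfrac{2\alpha_{2}-1}{n},\,\tfrac1p\big]$ subject to $\tfrac1{r_1}+\tfrac1{r_2}=\tfrac1q$, embed each factor by Lemma 2.4, and use the boundedness of $\Omega$ to pass freely to any lower integrability if needed. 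The feasibility of this choice is governed exactly by the same bound $\alpha_{2}\geq\tfrac12+\tfrac n2(\tfrac1p-\tfrac1{2q})$, so verifying that the prescribed window for $\alpha_{2}$ matches the Sobolev condition is the real content; the pointwise and H\"older steps are routine.
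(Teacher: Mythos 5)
Your argument is exactly the paper's proof: the published proof consists of one line, namely ``apply the Schwarz inequality to $\|\Phi(u,v)\|_{q}$ and then use (2.7) with $\alpha=\alpha_{2}$, $k=1$, $r=2q$'', which is precisely your pointwise bound, your H\"older split $\tfrac1q=\tfrac1{2q}+\tfrac1{2q}$, and your appeal to Lemma~2.4. Your closing discussion of the case $2q<p$, where $r=2q$ lies outside the admissible window $\tfrac1r\leq\tfrac1p$ of Lemma~2.4, raises a point the paper passes over in silence, so it is a welcome (if not strictly necessary for matching the paper's argument) addition rather than a deviation.
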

\begin{proof}
After applying the Schwarz inequality to $\|\Phi(u,v)\|_{q}$, we can obtain (2.30) by (2.7) with $\alpha=\alpha_{2}$, $k=1$ and $r=2q$.
\end{proof}
\begin{lemma}
Let $f \in C^{0,1}(\mathbb{R};\mathbb{R}^{n})$ with the Lipschitz constant $L_{f}$, $f(0)=0$, $1<p<\infty$, $1<q<\infty$,
\begin{equation*}
\max\left\{0, \ \frac{n}{2}\left(\frac{1}{q}-\frac{1}{p}\right) \right\}\leq\beta_{1}\leq 1.
\end{equation*}
Then
\begin{equation}
\|P_{p}f(\theta)\|_{p}\leq C_{4}L_{f}\|\theta\|_{Y^{\beta_{1}}_{q}}
\end{equation}
for any $\theta \in Y^{\beta_{1}}_{q}$, where $C_{4}=C_{4}(\beta_{1})$ is a positive constant.
\end{lemma}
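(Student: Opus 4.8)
The plan is to split the estimate into two elementary reductions followed by one Sobolev-type embedding. First, since $P_{p}$ is a bounded projection of $(L^{p}(\Omega))^{n}$ onto $L^{p}_{\sigma}(\Omega)$, one has $\|P_{p}f(\theta)\|_{p}\leq\|P_{p}\|\,\|f(\theta)\|_{p}$, so it suffices to control $\|f(\theta)\|_{p}$. Using $f(0)=0$ together with the Lipschitz continuity of $f$, I would write $|f(\theta(x))|=|f(\theta(x))-f(0)|\leq L_{f}|\theta(x)|$ for almost every $x\in\Omega$, which upon integration over $\Omega$ yields $\|f(\theta)\|_{p}\leq L_{f}\|\theta\|_{p}$. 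Hence the whole assertion reduces to the scalar inequality $\|\theta\|_{p}\leq C\|\theta\|_{Y^{\beta_{1}}_{q}}$ with a constant $C=C(\beta_{1})$.

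For this last inequality I would invoke the embedding $(2.8)$ of Lemma $2.1$ with $k=0$ and $r=p$, whose hypothesis reads $\frac{1}{q}-\frac{2\beta_{1}}{n}\leq\frac{1}{p}\leq\frac{1}{q}$. When $p\geq q$ the upper bound $\frac{1}{p}\leq\frac{1}{q}$ holds automatically, and the lower bound is exactly $\beta_{1}\geq\frac{n}{2}\left(\frac{1}{q}-\frac{1}{p}\right)$, which is guaranteed by the assumption $\beta_{1}\geq\max\{0,\frac{n}{2}(\frac{1}{q}-\frac{1}{p})\}$; thus $Y^{\beta_{1}}_{q}\hookrightarrow L^{p}(\Omega)$ directly and $\|\theta\|_{p}\leq C\|\theta\|_{Y^{\beta_{1}}_{q}}$.

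The step I expect to require the most care is the complementary range $p<q$, since then $\frac{1}{p}>\frac{1}{q}$ and $(2.8)$ no longer applies with $r=p$ (that estimate gains integrability and hence needs $r\geq q$). Here I would instead exploit that $\Omega$ is bounded: by H\"{o}lder's inequality $\|\theta\|_{p}\leq|\Omega|^{1/p-1/q}\|\theta\|_{q}$, and then $\|\theta\|_{q}\leq C\|\theta\|_{Y^{\beta_{1}}_{q}}$ follows either from $(2.8)$ with $r=q$ (which only requires $\beta_{1}\geq 0$) or directly from the boundedness of $B^{-\beta_{1}}_{q}$ on $L^{q}(\Omega)$. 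In this range the hypothesis forces $\beta_{1}\geq 0$, because $\frac{n}{2}(\frac{1}{q}-\frac{1}{p})<0$, so no additional constraint is needed. Combining the two ranges gives $\|\theta\|_{p}\leq C\|\theta\|_{Y^{\beta_{1}}_{q}}$ in every case, and tracing the constants back through the two reductions yields $(2.31)$ with $C_{4}=C_{4}(\beta_{1})$.
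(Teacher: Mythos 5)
Your proof is correct and follows essentially the same route as the paper: boundedness of $P_{p}$, the pointwise Lipschitz bound $\|f(\theta)\|_{p}\leq L_{f}\|\theta\|_{p}$, and the embedding $(2.8)$ with $k=0$, $r=p$. The only difference is that you treat the case $p<q$ separately via H\"{o}lder's inequality on the bounded domain, which is slightly more careful than the paper's one-line appeal to $(2.8)$, whose stated hypothesis $\frac{1}{p}\leq\frac{1}{q}$ formally excludes that case.
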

\begin{proof}
It is known in \cite[Theorem 1]{Fujiwara} that $P_{p}$ is a bounded operator in $(L^{p}(\Omega))^{n}$.
Since $\|f(\theta)\|_{p}\leq L_{f}\|\theta\|_{p}$, (2.31) follows from (2.8) with $\beta=\beta_{1}$, $k=0$ and $r=p$.
\end{proof}

\subsection{$X^{\alpha}_{p}\times Y^{\beta}_{q}$-estimates for nonlinear terms}
First, we will fix four exponents $\alpha_{1}$, $\alpha_{2}$, $\beta_{1}$ and $\beta_{2}$ in Lemmas 2.5--2.8 after the choice of two exponents $\delta_{1}$ in Lemma 2.5 and $\delta_{2}$ in Lemma 2.6.
We take $\delta_{1}$ ($\delta_{2}$) as zero in the case where $\alpha_{0}>0$ ($\beta_{0}>0$), and as an arbitrary positive constant in the case where $\alpha_{0}=0$ ($\beta_{0}=0$).
It is essential for (2.9), (2.10) that we make an appropriate choice of $\alpha_{1}$ in Lemma 2.5, $\alpha_{2}$ in Lemmas 2.6 and 2.7, $\beta_{1}$ in Lemma 2.8 and $\beta_{2}$ in Lemma 2.6.
Some elementary demonstrations admit that we can chose $\alpha_{0}<\alpha_{1}<1-\delta_{1}$, $\alpha_{0}<\alpha_{2}<1-\delta_{1}$, $\beta_{0}<\beta_{1}<1-\delta_{2}$ and $\beta_{0}<\beta_{2}<1-\delta_{2}$ which satisfy not only assumptions for Lemmas 2.5--2.8 but also
\begin{equation}
2\alpha_{1}+\delta_{1}\leq 1+\alpha_{0}, \ \alpha_{2}+\beta_{2}+\delta_{2}\leq 1+\alpha_{0}, \ \alpha_{2}\leq \alpha_{0}+\frac{1-\beta_{0}}{2},
\end{equation}
\begin{equation}
\begin{cases}
\beta_{1}<1-\alpha_{0}+\beta_{0} & \mathrm{if} \ \left|\alpha_{0}-\beta_{0}-\dfrac{n}{2}\left(\dfrac{1}{p}-\dfrac{1}{q}\right)\right|<1, \\
\beta_{1}=1-\alpha_{0}+\beta_{0} & \mathrm{if} \ \alpha_{0}-\beta_{0}-\dfrac{n}{2}\left(\dfrac{1}{p}-\dfrac{1}{q}\right)=1.
\end{cases}
\end{equation}
These exponents are fixed throughout this paper.

Second, we obtain $X^{\alpha}_{p}\times Y^{\beta}_{q}$-estimates for nonlinear terms which appeared in (II).
Let $0\leq \alpha<1-\delta_{1}$, $0\leq \beta<1-\delta_{2}$, $0<\lambda<\Lambda_{1}$, and set
\begin{equation*}
\mathcal{F}(u,\theta)(t)=\int^{t}_{0}e^{-(t-s)A_{p}}F(u,\theta)(s)ds,
\end{equation*}
\begin{equation*}
\mathcal{G}(u,\theta)(t)=\int^{t}_{0}e^{-(t-s)B_{q}}G(u,\theta)(s)ds.
\end{equation*}
Then $\|\mathcal{F}(u,\theta)(t)\|_{X^{\alpha}_{p}}$ and $\|\mathcal{G}(u,\theta)(t)\|_{Y^{\beta}_{q}}$ are bounded as follows:
\begin{equation}
\begin{split}
\|\mathcal{F}(u,\theta)&(t)\|_{X^{\alpha}_{p}} \\
\leq& C_{A_{p},\alpha+\delta_{1},\lambda}C_{1}\int^{t}_{0}(t-s)^{-(\alpha+\delta_{1})}e^{-\lambda(t-s)}\|u(s)\|^{2}_{X^{\alpha_{1}}_{p}}ds \\
&+C_{A_{p},\alpha,\lambda}C_{4}L_{f}\int^{t}_{0}(t-s)^{-\alpha}e^{-\lambda(t-s)}\|\theta(s)\|_{Y^{\beta_{1}}_{q}}ds,
\end{split}
\end{equation}
\begin{equation}
\begin{split}
\|\mathcal{G}(u,\theta)&(t)\|_{Y^{\beta}_{q}} \\
\leq& C_{B_{q},\beta+\delta_{2},\lambda}C_{2}\int^{t}_{0}(t-s)^{-(\beta+\delta_{2})}e^{-\lambda(t-s)}\|u(s)\|_{X^{\alpha_{2}}_{p}}\|\theta(s)\|_{Y^{\beta_{2}}_{q}}ds \\
&+C_{B_{q},\beta,\lambda}C_{3}\int^{t}_{0}(t-s)^{-\beta}e^{-\lambda(t-s)}\|u(s)\|^{2}_{X^{\alpha_{2}}_{p}}ds
\end{split}
\end{equation}
for any $0\leq t\leq T$.
Let $(u_{1},\theta_{1})$ and $(u_{2},\theta_{2})$ be two mild solutions of (1.1), (1.2).
Then we have the following inequalities:
\begin{equation}
\begin{split}
\|\mathcal{F}(u_{2},\theta_{2})&(t)-\mathcal{F}(u_{1},\theta_{1})(t)\|_{X^{\alpha}_{p}} \\
\leq& C_{A_{p},\alpha+\delta_{1},\lambda}C_{1}\int^{t}_{0}(t-s)^{-(\alpha+\delta_{1})}e^{-\lambda(t-s)}(\|u_{1}(s)\|_{X^{\alpha_{1}}_{p}}+\|u_{2}(s)\|_{X^{\alpha_{1}}_{p}}) \\
&\times \|(u_{2}-u_{1})(s)\|_{X^{\alpha_{1}}_{p}}ds \\
&+C_{A_{p},\alpha,\lambda}C_{4}L_{f}\int^{t}_{0}(t-s)^{-\alpha}e^{-\lambda(t-s)}\|(\theta_{2}-\theta_{1})(s)\|_{Y^{\beta_{1}}_{q}}ds,
\end{split}
\end{equation}
\begin{equation}
\begin{split}
\|\mathcal{G}(u_{2},\theta_{2})&(t)-\mathcal{G}(u_{1},\theta_{1})(t)\|_{Y^{\beta}_{q}} \\
\leq& C_{B_{q},\beta+\delta_{2},\lambda}C_{2}\int^{t}_{0}(t-s)^{-(\beta+\delta_{2})}e^{-\lambda(t-s)}\|(u_{2}-u_{1})(s)\|_{X^{\alpha_{2}}_{p}} \\
&\times\|\theta_{2}(s)\|_{Y^{\beta_{2}}_{q}}ds \\
&+C_{B_{q},\beta+\delta_{2},\lambda}C_{2}\int^{t}_{0}(t-s)^{-(\beta+\delta_{2})}e^{-\lambda(t-s)}\|u_{1}(s)\|_{X^{\alpha_{2}}_{p}} \\
&\times\|(\theta_{2}-\theta_{1})(s)\|_{Y^{\beta_{2}}_{q}}ds \\
&+C_{B_{q},\beta,\lambda}C_{3}\int^{t}_{0}(t-s)^{-\beta}e^{-\lambda(t-s)}(\|u_{1}(s)\|_{X^{\alpha_{2}}_{p}}+\|u_{2}(s)\|_{X^{\alpha_{2}}_{p}}) \\
&\times \|(u_{2}-u_{1})(s)\|_{X^{\alpha_{2}}_{p}}ds
\end{split}
\end{equation}
for any $0\leq t\leq T$.

\section{Proof of Theorems 2.1 and 2.2}
We will prove Theorems 2.1 and 2.2 in this section.
In proving our main results, simplified notation is given as follows: We drop two subscripts $p$ and $q$ attached to $P$, $A$, $B$, $X^{\alpha}$ and $Y^{\beta}$ in the sequel.
It is useful to remark that a generic positive constant independent of $u$, $\theta$ and $t$ is simply denoted by $C$.

\subsection{Existence of local mild solutions}
We construct a mild solution $(u,\theta)$ of (1.1), (1.2) by the following successive approximation $(u^{m},\theta^{m})$ $(m \in \mathbb{Z}, \ m\geq 0)$:
\begin{equation}
\begin{cases}
u^{0}(t)=e^{-tA}u_{0}, \\
\theta^{0}(t)=e^{-tB}\theta_{0},
\end{cases}
\end{equation}
\begin{equation}
\begin{cases}
u^{m+1}=u^{0}+\mathcal{F}(u^{m},\theta^{m}), \\
\theta^{m+1}=\theta^{0}+\mathcal{G}(u^{m},\theta^{m}).
\end{cases}
\end{equation}
It is assured by the following lemma that $\{t^{\alpha-\alpha_{0}}u^{m}\}_{m}$ and $\{t^{\beta-\beta_{0}}\theta^{m}\}_{m}$ are well-defined as sequences in $C([0,T];X^{\alpha})$ for $\alpha=\alpha_{1}, \alpha_{2}$ and in $C([0,T];Y^{\beta})$ for $\beta=\beta_{1}, \beta_{2}$ respectively.
\begin{lemma}
Let $\alpha=\alpha_{1}, \alpha_{2}$, $\beta=\beta_{1}, \beta_{2}$.
Then there exist monotone increasing continuous functions $K^{m}_{1,\alpha}$ and $K^{m}_{2,\beta}$ on $[0,T]$ for any $m \in \mathbb{Z}$, $m\geq 0$ such that $K^{m}_{1,\alpha}(0)=0$, $K^{m}_{2,\beta}(0)=0$,
\begin{equation}
\|u^{m}(t)\|_{X^{\alpha}}\leq K^{m}_{1,\alpha}(t)t^{\alpha_{0}-\alpha},
\end{equation}
\begin{equation}
\|\theta^{m}(t)\|_{Y^{\beta}}\leq K^{m}_{2,\beta}(t)t^{\beta_{0}-\beta}
\end{equation}
for any $0<t\leq T$, $K^{m}_{1,\alpha}\leq K^{m+1}_{1,\alpha}$, $K^{m}_{2,\beta}\leq K^{m+1}_{2,\beta}$ on $[0,T]$.
\end{lemma}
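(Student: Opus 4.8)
The plan is to prove (3.3), (3.4) together with the monotonicity $K^m\le K^{m+1}$ by induction on $m$, using the convolution bounds (2.34), (2.35) as the engine and the exponent relations (2.32), (2.33) to keep every power of $t$ nonnegative. Throughout, the fact that each $K^m$ is monotone increasing is exploited to replace $K^m(s)$ by $K^m(t)$ for $0\le s\le t$ and thereby pull it outside the time integrals; after that the remaining integrals are of Beta type and are evaluated explicitly (bounding $e^{-\lambda(t-s)}\le 1$).

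For the base case $m=0$ I would not use the decay bounds (2.1), (2.2), since these only give a constant multiple of $t^{\alpha_0-\alpha}$ and hence a $K^0$ that need not vanish at $0$; instead I would use the refined little-$o$ estimate of Lemma 2.3. Because $u_0\in X^{\alpha_0}$ means $A^{\alpha_0}u_0\in L^p_\sigma(\Omega)$, applying (2.5) to $A^{\alpha_0}u_0$ with exponent $\alpha-\alpha_0\in(0,1)$ gives $\|u^0(t)\|_{X^\alpha}=\|e^{-tA}(A^{\alpha_0}u_0)\|_{X^{\alpha-\alpha_0}}=o(t^{-(\alpha-\alpha_0)})$ as $t\rightarrow +0$, so that $t^{\alpha-\alpha_0}\|u^0(t)\|_{X^\alpha}\rightarrow 0$; the analogue for $\theta^0$ follows from (2.6). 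I then set $K^0_{1,\alpha}(t):=\sup_{0<s\le t}s^{\alpha-\alpha_0}\|u^0(s)\|_{X^\alpha}$ (and $K^0_{1,\alpha}(0):=0$), and likewise $K^0_{2,\beta}$. These running suprema are finite, continuous and monotone increasing, and they vanish at $t=0$ precisely because of the little-$o$ decay.

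For the inductive step I would insert (3.3), (3.4) at level $m$ into (2.34): the first integrand is bounded using $\|u^m(s)\|^2_{X^{\alpha_1}}\le (K^m_{1,\alpha_1}(t))^2 s^{2(\alpha_0-\alpha_1)}$ and the second using $\|\theta^m(s)\|_{Y^{\beta_1}}\le K^m_{2,\beta_1}(t)s^{\beta_0-\beta_1}$. Evaluating the resulting Beta integrals leaves powers $t^{1-\alpha-\delta_1+2(\alpha_0-\alpha_1)}$ and $t^{1-\alpha+\beta_0-\beta_1}$; factoring out $t^{\alpha_0-\alpha}$ produces the residual exponents $1-\delta_1+\alpha_0-2\alpha_1$ and $1-\alpha_0+\beta_0-\beta_1$, which are nonnegative exactly by $2\alpha_1+\delta_1\le 1+\alpha_0$ in (2.32) and $\beta_1\le 1-\alpha_0+\beta_0$ in (2.33). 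Thus $\|u^{m+1}(t)\|_{X^\alpha}\le [K^0_{1,\alpha}(t)+C(K^m_{1,\alpha_1}(t))^2 t^{\eta_1}+C K^m_{2,\beta_1}(t)t^{\eta_2}]\,t^{\alpha_0-\alpha}$ with $\eta_1,\eta_2\ge 0$, and I define $K^{m+1}_{1,\alpha}$ as the bracketed factor. The same computation applied to (2.35), where (2.32) forces both leftover exponents ($1+\alpha_0-(\alpha_2+\beta_2+\delta_2)$ and $1-\beta_0-2(\alpha_2-\alpha_0)$) to be nonnegative, defines $K^{m+1}_{2,\beta}$. Each such $K^{m+1}$ is continuous, monotone increasing, and vanishes at $0$ (from $K^0(0)=0$ together with either a positive power of $t$ or the vanishing of the relevant $K^m$ at $0$).

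Finally, the monotonicity $K^m\le K^{m+1}$ in $m$ is essentially automatic from the shape of the recursion: each $K^{m+1}$ is $K^0$ plus a sum, with nonnegative coefficients and nonnegative powers of $t$, of monomials in the four quantities $K^m_{1,\alpha_1}, K^m_{1,\alpha_2}, K^m_{2,\beta_1}, K^m_{2,\beta_2}$. Hence the map $\mathbf{K}^m\mapsto\mathbf{K}^{m+1}$ sending the level-$m$ data to the level-$(m+1)$ data is order preserving; since $K^1\ge K^0$ (one only adds nonnegative terms), induction gives $K^m\le K^{m+1}$ for all $m$. I expect the main obstacle to be the exponent bookkeeping in the inductive step: one must check not only that each Beta integral converges near $s=0$ (which needs the $s$-exponents to exceed $-1$, i.e. $\alpha_1,\alpha_2<\alpha_0+\tfrac12$ and the analogous bounds, built into the choice of the fixed exponents) but, more importantly, that after the factorization every residual power of $t$ is nonnegative — and it is precisely here that (2.32) and (2.33) are tailored to fit.
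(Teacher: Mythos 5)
Your proposal is correct and follows essentially the same route as the paper: the base case defines $K^{0}_{1,\alpha}$, $K^{0}_{2,\beta}$ as running suprema whose vanishing at $t=0$ comes from the little-$o$ estimates (2.5), (2.6) of Lemma 2.3, and the inductive step feeds (3.3), (3.4) into (2.34), (2.35), evaluates the Beta integrals, and uses (2.32), (2.33) to see that every residual power of $t$ after factoring out $t^{\alpha_{0}-\alpha}$ (resp.\ $t^{\beta_{0}-\beta}$) is nonnegative, exactly reproducing the recursion (3.7), (3.8). Your remarks on the order-preserving character of the recursion and on the case where a residual exponent equals zero (so that $K^{m+1}(0)=0$ must come from $K^{m}(0)=0$) match the paper's use of the inductive hypothesis.
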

\begin{proof}
We give the inductive definition of $K^{m}_{1,\alpha}$ and $K^{m}_{2,\beta}$ with respect to $m$.
$K^{0}_{1,\alpha}$ and $K^{0}_{2,\beta}$ are defined as
\begin{equation}
K^{0}_{1,\alpha}(t)=\sup_{0<s\leq t}s^{\alpha-\alpha_{0}}\|u^{0}(s)\|_{X^{\alpha}},
\end{equation}
\begin{equation}
K^{0}_{2,\beta}(t)=\sup_{0<s\leq t}s^{\beta-\beta_{0}}\|\theta^{0}(s)\|_{Y^{\beta}}.
\end{equation}
It is obvious from (3.5), (3.6) that (3.3), (3.4) with $m=0$ hold for any $0<t\leq T$.
Moreover, (2.5), (2.6) yield that $K^{0}_{1,\alpha}(0)=0$, $K^{0}_{2,\beta}(0)=0$.
Assume that there exist $K^{m}_{1,\alpha}$ and $K^{m}_{2,\beta}$ for some $m \in \mathbb{Z}$, $m\geq 0$.
After applying (2.34), (2.35) to $(u^{m},\theta^{m})$, it is derived from (3.2) that we have the following inequalities:
\begin{equation*}
\begin{split}
\|u^{m+1}(t)\|_{X^{\alpha}}\leq& K^{0}_{1,\alpha}(t)t^{\alpha_{0}-\alpha} \\
&+C_{A,\alpha+\delta_{1},\lambda}C_{1}B(1-(\alpha+\delta_{1}),1+2(\alpha_{0}-\alpha_{1})) \\
&\times K^{m}_{1,\alpha_{1}}(t)^{2}t^{1+2\alpha_{0}-\alpha-2\alpha_{1}-\delta_{1}} \\
&+C_{A,\alpha,\lambda}C_{4}L_{f}B(1-\alpha,1+\beta_{0}-\beta_{1})K^{m}_{2,\beta_{1}}(t)t^{1+\beta_{0}-\alpha-\beta_{1}},
\end{split}
\end{equation*}
\begin{equation*}
\begin{split}
\|\theta^{m+1}(t)\|_{Y^{\beta}}\leq& K^{0}_{2,\beta}(t)t^{\beta_{0}-\beta} \\
&+C_{B,\beta+\delta_{2},\lambda}C_{2}B(1-(\beta+\delta_{2}),1+\alpha_{0}+\beta_{0}-\alpha_{2}-\beta_{2}) \\
&\times K^{m}_{1,\alpha_{2}}(t)K^{m}_{2,\beta_{2}}(t)t^{1+\alpha_{0}+\beta_{0}-\beta-\alpha_{2}-\beta_{2}-\delta_{2}} \\
&+C_{B,\beta,\lambda}C_{3}B(1-\beta,1+2(\alpha_{0}-\alpha_{2}))K^{m}_{1,\alpha_{2}}(t)^{2}t^{1+2\alpha_{0}-\beta-2\alpha_{2}}
\end{split}
\end{equation*}
for any $0<t\leq T$, where $B(x,y)$ is the beta function.
Therefore, $K^{m+1}_{1,\alpha}$ and $K^{m+1}_{2,\beta}$ can be defined as
\begin{equation}
\begin{split}
K^{m+1}_{1,\alpha}(t)=&K^{0}_{1,\alpha}(t) \\
&+C_{A,\alpha+\delta_{1},\lambda}C_{1}B(1-(\alpha+\delta_{1}),1+2(\alpha_{0}-\alpha_{1})) \\
&\times K^{m}_{1,\alpha_{1}}(t)^{2}t^{1+\alpha_{0}-2\alpha_{1}-\delta_{1}} \\
&+C_{A,\alpha,\lambda}C_{4}L_{f}B(1-\alpha,1+\beta_{0}-\beta_{1})K^{m}_{2,\beta_{1}}(t)t^{1+\beta_{0}-\alpha_{0}-\beta_{1}},
\end{split}
\end{equation}
\begin{equation}
\begin{split}
K^{m+1}_{2,\beta}(t)=&K^{0}_{2,\beta}(t) \\
&+C_{B,\beta+\delta_{2},\lambda}C_{2}B(1-(\beta+\delta_{2}),1+\alpha_{0}+\beta_{0}-\alpha_{2}-\beta_{2}) \\
&\times K^{m}_{1,\alpha_{2}}(t)K^{m}_{2,\beta_{2}}(t)t^{1+\alpha_{0}-\alpha_{2}-\beta_{2}-\delta_{2}} \\
&+C_{B,\beta,\lambda}C_{3}B(1-\beta,1+2(\alpha_{0}-\alpha_{2}))K^{m}_{1,\alpha_{2}}(t)^{2}t^{1+2\alpha_{0}-\beta_{0}-2\alpha_{2}}.
\end{split}
\end{equation}
It follows from (3.7), (3.8) that
\begin{equation*}
\|u^{m+1}(t)\|_{X^{\alpha}}\leq K^{m+1}_{1,\alpha}(t)t^{\alpha_{0}-\alpha},
\end{equation*}
\begin{equation*}
\|\theta^{m+1}(t)\|_{Y^{\beta}}\leq K^{m+1}_{2,\beta}(t)t^{\beta_{0}-\beta}
\end{equation*}
for any $0<t\leq T$.
Furthermore, we utilize inductive assumptions for $K^{m}_{1,\alpha}$ and $K^{m}_{2,\beta}$, and conclude that $K^{m+1}_{1,\alpha}(0)=0$, $K^{m+1}_{2,\beta}(0)=0$.
It can be easily seen from the induction with respect to $m$ that $K^{m}_{1,\alpha}\leq K^{m+1}_{1,\alpha}$, $K^{m}_{2,\beta}\leq K^{m+1}_{2,\beta}$ on $[0,T]$ for any $m \in \mathbb{Z}$, $m\geq 0$.
\end{proof}
We can see that a mild solution $(u,\theta)$ of (1.1), (1.2) is constructed by the following lemmas.
Set $K^{m}(t)=\max\{K^{m}_{1,\alpha}(t), K^{m}_{2,\beta}(t) \ ; \ \alpha=\alpha_{1}, \alpha_{2}, \beta=\beta_{1}, \beta_{2}\}$.
Then it follows from Lemma 3.1 that $K^{m}$ is a monotone increasing continuous function on $[0,T]$ satisfying $K^{m}(0)=0$, $K^{m}\leq K^{m+1}$ on $[0,T]$ for any $m \in \mathbb{Z}$, $m\geq 0$.
It is required that $C$ is independent of not only $u$, $\theta$ and $t$ but also $m$ throughout this subsection.
\begin{lemma}
Let $\alpha_{0}$ and $\beta_{0}$ satisfy $(2.33)_{1}$.
Then there exists a positive constant $T_{1}\leq T$ depending only on $n$, $\Omega$, $p$, $q$, $\alpha_{0}$, $\beta_{0}$, $u_{0}$, $\theta_{0}$, $L_{f}$ and $T$ such that $\{t^{\alpha-\alpha_{0}}u^{m}\}_{m}$ and $\{t^{\beta-\beta_{0}}\theta^{m}\}_{m}$ are Cauchy sequences in $C([0,T_{1}];X^{\alpha})$ for $\alpha=\alpha_{1}, \alpha_{2}$ and in $C([0,T_{1}];Y^{\beta})$ for $\beta=\beta_{1}, \beta_{2}$ respectively.
\end{lemma}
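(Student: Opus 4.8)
The plan is to run the contraction step of the Fujita--Kato iteration in the same weighted norms used in Lemma 3.1. Setting $U^{m}:=u^{m+1}-u^{m}$ and $\Theta^{m}:=\theta^{m+1}-\theta^{m}$ and subtracting consecutive instances of (3.2) gives $U^{m+1}=\mathcal{F}(u^{m+1},\theta^{m+1})-\mathcal{F}(u^{m},\theta^{m})$ and $\Theta^{m+1}=\mathcal{G}(u^{m+1},\theta^{m+1})-\mathcal{G}(u^{m},\theta^{m})$, so the difference estimates (2.36), (2.37) apply directly. I would introduce $L^{m}_{1,\alpha}(t):=\sup_{0<s\le t}s^{\alpha-\alpha_{0}}\|U^{m}(s)\|_{X^{\alpha}}$ and $L^{m}_{2,\beta}(t):=\sup_{0<s\le t}s^{\beta-\beta_{0}}\|\Theta^{m}(s)\|_{Y^{\beta}}$ for $\alpha=\alpha_{1},\alpha_{2}$, $\beta=\beta_{1},\beta_{2}$, and reduce the assertion to showing that $L^{m}(T_{1}):=\max\{L^{m}_{1,\alpha}(T_{1}),L^{m}_{2,\beta}(T_{1})\}\to 0$ geometrically for a suitably small $T_{1}$.

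The first step, which actually fixes $T_{1}$, is to secure a bound on $\{K^{m}\}$ that is uniform in $m$. Feeding $K^{m}\le$ const into the recursion (3.7), (3.8) and using that \emph{every} power of $t$ occurring there is nonnegative --- this is exactly the content of (2.32) and (2.33) --- one checks that the map sending $(\sup_{\alpha}K^{m}_{1,\alpha}(T_{1}),\sup_{\beta}K^{m}_{2,\beta}(T_{1}))$ to its successor carries a small box into itself once $K^{0}_{1,\alpha}(T_{1})$ and $K^{0}_{2,\beta}(T_{1})$ are small, which holds for $T_{1}$ small since these vanish at $0$. As $K^{m}\le K^{m+1}$ by Lemma 3.1, the sequence is monotone and bounded, so $\sup_{m}K^{m}(T_{1})=:M<\infty$ with $M$ as small as we please. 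It is worth noting that the temperature bound must be propagated first: the recursion for $K^{m}_{2,\beta}$ feeds back only through quadratic terms and so stays comparable to its small datum $K^{0}_{2,\beta}$, whereas the velocity recursion contains the force term $C_{A,\alpha,\lambda}C_{4}L_{f}B(\cdots)K^{m}_{2,\beta_{1}}(t)\,t^{1+\beta_{0}-\alpha_{0}-\beta_{1}}$, whose coefficient need not be small; because it multiplies the already small temperature bound, the velocity estimate still closes.

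With $M$ in hand I would estimate the differences. Substituting $\|u^{m}(s)\|_{X^{\alpha_{1}}}+\|u^{m+1}(s)\|_{X^{\alpha_{1}}}\le 2Ms^{\alpha_{0}-\alpha_{1}}$ and $\|U^{m}(s)\|_{X^{\alpha_{1}}}\le L^{m}_{1,\alpha_{1}}(T_{1})s^{\alpha_{0}-\alpha_{1}}$ (and the analogous temperature bounds) into (2.36), (2.37) and evaluating the beta integrals exactly as in Lemma 3.1 yields the linear recursion
\[
L^{m+1}_{1,\alpha}(T_{1})\le CM\,T_{1}^{e_{1}}L^{m}_{1,\alpha_{1}}(T_{1})+CL_{f}\,T_{1}^{e_{2}}L^{m}_{2,\beta_{1}}(T_{1}),
\]
\[
L^{m+1}_{2,\beta}(T_{1})\le CM\,T_{1}^{e_{3}}L^{m}_{1,\alpha_{2}}(T_{1})+CM\,T_{1}^{e_{4}}L^{m}_{2,\beta_{2}}(T_{1}),
\]
where $e_{1}=1+\alpha_{0}-\delta_{1}-2\alpha_{1}$, $e_{2}=1-\alpha_{0}+\beta_{0}-\beta_{1}$, and the remaining exponents are all nonnegative by (2.32), (2.33). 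Every coefficient except the force coefficient $CL_{f}T_{1}^{e_{2}}$ carries a factor $M$ and is therefore small once $T_{1}$ is small.

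The main obstacle is precisely this force coefficient in the degenerate case $e_{2}=0$, which occurs exactly when $\beta_{1}=1-\alpha_{0}+\beta_{0}$, i.e.\ in the case $\alpha_{0}-\beta_{0}-\frac{n}{2}(\frac{1}{p}-\frac{1}{q})=1$ of (2.33); there $CL_{f}$ is a fixed constant and a single-step contraction fails. The resolution is that the coupling is one-directional: temperature enters the velocity difference with an $O(1)$ coefficient, but velocity enters the temperature difference only through $M$-small coefficients, so the $2\times 2$ coefficient matrix degenerates to a nilpotent one as $T_{1}\to+0$ and its square has entirely $O(M)$ entries. Hence the two-step map contracts, $L^{m+2}(T_{1})\le\rho\,L^{m}(T_{1})$ with $\rho<1$ for $T_{1}$ small, forcing $L^{m}(T_{1})\to 0$ geometrically along both parities of $m$. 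This gives the Cauchy property of $\{t^{\alpha-\alpha_{0}}u^{m}\}_{m}$ and $\{t^{\beta-\beta_{0}}\theta^{m}\}_{m}$ in $C([0,T_{1}];X^{\alpha})$ and $C([0,T_{1}];Y^{\beta})$, continuity at $t=0$ being inherited from the uniform weighted bounds just as for $K^{0}$.
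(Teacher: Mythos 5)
Your argument is correct and follows the paper's own route: first a uniform bound $K^{m}\leq CK^{0}$ on $[0,\tau_{1}]$ extracted from the recursion (3.7)--(3.8), then a linear recursion for the consecutive differences $u^{m+1}-u^{m}$, $\theta^{m+1}-\theta^{m}$ via (2.36)--(2.37) in the weighted norms, whose coefficients are made less than one by shrinking $T_{1}$ (the paper's condition $C(K^{0}(T_{1})+T_{1}^{1+\beta_{0}-\alpha_{0}-\beta_{1}})<1$), giving geometric decay and hence the Cauchy property. The only deviation is your two-step contraction for the degenerate case $e_{2}=1+\beta_{0}-\alpha_{0}-\beta_{1}=0$, which cannot occur under the hypothesis $(2.33)_{1}$ of this lemma (there $e_{2}>0$, so the one-step contraction you already set up suffices); that degenerate case is precisely what Lemma 3.3 handles.
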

\begin{proof}
It follows from (2.32), $(2.33)_{1}$, (3.7), (3.8) that $K^{m}$ satisfies the following inductive inequality with respect to $m$:
\begin{equation}
K^{m+1}(t)\leq K^{0}(t)+CK^{m}(t)^{2}+CK^{m}(t)t^{1+\beta_{0}-\alpha_{0}-\beta_{1}}
\end{equation}
for any $0<t\leq T$, $m \in \mathbb{Z}$, $m\geq 0$.
Since $K^{0}(0)=0$, $K^{m}\leq K^{m+1}$ on $[0,T]$, $1+\beta_{0}-\alpha_{0}-\beta_{1}>0$, an elementary calculation shows that there exists a positive constant $\tau_{1}\leq T$ such that $K^{m}\leq CK^{0}$ on $[0,\tau_{1}]$.
Therefore, we can utilize (3.3), (3.4) to obtain the following inequalities:
\begin{equation}
\max_{\alpha=\alpha_{1}, \alpha_{2}}\{t^{\alpha-\alpha_{0}}\|u^{m}(t)\|_{X^{\alpha}}\}\leq CK^{0}(t),
\end{equation}
\begin{equation}
\max_{\beta=\beta_{1}, \beta_{2}}\{t^{\beta-\beta_{0}}\|\theta^{m}(t)\|_{Y^{\beta}}\}\leq CK^{0}(t)
\end{equation}
for any $0<t\leq \tau_{1}$.
It is sufficient for the conclusion that we give $X^{\alpha}$-estimates for $u^{m+1}-u^{m}$ and $Y^{\beta}$-estimates for $\theta^{m+1}-\theta^{m}$.
It can be easily seen from (2.34), (2.35), (3.2) with $m=0$ that
\begin{equation*}
\max_{\alpha=\alpha_{1}, \alpha_{2}}\{t^{\alpha-\alpha_{0}}\|(u^{1}-u^{0})(t)\|_{X^{\alpha}}\}\leq CK^{0}(t)(K^{0}(t)+1),
\end{equation*}
\begin{equation*}
\max_{\beta=\beta_{1}, \beta_{2}}\{t^{\beta-\beta_{0}}\|(\theta^{1}-\theta^{0})(t)\|_{Y^{\beta}}\}\leq CK^{0}(t)^{2}
\end{equation*}
for any $0<t\leq \tau_{1}$.
We utilize (2.36), (2.37), (3.10), (3.11) and the induction with respect to $m$, and obtain the following inequalities:
\begin{equation}
\begin{split}
\max_{\alpha=\alpha_{1}, \alpha_{2}}\{t^{\alpha-\alpha_{0}}\|(u^{m+1}-u^{m})(t)\|_{X^{\alpha}}\}\leq& CK^{0}(t)(K^{0}(t)+1) \\
&\times \{C(K^{0}(t)+t^{1+\beta_{0}-\alpha_{0}-\beta_{1}})\}^{m},
\end{split}
\end{equation}
\begin{equation}
\max_{\beta=\beta_{1}, \beta_{2}}\{t^{\beta-\beta_{0}}\|(\theta^{m+1}-\theta^{m})(t)\|_{Y^{\beta}}\}\leq CK^{0}(t)(K^{0}(t)+1)(CK^{0}(t))^{m}
\end{equation}
for any $0<t\leq \tau_{1}$.
Since $K^{0}(0)=0$, $1+\beta_{0}-\alpha_{0}-\beta_{1}>0$, we can take a positive constant $T_{1}\leq \tau_{1}$ satisfying $C(K^{0}(T_{1})+T^{1+\beta_{0}-\alpha_{0}-\beta_{1}}_{1})<1$, $CK^{0}(T_{1})<1$.
Then $\{t^{\alpha-\alpha_{0}}u^{m}\}_{m}$ and $\{t^{\beta-\beta_{0}}\theta^{m}\}_{m}$ are Cauchy sequences in $C([0,T_{1}];X^{\alpha})$ and in $C([0,T_{1}];Y^{\beta})$ respectively.
\end{proof}
\begin{lemma}
Let $\alpha_{0}$ and $\beta_{0}$ satisfy $(2.33)_{2}$.
Then there exists a positive constant $T_{2}\leq T$ depending only on $n$, $\Omega$, $p$, $q$, $\alpha_{0}$, $\beta_{0}$, $u_{0}$, $\theta_{0}$, $L_{f}$ and $T$ such that $\{t^{\alpha-\alpha_{0}}u^{m}\}_{m}$ and $\{t^{\beta-\beta_{0}}\theta^{m}\}_{m}$ are Cauchy sequences in $C([0,T_{2}];X^{\alpha})$ for $\alpha=\alpha_{1}, \alpha_{2}$ and in $C([0,T_{2}];Y^{\beta})$ for $\beta=\beta_{1}, \beta_{2}$ respectively.
\end{lemma}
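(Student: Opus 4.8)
The plan is to follow the scheme of Lemma 3.2 as far as possible, the only new feature being the non-decaying coupling coming from $f(\theta)$. Put $K^{m}_{1}(t):=\max_{\alpha=\alpha_{1},\alpha_{2}}K^{m}_{1,\alpha}(t)$ and $K^{m}_{2}(t):=\max_{\beta=\beta_{1},\beta_{2}}K^{m}_{2,\beta}(t)$; taking the maxima in (3.7), (3.8) and using $K^{m}_{1,\alpha_{i}}\le K^{m}_{1}$, $K^{m}_{2,\beta_{j}}\le K^{m}_{2}$ gives
\[ K^{m+1}_{1}(t)\le K^{0}_{1}(t)+C\,K^{m}_{1}(t)^{2}\,t^{\,1+\alpha_{0}-2\alpha_{1}-\delta_{1}}+b\,K^{m}_{2}(t)\,t^{\,1+\beta_{0}-\alpha_{0}-\beta_{1}}, \]
\[ K^{m+1}_{2}(t)\le K^{0}_{2}(t)+C\,K^{m}_{1}(t)K^{m}_{2}(t)\,t^{\,1+\alpha_{0}-\alpha_{2}-\beta_{2}-\delta_{2}}+C\,K^{m}_{1}(t)^{2}\,t^{\,1+2\alpha_{0}-\beta_{0}-2\alpha_{2}}, \]
where $b=\max_{\alpha=\alpha_{1},\alpha_{2}}C_{A,\alpha,\lambda}C_{4}L_{f}B(1-\alpha,1+\beta_{0}-\beta_{1})$. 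Under $(2.33)_{2}$ we have $\beta_{1}=1-\alpha_{0}+\beta_{0}$, so the exponent $1+\beta_{0}-\alpha_{0}-\beta_{1}$ \emph{vanishes}; hence the last term of the $K_{1}$-recursion no longer decays as $t\to+0$, and the single-quantity argument (3.9) that produced $T_{1}$ in Lemma 3.2 breaks down. Observe first that $(2.9)$ and $(2.33)_{2}$ force $\alpha_{0}>0$: if $\alpha_{0}=0$ then $(2.33)_{2}$ yields $\frac{n}{2}(\frac{1}{q}-\frac{1}{p})=1+\beta_{0}\ge1$, contradicting $\frac{1}{q}-\frac{1}{p}<\frac{2}{n}$. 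Consequently $1+\beta_{0}-\beta_{1}=\alpha_{0}>0$, the beta function defining $b$ is finite, and by (2.32) all remaining exponents are $\ge0$.

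The key point is that the one non-decaying term couples $\theta$ into the $u$-equation only, so it can be suppressed by weighting $u$ down. I would fix a small $\eta>0$ with $\eta b\le\tfrac12$ and set $R^{m}=\eta K^{m}_{1}+K^{m}_{2}$. Multiplying the $K_{1}$-recursion by $\eta$ and adding the $K_{2}$-recursion, the offending term becomes $\eta b\,K^{m}_{2}\le\tfrac12 R^{m}$, while every remaining term is quadratic; using $K^{m}_{1}\le R^{m}/\eta$, $K^{m}_{2}\le R^{m}$ and $t^{e}\le T^{e}$ on the bounded nonnegative exponents $e$, one obtains the scalar inequality
\[ R^{m+1}(t)\le R^{0}(t)+\tfrac12 R^{m}(t)+C_{\eta}\,R^{m}(t)^{2}, \]
with $C_{\eta}$ depending only on the fixed data. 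Since $R^{0}(0)=0$ by (2.5), (2.6) and $R^{0}$ is continuous and increasing, I would choose $T_{2}\le T$ so small that $C_{\eta}R^{0}(T_{2})\le\tfrac{1}{16}$; an induction on $m$ then gives $R^{m}\le 4R^{0}$ on $[0,T_{2}]$, and therefore $K^{m}_{1}\le 4R^{0}/\eta$ and $K^{m}_{2}\le 4R^{0}$ are bounded uniformly in $m$.

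Finally, the Cauchy property follows from (2.36), (2.37) in the same weighted combination. Writing $\Delta K^{m}_{1}$, $\Delta K^{m}_{2}$ for the normalized sup-norms of $u^{m+1}-u^{m}$, $\theta^{m+1}-\theta^{m}$ and $D^{m}=\eta\Delta K^{m}_{1}+\Delta K^{m}_{2}$, the difference estimates carry the same non-decaying term $b\,\Delta K^{m-1}_{2}$, which contributes $\eta b\,\Delta K^{m-1}_{2}\le\tfrac12 D^{m-1}$ after weighting, the other terms carrying factors bounded by the uniform estimate $K^{m}_{1},K^{m}_{2}\le CR^{0}(T_{2})$. Shrinking $T_{2}$ further so that these factors are $\le\tfrac14$ gives $D^{m}\le\tfrac34 D^{m-1}$, whence $\sum_{m}D^{m}<\infty$ and $\{t^{\alpha-\alpha_{0}}u^{m}\}_{m}$, $\{t^{\beta-\beta_{0}}\theta^{m}\}_{m}$ are Cauchy in $C([0,T_{2}];X^{\alpha})$ and $C([0,T_{2}];Y^{\beta})$ respectively. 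The only genuine obstacle is the non-decaying linear coupling term; the whole point is that it acts in a single direction, so the asymmetric weight $R^{m}=\eta K^{m}_{1}+K^{m}_{2}$ converts it into a contraction constant $\eta b<1$ that shrinking the interval alone could never supply.
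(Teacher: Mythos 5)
Your argument is correct, but it resolves the central difficulty by a genuinely different device than the paper. Both proofs start from the same observation: under $(2.33)_{2}$ one has $1+\beta_{0}-\alpha_{0}-\beta_{1}=0$, so the linear coupling term $b\,K^{m}_{2,\beta_{1}}(t)$ in the $K_{1}$-recursion carries no positive power of $t$ and cannot be made small by shrinking the time interval. The paper exploits the \emph{directionality} of this coupling by composing the recursion over two steps: substituting the $K_{2}$-inequality of $(3.14)$ into the $K_{1}$-inequality yields $(3.15)$, $K^{m+2}(t)\leq C(K^{0}(t)+K^{m+1}(t)^{2}+K^{m}(t)^{2})$, in which every non-$K^{0}$ term is quadratic, and then runs the same smallness argument as in Lemma 3.2 on this two-step recursion (using the monotonicity $K^{m}\leq K^{m+1}\leq K^{m+2}$). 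You exploit the same directionality differently, by an asymmetric weighting $R^{m}=\eta K^{m}_{1}+K^{m}_{2}$ with $\eta b\leq\tfrac12$, which turns the non-decaying linear term into a contraction with constant $\tfrac12$ and reduces everything to a one-step scalar recursion $R^{m+1}\leq R^{0}+\tfrac12 R^{m}+C_{\eta}(R^{m})^{2}$. Your route has two small advantages: it does not need the monotonicity of $K^{m}$ in $m$, and it treats the Cauchy estimates for the differences explicitly with the same weight, giving the geometric decay $D^{m}\leq\tfrac34 D^{m-1}$ --- a point the paper dispatches with ``we can carry out the same proof as in Lemma 3.2,'' even though the factor $\{C(K^{0}(t)+t^{1+\beta_{0}-\alpha_{0}-\beta_{1}})\}^{m}$ from $(3.12)$ does not literally become small when the exponent vanishes and in fact requires either your weighting or the paper's two-step composition to close. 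The preliminary observations you make (that $(2.9)$ and $(2.33)_{2}$ force $\alpha_{0}>0$, hence $1+\beta_{0}-\beta_{1}=\alpha_{0}>0$ and the beta function in $b$ is finite, and that $(2.32)$ keeps all remaining exponents nonnegative) are accurate and correctly identify why the remaining coefficients are controlled.
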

\begin{proof}
It is obvious from $(2.32)_{2}$ that $1+\beta_{0}-\alpha_{0}-\beta_{1}=0$.
We must consider, instead of (3.9), the following inductive inequality:
\begin{equation}
\begin{split}
&K^{m+1}_{1,\alpha}(t)\leq K^{0}_{1,\alpha}(t)+CK^{m}_{1,\alpha_{1}}(t)^{2}+CK^{m}_{2,\beta_{1}}(t), \\
&K^{m+1}_{2,\beta}(t)\leq K^{0}_{2,\beta}(t)+C(K^{m}_{1,\alpha_{2}}(t)K^{m}_{2,\beta_{2}}(t)+K^{m}_{1,\alpha_{2}}(t)^{2})
\end{split}
\end{equation}
for any $\alpha=\alpha_{1}, \alpha_{2}$, $\beta=\beta_{1}, \beta_{2}$, $0<t\leq T$, $m \in \mathbb{Z}$, $m\geq 0$ which is derived from (3.7), (3.8).
It can be easily seen from (3.14) that
\begin{equation}
K^{m+2}(t)\leq C(K^{0}(t)+K^{m+1}(t)^{2}+K^{m}(t)^{2})
\end{equation}
for any $0<t\leq T$.
Since $K^{0}(0)=0$, $K^{m}\leq K^{m+1}\leq K^{m+2}$ on $[0,T]$, an elementary calculation shows that there exists a positive constant $\tau_{2}\leq T$ such that $K^{m}\leq CK^{0}$ on $[0,\tau_{2}]$.
It remains to give $X^{\alpha}$-estimates for $u^{m+1}-u^{m}$ and $Y^{\beta}$-estimates for $\theta^{m+1}-\theta^{m}$, but we can carry out the same proof as in Lemma 3.2.
\end{proof}
Set $T_{*}=\min\{T_{1}, T_{2}\}$.
Then it follows from Lemmas 3.2 and 3.3 that there exists a pair of two functions $(u,\theta)$ satisfying
\begin{equation*}
u \in C((0,T_{*}];X^{\alpha_{0}}),
\end{equation*}
\begin{equation*}
\theta \in C((0,T_{*}];Y^{\beta_{0}})
\end{equation*}
such that
\begin{equation}
\begin{cases}
t^{\alpha-\alpha_{0}}u^{m}\rightarrow t^{\alpha-\alpha_{0}}u & \mathrm{in} \ C([0,T_{*}];X^{\alpha}) \ \mathrm{as} \ m\rightarrow \infty, \\
t^{\beta-\beta_{0}}\theta^{m}\rightarrow t^{\beta-\beta_{0}}\theta & \mathrm{in} \ C([0,T_{*}];Y^{\beta}) \ \mathrm{as} \ m\rightarrow \infty
\end{cases}
\end{equation}
for $\alpha=\alpha_{1}, \alpha_{2}$, $\beta=\beta_{1}, \beta_{2}$.
By applying the dominated convergence theorem to (3.2), we can conclude that $(u,\theta)$ satisfies (II) in $(0,T_{*}]$.

\subsection{$X^{\alpha}\times Y^{\beta}$-estimates for local mild solutions}
We will deal with basic properties of local mild solutions of (1.1), (1.2).
It is sufficient for (2.11)--(2.14) that we prove the following lemma:
\begin{lemma}
Let $(u,\theta)$ be a mild solution of $(1.1)$, $(1.2)$ in $(0,T_{*}]$ given by $(3.16)$.
Then
\begin{equation}
t^{\alpha-\alpha_{0}}\|u(t)-e^{-tA}u_{0}\|_{X^{\alpha}}\leq CK^{0}(t),
\end{equation}
\begin{equation}
t^{\beta-\beta_{0}}\|\theta(t)-e^{-tB}\theta_{0}\|_{Y^{\beta}}\leq CK^{0}(t)
\end{equation}
for any $\alpha_{0}\leq\alpha<1$, $\beta_{0}\leq\beta<1$, $0<t\leq T_{*}$, where $C$ is a positive constant independent of $u$, $\theta$ and $t$.
\end{lemma}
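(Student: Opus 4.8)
The plan is to observe that the two quantities to be estimated are nothing but the Duhamel integrals. Since $(u,\theta)$ satisfies (II) on $(0,T_*]$, we have $u(t)-e^{-tA}u_0=\mathcal{F}(u,\theta)(t)$ and $\theta(t)-e^{-tB}\theta_0=\mathcal{G}(u,\theta)(t)$, so (3.17) and (3.18) are equivalent to $\|\mathcal{F}(u,\theta)(t)\|_{X^\alpha}\leq CK^0(t)t^{\alpha_0-\alpha}$ and $\|\mathcal{G}(u,\theta)(t)\|_{Y^\beta}\leq CK^0(t)t^{\beta_0-\beta}$ for $\alpha_0\leq\alpha<1$ and $\beta_0\leq\beta<1$. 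For these I would apply directly the estimates (2.34), (2.35) from Section 2.6, which already bound the two integrals through $\|u(s)\|_{X^{\alpha_1}}$, $\|u(s)\|_{X^{\alpha_2}}$, $\|\theta(s)\|_{Y^{\beta_1}}$ and $\|\theta(s)\|_{Y^{\beta_2}}$, i.e.\ exactly the fixed exponents for which a priori control is on hand.

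Next I would supply that control. Passing the uniform approximation bounds (3.10), (3.11) to the limit along (3.16), I get $\|u(s)\|_{X^{\alpha_i}}\leq CK^0(s)s^{\alpha_0-\alpha_i}$ and $\|\theta(s)\|_{Y^{\beta_i}}\leq CK^0(s)s^{\beta_0-\beta_i}$ on $(0,T_*]$ for $i=1,2$. Substituting into the integrands of (2.34), (2.35), using the monotonicity of $K^0$ to replace $K^0(s)$ by $K^0(t)$, and absorbing the square via $K^0(t)^2\leq K^0(T_*)K^0(t)=CK^0(t)$, I can pull a single factor $CK^0(t)$ out of every term; the factor $e^{-\lambda(t-s)}\leq 1$ is discarded.

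What then remains are elementary time integrals $\int_0^t(t-s)^{-a}s^b\,ds=B(1-a,1+b)t^{1-a+b}$. For the two terms of $\mathcal{F}$ the powers of $t$ produced are $1+2\alpha_0-\alpha-2\alpha_1-\delta_1$ and $1+\beta_0-\alpha-\beta_1$; extracting the target $t^{\alpha_0-\alpha}$ leaves $1+\alpha_0-2\alpha_1-\delta_1$ and $1+\beta_0-\alpha_0-\beta_1$, which are nonnegative by $(2.32)_1$ and by (2.33), hence bounded on $[0,T_*]$. For the two terms of $\mathcal{G}$ the powers are $1+\alpha_0+\beta_0-\beta-\alpha_2-\beta_2-\delta_2$ and $1+2\alpha_0-\beta-2\alpha_2$; extracting $t^{\beta_0-\beta}$ leaves $1+\alpha_0-\alpha_2-\beta_2-\delta_2$ and $1+2\alpha_0-\beta_0-2\alpha_2$, nonnegative by $(2.32)_2$ and $(2.32)_3$. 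This gives (3.17), (3.18).

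The delicate point is the convergence of the singular integrals, since $B(1-a,1+b)$ is finite only for $a<1$ and $b>-1$. The requirement $a<1$ reads $\alpha+\delta_1<1$ and $\beta+\delta_2<1$: when $\alpha_0>0$ (resp.\ $\beta_0>0$) one has $\delta_1=0$ (resp.\ $\delta_2=0$), so $\alpha<1$ (resp.\ $\beta<1$) already suffices, while when $\alpha_0=0$ (resp.\ $\beta_0=0$) the freedom to take $\delta_1$ (resp.\ $\delta_2$) as small as needed secures it. The requirement $b>-1$ for the exponents $2(\alpha_0-\alpha_i)$, $\alpha_0+\beta_0-\alpha_2-\beta_2$ and $\beta_0-\beta_i$ follows from the same inequalities (2.32), together with $\alpha_i<1-\delta_1$, $\beta_i<1-\delta_2$ and $\alpha_0<\alpha_i$, $\beta_0<\beta_i$ built into the choice of the fixed exponents. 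Everything else is a routine bookkeeping of constants.
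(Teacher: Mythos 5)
Your proposal is correct and follows essentially the same route as the paper: identify $u(t)-e^{-tA}u_{0}$ and $\theta(t)-e^{-tB}\theta_{0}$ with $\mathcal{F}(u,\theta)(t)$ and $\mathcal{G}(u,\theta)(t)$, pass (3.10), (3.11) to the limit along (3.16), insert them into (2.34), (2.35), and absorb the extra factor $K^{0}(t)\leq K^{0}(T_{*})$ into the constant, handling the gap between $\alpha<1-\delta_{1}$ and $\alpha<1$ through the choice of $\delta_{1}$, $\delta_{2}$. The beta-function and exponent bookkeeping you spell out is exactly what the paper leaves implicit, and your verifications against (2.32), (2.33) are accurate.
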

\begin{proof}
It can be easily seen that (3.10), (3.11) with $(u,\theta)$ instead of $(u^{m},\theta^{m})$ hold for any $\alpha_{0}\leq\alpha<1-\delta_{1}$, $\beta_{0}\leq\beta<1-\delta_{2}$.
By applying (3.10), (3.11) to (2.34), (2.35), we have the following inequalities:
\begin{equation*}
t^{\alpha-\alpha_{0}}\|u(t)-e^{-tA}u_{0}\|_{X^{\alpha}}\leq CK^{0}(t)(K^{0}(t)+1),
\end{equation*}
\begin{equation*}
t^{\beta-\beta_{0}}\|\theta(t)-e^{-tB}\theta_{0}\|_{Y^{\beta}}\leq CK^{0}(t)^{2}
\end{equation*}
for any $\alpha_{0}\leq\alpha<1-\delta_{1}$, $\beta_{0}\leq\beta<1-\delta_{2}$, $0<t\leq T_{*}$.
Furthermore, the choice of $\delta_{1}$ and $\delta_{2}$ allows us to assume that $\alpha_{0}\leq \alpha<1$, $\beta_{0}\leq \beta<1$.
These inequalities clearly lead to (3.17), (3.18).
\end{proof}
It follows from (3.17) with $\alpha=\alpha_{0}$, (3.18) with $\beta=\beta_{0}$ that
\begin{equation}
\|u(t)-u_{0}\|_{X^{\alpha_{0}}}\leq \|(e^{-tA}-I)u_{0}\|_{X^{\alpha_{0}}}+CK^{0}(t),
\end{equation}
\begin{equation}
\|\theta(t)-\theta_{0}\|_{Y^{\beta_{0}}}\leq \|(e^{-tB}-I)\theta_{0}\|_{Y^{\beta_{0}}}+CK^{0}(t)
\end{equation}
for any $0<t\leq T_{*}$.
By taking $t$ as zero, (3.19), (3.20) imply that $u(0)=u_{0}$, $\theta(0)=\theta_{0}$, consequently, $(u,\theta)$ is a mild solution of (1.1), (1.2) on $[0,T_{*}]$.
It is obvious from (3.17), (3.18) that
\begin{equation}
t^{\alpha-\alpha_{0}}\|u(t)\|_{X^{\alpha}}\leq t^{\alpha-\alpha_{0}}\|e^{-tA}u_{0}\|_{X^{\alpha}}+CK^{0}(t),
\end{equation}
\begin{equation}
t^{\beta-\beta_{0}}\|\theta(t)\|_{Y^{\beta}}\leq t^{\beta-\beta_{0}}\|e^{-tB}\theta_{0}\|_{Y^{\beta}}+CK^{0}(t)
\end{equation}
for any $\alpha_{0}\leq\alpha<1$, $\beta_{0}\leq\beta<1$, $0<t\leq T_{*}$.
(2.1), (2.2), (3.21), (3.22) clearly yield (2.11), (2.12).
Moreover, it can be easily seen from (2.5), (2.6), (3.21), (3.22) that (2.13), (2.14) hold for any $\alpha_{0}<\alpha<1$, $\beta_{0}<\beta<1$.

\subsection{Uniqueness of mild solutions}
We proceed to the uniqueness of mild solutions of (1.1), (1.2) on $[0,T]$.
Throughout this subsection, it is required that $C$ is a positive constant independent of $t$, consequently, $C$ may depend on $u$ and $\theta$.
For any $\alpha_{0}\leq \alpha<1$, $\beta_{0}\leq \beta<1$, $0<\tau\leq t\leq T$, let us introduce the following notation:
\begin{equation*}
\|u(t)\|_{(\alpha)}=t^{\alpha-\alpha_{0}}\|u(t)\|_{X^{\alpha}},
\end{equation*}
\begin{equation*}
\|\theta(t)\|_{(\beta)}=t^{\beta-\beta_{0}}\|\theta(t)\|_{Y^{\beta}},
\end{equation*}
\begin{equation*}
\|(u,\theta)(t)\|_{(\alpha,\beta)}=\|u(t)\|_{(\alpha)}+\|\theta(t)\|_{(\beta)},
\end{equation*}
\begin{equation*}
\|u\|_{(\alpha;t)}=\sup_{0<s\leq t}s^{\alpha-\alpha_{0}}\|u(s)\|_{X^{\alpha}},
\end{equation*}
\begin{equation*}
\|\theta\|_{(\beta;t)}=\sup_{0<s\leq t}s^{\beta-\beta_{0}}\|\theta(s)\|_{Y^{\beta}},
\end{equation*}
\begin{equation*}
\|(u,\theta)\|_{(\alpha,\beta;t)}=\|u\|_{(\alpha;t)}+\|\theta\|_{(\beta;t)},
\end{equation*}
\begin{equation*}
\|u\|_{(\alpha;\tau,t)}=\max_{\tau\leq s\leq t}\|u(s)\|_{X^{\alpha}},
\end{equation*}
\begin{equation*}
\|\theta\|_{(\beta;\tau,t)}=\max_{\tau\leq s\leq t}\|\theta(s)\|_{Y^{\beta}},
\end{equation*}
\begin{equation*}
\|(u,\theta)\|_{(\alpha,\beta;\tau,t)}=\|u\|_{(\alpha;\tau,t)}+\|\theta\|_{(\beta;\tau,t)}.
\end{equation*}
It is clear that the uniqueness is derived from the continuous dependence with respect to initial data.
We prove the following lemma:
\begin{lemma}
Let $(u,\theta)$ and $(\bar{u},\bar{\theta})$ be two mild solutions of $(1.1)$, $(1.2)$ on $[0,T]$ with initial data $(u_{0},\theta_{0})$ and $(\bar{u}_{0},\bar{\theta}_{0})$ respectively which satisfy the following conditions:
\begin{itemize}
\item[\rm{(i)}]For any $\alpha_{0}\leq\alpha<1$, $\beta_{0}\leq\beta<1$,
\begin{equation*}
t^{\alpha-\alpha_{0}}u, \ t^{\alpha-\alpha_{0}}\bar{u} \in C([0,T];X^{\alpha}),
\end{equation*}
\begin{equation*}
t^{\beta-\beta_{0}}\theta, \ t^{\beta-\beta_{0}}\bar{\theta} \in C([0,T];Y^{\beta}).
\end{equation*}
\item[\rm{(ii)}]For any $\alpha_{0}<\alpha<1$, $\beta_{0}<\beta<1$,
\begin{equation*}
\|u(t)\|_{X^{\alpha}}=o(t^{\alpha_{0}-\alpha}), \ \|\bar{u}(t)\|_{X^{\alpha}}=o(t^{\alpha_{0}-\alpha}) \ \mathrm{as} \ t\rightarrow +0,
\end{equation*}
\begin{equation*}
\|\theta(t)\|_{Y^{\beta}}=o(t^{\beta_{0}-\beta}), \ \|\bar{\theta}(t)\|_{Y^{\beta}}=o(t^{\beta_{0}-\beta}) \ \mathrm{as} \ t\rightarrow +0.
\end{equation*}
\end{itemize}
Then
\begin{equation}
\|(u-\bar{u},\theta-\bar{\theta})(t)\|_{(\alpha,\beta)}\leq C\|(u_{0}-\bar{u}_{0},\theta_{0}-\bar{\theta}_{0})\|_{(\alpha_{0},\beta_{0})}
\end{equation}
for any $\alpha_{0}\leq\alpha<1$, $\beta_{0}\leq\beta<1$, $0<t\leq T$, where $C$ is a positive constant independent of $t$.
\end{lemma}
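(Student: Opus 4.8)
The plan is to estimate the difference $(w,\eta):=(u-\bar{u},\theta-\bar{\theta})$ by subtracting the two copies of (II), applying the Lipschitz estimates (2.36), (2.37), and closing the resulting inequalities first on a short interval $(0,\tau]$ by an absorption argument and then on $[\tau,T]$ by a singular Gronwall argument. First I would write
$$w(t)=e^{-tA}(u_0-\bar{u}_0)+\{\mathcal{F}(u,\theta)(t)-\mathcal{F}(\bar{u},\bar{\theta})(t)\},$$
$$\eta(t)=e^{-tB}(\theta_0-\bar{\theta}_0)+\{\mathcal{G}(u,\theta)(t)-\mathcal{G}(\bar{u},\bar{\theta})(t)\}.$$
By Lemma 2.1 applied to $A^{\alpha-\alpha_0}e^{-tA}A^{\alpha_0}$ (resp.\ the analogue for $B$), the semigroup terms contribute at most $C\|(u_0-\bar{u}_0,\theta_0-\bar{\theta}_0)\|_{(\alpha_0,\beta_0)}$ to the weighted norms $\|w(t)\|_{(\alpha)}$ and $\|\eta(t)\|_{(\beta)}$. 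Into the nonlinear parts I would substitute (2.36), (2.37), replace every solution norm by a weighted norm times a power of $s$ (for instance $\|u(s)\|_{X^{\alpha_1}}\le s^{\alpha_0-\alpha_1}\|u\|_{(\alpha_1;s)}$, all finite by (i)), pull the suprema out of the integrals, and evaluate $\int_0^t(t-s)^{-a}s^{-b}\,ds=t^{1-a-b}B(1-a,1-b)$ exactly as in Lemma 3.1; inequalities (2.32), (2.33) make every exponent of $t$ nonnegative, so the kernels are integrable and the $t$-powers bounded on $[0,T]$.

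The decisive point is where the weights land. Since $\alpha_0<\alpha_1,\alpha_2$ and $\beta_0<\beta_1,\beta_2$ strictly, hypothesis (ii) forces $N(t):=\max\{\|u\|_{(\alpha_i;t)},\|\bar{u}\|_{(\alpha_i;t)},\|\theta\|_{(\beta_j;t)},\|\bar{\theta}\|_{(\beta_j;t)}\}\to0$ as $t\to+0$. Every term of $\mathcal{G}(u,\theta)-\mathcal{G}(\bar{u},\bar{\theta})$ and the bilinear term of $\mathcal{F}(u,\theta)-\mathcal{F}(\bar{u},\bar{\theta})$ carries such a factor $N(t)$, while the only exception, the $f$-term of $\mathcal{F}$, produces in the borderline case of (2.33) a genuine constant $c$ times $\|\eta\|_{(\beta_1;t)}$. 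Taking $\alpha\in\{\alpha_1,\alpha_2\}$ and $\beta\in\{\beta_1,\beta_2\}$ and writing $\mathcal{W}(t):=\|w\|_{(\alpha_1;t)}+\|w\|_{(\alpha_2;t)}$, $\mathcal{E}(t):=\|\eta\|_{(\beta_1;t)}+\|\eta\|_{(\beta_2;t)}$, I would thus obtain, on an interval $(0,\tau]$ where $N$ is small,
$$\mathcal{W}(t)\le C\|u_0-\bar{u}_0\|_{X^{\alpha_0}}+CN(t)(\mathcal{W}(t)+\mathcal{E}(t))+c\,\mathcal{E}(t),$$
$$\mathcal{E}(t)\le C\|\theta_0-\bar{\theta}_0\|_{Y^{\beta_0}}+CN(t)(\mathcal{W}(t)+\mathcal{E}(t)).$$
Solving the second inequality for $\mathcal{E}$ (whose right-hand side couples to $\mathcal{W}$ only through the small factor $N$) and inserting the result into the first absorbs the constant coupling $c\,\mathcal{E}$, giving $\mathcal{W}(t)+\mathcal{E}(t)\le C\|(u_0-\bar{u}_0,\theta_0-\bar{\theta}_0)\|_{(\alpha_0,\beta_0)}$ on $(0,\tau]$.

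Finally I would upgrade this to the full range. For arbitrary $\alpha_0\le\alpha<1$, $\beta_0\le\beta<1$ I would feed the now-bounded control norms back into (2.36), (2.37) — first for $\alpha<1-\delta_1$, $\beta<1-\delta_2$, then covering $[\alpha_0,1)$ and $[\beta_0,1)$ by the choice of $\delta_1,\delta_2$ as $0$ or arbitrarily small, exactly as in the second half of the proof of Lemma 3.4. To pass from $(0,\tau]$ to $[\tau,T]$ I would restart the integral equations at $t=\tau$, so that on $[\tau,T]$ the solution norms are bounded by (i) and the kernels are no longer singular at $s=0$; the resulting weakly singular Volterra inequality is handled by the singular Gronwall lemma of \cite[Chapter 1]{Henry}, and there the weights $t^{\alpha-\alpha_0}$ are bounded. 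Combining the two ranges yields (3.23) with $C$ allowed to depend on $u,\theta$ and $T$. The main obstacle is precisely the borderline linear $f$-coupling: it carries no vanishing factor, so a scalar Gronwall fails, and one must exploit the block structure — the temperature inequality being purely quadratic — to close the estimate.
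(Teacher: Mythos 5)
Your proposal is correct and follows essentially the same route as the paper's own proof: subtract the two copies of (II), bound the semigroup difference by $C\|(u_{0}-\bar{u}_{0},\theta_{0}-\bar{\theta}_{0})\|_{(\alpha_{0},\beta_{0})}$, feed (2.36)--(2.37) with the weighted norms into beta-function integrals, absorb on a short initial interval using the smallness of the control norms guaranteed by hypothesis (ii), and then propagate to $[\tau_{0},T]$ where the weights are harmless. The only minor differences are that the paper closes the tail interval by iterating the absorption argument over finitely many subintervals of a fixed length $\tau_{1}$ rather than by a singular Gronwall lemma, and your explicit step of solving the $\theta$-inequality and substituting it into the $u$-inequality is precisely what justifies the paper's inequality (3.26) with $N(t)=M(t)$ in the borderline case $(2.33)_{2}$, where the paper leaves this implicit.
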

\begin{proof}
$D_{0}$, $D$ and $M$ are defined as
\begin{equation*}
D_{0}=\|(u_{0}-\bar{u}_{0},\theta_{0}-\bar{\theta}_{0})\|_{(\alpha_{0},\beta_{0})},
\end{equation*}
\begin{equation*}
D(t)=\max\{\|(u-\bar{u},\theta-\bar{\theta})\|_{(\alpha_{1},\beta_{1};t)}, \|(u-\bar{u},\theta-\bar{\theta})\|_{(\alpha_{2},\beta_{2};t)}\},
\end{equation*}
\begin{equation*}
M(t)=\max\{\|u\|_{(\alpha_{1};t)}, \|u\|_{(\alpha_{2};t)}, \|\bar{u}\|_{(\alpha_{1};t)}, \|\bar{u}\|_{(\alpha_{2};t)}, \|\theta\|_{(\beta_{2};t)}\}.
\end{equation*}
By applying (2.36), (2.37) to $(u,\theta)$ and $(\bar{u},\bar{\theta})$, we have the following inequalities:
\begin{equation}
\begin{split}
\|(u-\bar{u})(t)\|_{(\alpha)}\leq& C\|u_{0}-\bar{u}_{0}\|_{(\alpha_{0})}+CM(t)\|u-\bar{u}\|_{(\alpha_{1};t)} \\
&+Ct^{1+\beta_{0}-\alpha_{0}-\beta_{1}}\|\theta-\bar{\theta}\|_{(\beta_{1};t)},
\end{split}
\end{equation}
\begin{equation}
\|(\theta-\bar{\theta})(t)\|_{(\beta)}\leq C\|\theta_{0}-\bar{\theta}_{0}\|_{(\beta_{0})}+CM(t)\|(u-\bar{u},\theta-\bar{\theta})\|_{(\alpha_{2},\beta_{2};t)}
\end{equation}
for any $\alpha_{0}\leq\alpha<1-\delta_{1}$, $\beta_{0}\leq\beta<1-\delta_{2}$, $0<t\leq T$.
Moreover, it can be easily seen from (3.24) with $\alpha=\alpha_{1}, \alpha_{2}$, (3.25) with $\beta=\beta_{1}, \beta_{2}$ that
\begin{equation}
D(t)\leq CD_{0}+CN(t)D(t)
\end{equation}
for any $0<t\leq T$, where
\begin{equation*}
N(t):=\begin{cases}
M(t)+t^{1+\beta_{0}-\alpha_{0}-\beta_{1}} & \mathrm{if} \ (2.33)_{1}, \\
M(t) & \mathrm{if} \ (2.33)_{2}.
\end{cases}
\end{equation*}
Since $(u,\theta)$ and $(\bar{u},\bar{\theta})$ satisfy (i), (ii), $N$ is a monotone increasing continuous function on $[0,T]$ satisfying $N(0)=0$.
Then we can take a positive constant $\tau_{0}\leq T$ satisfying $CN(\tau_{0})<1$, consequently, $D(\tau_{0})\leq CD_{0}$.
It remains to prove (3.23) for any $\tau_{0}\leq t\leq T$.
For any $\tau_{0}\leq \tau\leq T$, $D(\tau,\cdot)$ and $M(\tau,\cdot)$ are defined as
\begin{equation*}
D(\tau,t)=\max\{\|(u-\bar{u},\theta-\bar{\theta})\|_{(\alpha_{1},\beta_{1};\tau,t)}, \|(u-\bar{u},\theta-\bar{\theta})\|_{(\alpha_{2},\beta_{2};\tau,t)}\},
\end{equation*}
\begin{equation*}
M(\tau,t)=\max\{\|u\|_{(\alpha_{1};\tau,t)}, \|u\|_{(\alpha_{2};\tau,t)}, \|\bar{u}\|_{(\alpha_{1};\tau,t)}, \|\bar{u}\|_{(\alpha_{2};\tau,t)}, \|\theta\|_{(\beta_{2};\tau,t)}\}.
\end{equation*}
It is necessary to remark that $(u,\theta)$ and $(\bar{u},\bar{\theta})$ satisfy
\begin{equation}
\begin{cases}
u(t)=e^{-(t-\tau)A}u(\tau)+\displaystyle\int^{t}_{\tau}e^{-(t-s)A}F(u,\theta)(s)ds, \\
\theta(t)=e^{-(t-\tau)B}\theta(\tau)+\displaystyle\int^{t}_{\tau}e^{-(t-s)B}G(u,\theta)(s)ds
\end{cases}
\end{equation}
for any $\tau\leq t\leq T$.
We subtract (3.27) with $(\bar{u},\bar{\theta})$ from (3.27) with $(u,\theta)$, and obtain
\begin{equation*}
\begin{split}
\|(u-\bar{u})(t)\|_{X^{\alpha}}\leq& \|e^{-(t-\tau)A}(u-\bar{u})(\tau)\|_{X^{\alpha}} \\
&+\int^{t}_{\tau}\|e^{-(t-s)A}(F(u,\theta)-F(\bar{u},\bar{\theta}))(s)\|_{X^{\alpha}}ds,
\end{split}
\end{equation*}
\begin{equation*}
\begin{split}
\|(\theta-\bar{\theta})(t)\|_{Y^{\beta}}\leq& \|e^{-(t-\tau)B}(\theta-\bar{\theta})(\tau)\|_{Y^{\beta}} \\
&+\int^{t}_{\tau}\|e^{-(t-s)B}(G(u,\theta)-G(\bar{u},\bar{\theta}))(s)\|_{Y^{\beta}}ds
\end{split}
\end{equation*}
for any $\alpha_{0}\leq\alpha<1-\delta_{1}$, $\beta_{0}\leq\beta<1-\delta_{2}$, $\tau\leq t\leq T$.
It is obvious that
\begin{equation*}
\int^{t}_{\tau}\|e^{-(t-s)A}(F(u,\theta)-F(\bar{u},\bar{\theta}))(s)\|_{X^{\alpha}}ds,
\end{equation*}
\begin{equation*}
\int^{t}_{\tau}\|e^{-(t-s)B}(G(u,\theta)-G(\bar{u},\bar{\theta}))(s)\|_{Y^{\beta}}ds
\end{equation*}
are estimated like (2.36), (2.37), consequently, we have the following inequalities:
\begin{equation}
\begin{split}
\|(u-\bar{u})(t)\|_{X^{\alpha}}\leq& C\tau^{\alpha_{0}-\alpha}_{0}D_{0}+C(t-\tau)^{1-(\alpha+\delta_{1})}M(\tau_{0},T)\|u-\bar{u}\|_{(\alpha_{1};\tau,t)} \\
&+C(t-\tau)^{1-\alpha}\|\theta-\bar{\theta}\|_{(\beta_{1};\tau,t)},
\end{split}
\end{equation}
\begin{equation}
\begin{split}
\|(\theta-\bar{\theta})(t)\|_{Y^{\beta}}\leq& C\tau^{\beta_{0}-\beta}_{0}D_{0} \\
&+C\{(t-\tau)^{1-(\beta+\delta_{2})}+(t-\tau)^{1-\beta}\}M(\tau_{0},T) \\
&\times \|(u-\bar{u},\theta-\bar{\theta})\|_{(\alpha_{2},\beta_{2};\tau,t)}
\end{split}
\end{equation}
for any $\tau\leq t\leq T$.
Similarly to (3.26), it follows from (3.28) with $\alpha=\alpha_{1}, \alpha_{2}$, (3.29) with $\beta=\beta_{1}, \beta_{2}$ that
\begin{equation}
D(\tau,t)\leq C(\tau^{\alpha_{0}-\alpha}_{0}+\tau^{\beta_{0}-\beta}_{0})D_{0}+CN(\tau,t)D(\tau,t)
\end{equation}
for any $\tau\leq t\leq T$, where
\begin{equation*}
N(\tau,t):=\{(t-\tau)^{1-(\alpha+\delta_{1})}+(t-\tau)^{1-(\beta+\delta_{2})}+(t-\tau)^{1-\beta}\}M(\tau_{0},T)+(t-\tau)^{1-\alpha}.
\end{equation*}
It is clear that there exists a positive constant $\tau_{1}\leq T-\tau$ independent of $\tau$ such that $CN(\tau,\tau+\tau_{1})<1$, consequently, $D(\tau,\tau+\tau_{1})\leq CD_{0}$.
We repeat to carry out the same proof as above, and obtain $D(\tau_{0},T)\leq CD_{0}$.
\end{proof}

\subsection{Existence of global mild solutions}
The main purpose of this subsection is to extend a mild solution of (1.1), (1.2) locally in time to the one globally in time.
By virtue of Theorem 2.1, it is essential for Theorem 2.2 that we obtain global $X^{\alpha}$-estimates (2.15) for $u$ and global $Y^{\beta}$-estimates (2.16) for $\theta$.
For any $0<\lambda<\Lambda_{1}$, $\lambda<\lambda_{1}<\Lambda_{1}$, $\lambda<\lambda_{2}<\min\{2\lambda, \lambda_{1}\}$, let us introduce monotone increasing continuous functions on $[0,\infty)$ defined as
\begin{equation*}
E_{1,\alpha}(t)=\sup_{0<s\leq t}s^{\alpha-\alpha_{0}}e^{\lambda s}\|u(s)\|_{X^{\alpha}},
\end{equation*}
\begin{equation*}
E_{2,\beta}(t)=\sup_{0<s\leq t}s^{\beta-\beta_{0}}e^{\lambda_{2}s}\|\theta(s)\|_{Y^{\beta}}.
\end{equation*}
It is clear that (2.15), (2.16) are established by proving the following lemma:
\begin{lemma}
There exists a positive constant $\varepsilon$ depending only on $n$, $\Omega$, $p$, $q$, $\alpha_{0}$, $\beta_{0}$, $L_{f}$ and $\lambda$ such that
\begin{equation}
E_{1,\alpha}(t)\leq C(\|u_{0}\|_{X^{\alpha_{0}}}+\|\theta_{0}\|_{Y^{\beta_{0}}}),
\end{equation}
\begin{equation}
E_{2,\beta}(t)\leq C(\|u_{0}\|_{X^{\alpha_{0}}}+\|\theta_{0}\|_{Y^{\beta_{0}}})
\end{equation}
for any $\alpha_{0}\leq\alpha<1$, $\beta_{0}\leq\beta<1$, $t>0$, where $C$ is a positive constant independent of $u$, $\theta$ and $t$ provided that
\begin{equation*}
\|u_{0}\|_{X^{\alpha_{0}}}+\|\theta_{0}\|_{Y^{\beta_{0}}}\leq\varepsilon.
\end{equation*}
\end{lemma}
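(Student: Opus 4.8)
The plan is to turn the integral equations (II) into a closed system of weighted a priori inequalities for $E_{1,\alpha}$ and $E_{2,\beta}$, and then to close it under a smallness condition on the data by a scalar quadratic argument combined with a continuity bootstrap. First I would estimate $s^{\alpha-\alpha_0}e^{\lambda s}\|u(s)\|_{X^{\alpha}}$ and $s^{\beta-\beta_0}e^{\lambda_2 s}\|\theta(s)\|_{Y^{\beta}}$ directly from (II). For the linear parts, Lemma 2.1 gives $\|e^{-sA}u_0\|_{X^{\alpha}}\le Cs^{-(\alpha-\alpha_0)}e^{-\lambda s}\|u_0\|_{X^{\alpha_0}}$ and the analogous bound for $e^{-sB}\theta_0$ with any decay rate below $\Lambda_1$; choosing that rate to be $\ge\lambda_2$ for the heat part yields the contributions $C\|u_0\|_{X^{\alpha_0}}$ and $C\|\theta_0\|_{Y^{\beta_0}}$. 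For the nonlinear parts I would use (2.34), (2.35) — applying the heat semigroup decay with rate $\lambda_1$ rather than $\lambda$ — and insert into the integrands the defining bounds $\|u(r)\|_{X^{\alpha_j}}\le E_{1,\alpha_j}(s)\,r^{-(\alpha_j-\alpha_0)}e^{-\lambda r}$ and $\|\theta(r)\|_{Y^{\beta_j}}\le E_{2,\beta_j}(s)\,r^{-(\beta_j-\beta_0)}e^{-\lambda_2 r}$.

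The crux is the exponential bookkeeping. In each convolution the prefactor $e^{\lambda s}$ (resp. $e^{\lambda_2 s}$) must be absorbed. Bounding $e^{-\lambda_1(s-r)}\le e^{-\lambda_2(s-r)}$ where needed, one checks that every integrand acquires a net factor $e^{-cr}$ with $c>0$: the $\theta$-source in $\mathcal{F}$ produces $e^{-(\lambda_2-\lambda)r}$ (so $\lambda<\lambda_2$ is used), the quadratic $u$-terms produce $e^{-\lambda r}$ and $e^{-(2\lambda-\lambda_2)r}$ (so $\lambda_2<2\lambda$ is used), and the bilinear $u\theta$-term in $\mathcal{G}$ produces $e^{-\lambda r}$ (so $\lambda_2<\lambda_1$ is used). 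After the substitution $r=s\sigma$ the remaining integrals are Beta integrals, convergent by (2.32), and the overall power of $s$ equals one of $1+\alpha_0-\delta_1-2\alpha_1$, $1+\beta_0-\alpha_0-\beta_1$, $1+\alpha_0-\alpha_2-\beta_2-\delta_2$, $1-\beta_0+2\alpha_0-2\alpha_2$, each $\ge 0$ by (2.32), (2.33). Where such a power is strictly positive, I would split the convolution at $s/2$ and use the residual exponential decay on the range $r\ge s/2$ to bound that part uniformly in $s$, so that no growth in $t$ survives. This yields, with $E_1=\max_j E_{1,\alpha_j}$, $E_2=\max_j E_{2,\beta_j}$ and $D=\|u_0\|_{X^{\alpha_0}}+\|\theta_0\|_{Y^{\beta_0}}$,
\begin{equation*}
E_1(t)\le C_0 D+C_1 E_1(t)^2+C_2 E_2(t),\qquad E_2(t)\le C_0 D+C_3 E_1(t)E_2(t)+C_4 E_1(t)^2.
\end{equation*}

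Finally I would close the system. While $C_3 E_1(t)\le 1/2$ the second inequality gives $E_2(t)\le 2C_0 D+2C_4 E_1(t)^2$; substituting into the first produces a scalar quadratic inequality $E_1(t)\le\tilde{C}_0 D+\tilde{C}_1 E_1(t)^2$. Choosing $\varepsilon$ so small that $4\tilde{C}_0\tilde{C}_1\varepsilon<1$ and $2\tilde{C}_0 C_3\varepsilon\le 1/2$, this inequality forces $E_1(t)$ to stay off the interval between the two roots of $\tilde{C}_1 x^2-x+\tilde{C}_0 D$. Since $\alpha_j>\alpha_0$ and $\beta_j>\beta_0$, Lemma 2.3 (equivalently (2.13), (2.14)) gives $E_1(0^+)=E_2(0^+)=0$, while $E_1$ is continuous and nondecreasing on the interval of existence; hence $E_1(t)$ remains below the smaller root, so $E_1(t)\le 2\tilde{C}_0 D$ and then $E_2(t)\le CD$ for all admissible $t$. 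One further application of the integral estimates, now with these bounds inserted, gives (3.31), (3.32) for every $\alpha_0\le\alpha<1$, $\beta_0\le\beta<1$; combined with Theorem 2.1 and the standard continuation argument — the bound prevents $\|u(t)\|_{X^{\alpha_0}}$ and $\|\theta(t)\|_{Y^{\beta_0}}$ from blowing up — this extends the solution to $[0,\infty)$. I expect the main obstacle to be precisely the exponential bookkeeping of the three rates $\lambda,\lambda_1,\lambda_2$ together with the split-at-$s/2$ device that converts the admissible nonnegative powers of $t$ into bounds uniform in $t$; the algebraic closure of the coupled system is then routine.
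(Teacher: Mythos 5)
Your proposal follows essentially the same route as the paper: the same weighted quantities $E_{1,\alpha}$, $E_{2,\beta}$ with the auxiliary rates $\lambda<\lambda_{1}<\Lambda_{1}$, $\lambda<\lambda_{2}<\min\{2\lambda,\lambda_{1}\}$, the same application of (2.34), (2.35) leading to the coupled system $E_{1}\leq C(D+E_{1}^{2}+E_{2})$, $E_{2}\leq C(D+E_{1}E_{2}+E_{1}^{2})$, and the same closure by the quadratic inequality $E\leq C(D+E^{2})$ together with continuity of $E$ and $E(0^{+})=0$ under a smallness condition on the data. The only differences are cosmetic (your split at $s/2$ versus the paper's direct bound on $t^{c}e^{-\lambda t}$ with $c\geq 0$, and your bootstrap on $C_{3}E_{1}\leq 1/2$ versus the paper's direct substitution into a single quadratic inequality), so the argument is correct as proposed.
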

\begin{proof}
It follows from $\mathrm{(II)}_{1}$, (2.34) that 
\begin{equation*}
\begin{split}
t^{\alpha-\alpha_{0}}e^{\lambda t}\|u(t)\|_{X^{\alpha}}\leq& C_{A,\alpha-\alpha_{0},\lambda_{1}}e^{-(\lambda_{1}-\lambda)t}\|u_{0}\|_{X^{\alpha_{0}}} \\
&+C_{A,\alpha+\delta_{1},\lambda_{1}}C_{1}t^{\alpha-\alpha_{0}}e^{-(\lambda_{1}-\lambda)t}E_{1,\alpha_{1}}(t)^{2} \\
&\times \int^{t}_{0}(t-s)^{-(\alpha+\delta_{1})}s^{-2(\alpha_{1}-\alpha_{0})}e^{-(2\lambda-\lambda_{1})s}ds \\
&+C_{A,\alpha,\lambda_{1}}C_{4}L_{f}t^{\alpha-\alpha_{0}}e^{-(\lambda_{1}-\lambda)t}E_{2,\beta_{1}}(t) \\
&\times \int^{t}_{0}(t-s)^{-\alpha}s^{-(\beta_{1}-\beta_{0})}e^{-(\lambda_{2}-\lambda_{1})s}ds \\
\leq& C_{A,\alpha-\alpha_{0},\lambda_{1}}\|u_{0}\|_{X^{\alpha_{0}}}+Ct^{1+\alpha_{0}-2\alpha_{1}-\delta_{1}}e^{-\lambda t}E_{1,\alpha_{1}}(t)^{2} \\
&+CL_{f}t^{1+\beta_{0}-\alpha_{0}-\beta_{1}}e^{-(\lambda_{2}-\lambda)t}E_{2,\beta_{1}}(t),
\end{split}
\end{equation*}
\begin{equation}
E_{1,\alpha}(t)\leq C(\|u_{0}\|_{X^{\alpha_{0}}}+E_{1,\alpha_{1}}(t)^{2}+L_{f}E_{2,\beta_{1}}(t))
\end{equation}
for any $\alpha_{0}\leq \alpha<1-\delta_{1}$, $t>0$.
Similarly to (3.33), we can utilize $\mathrm{(II)}_{2}$, (2.35) to obtain the following inequality:
\begin{equation*}
\begin{split}
t^{\beta-\beta_{0}}e^{\lambda_{2}t}\|\theta(t)\|_{Y^{\beta}}\leq& C_{B,\beta-\beta_{0},\lambda_{1}}e^{-(\lambda_{1}-\lambda_{2})t}\|\theta_{0}\|_{Y^{\beta_{0}}} \\
&+C_{B,\beta+\delta_{2},\lambda_{1}}C_{2}t^{\beta-\beta_{0}}e^{-(\lambda_{1}-\lambda_{2})t}E_{1,\alpha_{2}}(t)E_{2,\beta_{2}}(t) \\
&\times \int^{t}_{0}(t-s)^{-(\beta+\delta_{2})}s^{-(\alpha_{2}-\alpha_{0})-(\beta_{2}-\beta_{0})}e^{-(\lambda+\lambda_{2}-\lambda_{1})s}ds \\
&+C_{B,\beta,\lambda_{1}}C_{3}t^{\beta-\beta_{0}}e^{-(\lambda_{1}-\lambda_{2})t}E_{1,\alpha_{2}}(t)^{2} \\
&\times \int^{t}_{0}(t-s)^{-\beta}s^{-2(\alpha_{2}-\alpha_{0})}e^{-(2\lambda-\lambda_{1})s}ds \\
\leq& C_{B,\beta-\beta_{0},\lambda_{1}}\|\theta_{0}\|_{Y^{\beta_{0}}}+Ct^{1+\alpha_{0}-\alpha_{2}-\beta_{2}-\delta_{2}}e^{-\lambda t}E_{1,\alpha_{2}}(t)E_{2,\beta_{2}}(t) \\
&+Ct^{1+2\alpha_{0}-\beta_{0}-2\alpha_{2}}e^{-(2\lambda-\lambda_{2})t}E_{1,\alpha_{2}}(t)^{2},
\end{split}
\end{equation*}
\begin{equation}
E_{2,\beta}(t)\leq C(\|\theta_{0}\|_{Y^{\beta_{0}}}+E_{1,\alpha_{2}}(t)E_{2,\beta_{2}}(t)+E_{1,\alpha_{2}}(t)^{2})
\end{equation}
for any $\beta_{0}\leq \beta<1-\delta_{2}$, $t>0$.
Set $E(t)=\max\{E_{1,\alpha}(t), E_{2,\beta}(t) \ ; \ \alpha=\alpha_{1}, \alpha_{2}, \beta=\beta_{1}, \beta_{2}\}$.
Then (3.33), (3.34) yield the following inequality:
\begin{equation}
E(t)\leq C\{(\|u_{0}\|_{X^{\alpha_{0}}}+\|\theta_{0}\|_{Y^{\beta_{0}}})+E(t)^{2}\}
\end{equation}
for any $t>0$.
An elementary calculation shows that
\begin{equation}
E(t)\leq C(\|u_{0}\|_{X^{\alpha_{0}}}+\|\theta_{0}\|_{Y^{\beta_{0}}})
\end{equation}
for any $t>0$ provided that $\|u_{0}\|_{X^{\alpha_{0}}}$ and $\|\theta_{0}\|_{Y^{\beta_{0}}}$ are sufficiently small.
Therefore, it is clear from (3.36) that (3.31), (3.32) are established by (3.33), (3.34).
\end{proof}

\section{Proof of Theorems 2.3 and 2.4}
We will prove Theorems 2.3 and 2.4 in this section.
Since the proof of Theorem 2.4 is essentially the same as the proof of Theorem 2.3, we have only to prove Theorem 2.3.
Moreover, in proving Theorem 2.3, we restrict ourselves to the case where $\delta_{1}=0$, $\delta_{2}=0$.
Even if $\delta_{1}>0$ or $\delta_{2}>0$, it is sufficient for Theorem 2.3 that we slightly modify the argument in this section.

\subsection{$X^{\alpha}\times Y^{\beta}$-estimates for integrals}
Theorems 2.3 and 2.4 are established by the following lemmas:
\begin{lemma}[9, Lemma 3.4]
Let
\begin{equation}
\mathcal{F}(t)=\int^{t}_{0}e^{-(t-s)A}F(s)ds
\end{equation}
with $F \in C((0,T];L^{p}_{\sigma}(\Omega))$ satisfying
\begin{equation}
\|F(t)\|_{p}\leq C_{F}t^{-a}
\end{equation}
for any $0<t<t+h\leq T$, where $C_{F}$ is a positive constant, $0\leq a<1$.
Then
\begin{itemize}
\item[\rm{(i)}]For any $0\leq \alpha<1$, $0<\tilde{\alpha}<1-\alpha$,
\begin{equation*}
\mathcal{F} \in C^{0,\tilde{\alpha}}((0,T];X^{\alpha}).
\end{equation*}
\item[\rm{(ii)}]For any $0<t<t+h\leq T$,
\begin{equation}
\|\mathcal{F}(t+h)-\mathcal{F}(t)\|_{X^{\alpha}}\leq L_{\mathcal{F}}C_{F}(h^{1-\alpha}t^{-a}+h^{\tilde{\alpha}}t^{1-\alpha-\tilde{\alpha}-a}),
\end{equation}
where $L_{\mathcal{F}}=L_{\mathcal{F}}(\alpha,\tilde{\alpha})$ is a positive constant.
\end{itemize}
\end{lemma}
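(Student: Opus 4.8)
The plan is to estimate the increment $\mathcal{F}(t+h)-\mathcal{F}(t)$ directly and to read off both the Hölder continuity (i) and the quantitative bound (ii) from a single computation; in fact (i) will be an immediate consequence of (ii). First I would record that $\mathcal{F}(t)\in X^{\alpha}=D(A^{\alpha})$ for each $0<t\le T$: pulling the closed operator $A^{\alpha}$ through the Bochner integral and using (2.1) together with (4.2) gives $\|A^{\alpha}\mathcal{F}(t)\|_{p}\le CC_{F}\int_{0}^{t}(t-s)^{-\alpha}s^{-a}\,ds=CC_{F}B(1-\alpha,1-a)\,t^{1-\alpha-a}<\infty$, the Beta integral converging because $\alpha<1$ and $a<1$. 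This is the basic integrability fact on which everything rests.

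The main step is to split the increment, for $0<t<t+h\le T$, as
\begin{equation*}
\mathcal{F}(t+h)-\mathcal{F}(t)=\int_{t}^{t+h}e^{-(t+h-s)A}F(s)\,ds+\int_{0}^{t}\left(e^{-(t+h-s)A}-e^{-(t-s)A}\right)F(s)\,ds=:I_{1}+I_{2}.
\end{equation*}
For $I_{1}$ I would apply $A^{\alpha}$, use (2.1) in the form $\|A^{\alpha}e^{-(t+h-s)A}F(s)\|_{p}\le C(t+h-s)^{-\alpha}\|F(s)\|_{p}$, and bound $\|F(s)\|_{p}\le C_{F}s^{-a}\le C_{F}t^{-a}$ on $[t,t+h]$ (valid since $a\ge0$ and $s\ge t$). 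The remaining integral $\int_{t}^{t+h}(t+h-s)^{-\alpha}\,ds=h^{1-\alpha}/(1-\alpha)$ converges because $\alpha<1$, producing the first term $h^{1-\alpha}t^{-a}$.

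For $I_{2}$ the decisive trick is the factorization $e^{-(t+h-s)A}-e^{-(t-s)A}=(e^{-hA}-I)e^{-(t-s)A}$. Applying $A^{\alpha}$, then (2.3) with exponent $\tilde{\alpha}$ (legitimate since $0<\tilde{\alpha}<1-\alpha\le1$), and finally (2.1) with exponent $\alpha+\tilde{\alpha}$, I would estimate the integrand by
\begin{equation*}
\|A^{\alpha}(e^{-hA}-I)e^{-(t-s)A}F(s)\|_{p}\le Ch^{\tilde{\alpha}}\|A^{\alpha+\tilde{\alpha}}e^{-(t-s)A}F(s)\|_{p}\le CC_{F}h^{\tilde{\alpha}}(t-s)^{-(\alpha+\tilde{\alpha})}s^{-a}.
\end{equation*}
Integrating in $s$ yields the Beta integral $\int_{0}^{t}(t-s)^{-(\alpha+\tilde{\alpha})}s^{-a}\,ds=B(1-\alpha-\tilde{\alpha},1-a)\,t^{1-\alpha-\tilde{\alpha}-a}$, which converges precisely because $\alpha+\tilde{\alpha}<1$ and $a<1$; this gives the second term $h^{\tilde{\alpha}}t^{1-\alpha-\tilde{\alpha}-a}$. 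Adding the two bounds proves (ii) with a constant $L_{\mathcal{F}}=L_{\mathcal{F}}(\alpha,\tilde{\alpha})$. Then (i) follows from (ii): on any $[\varepsilon,T]\subset(0,T]$ the right-hand side is dominated by $CC_{F}(\varepsilon)(h^{1-\alpha}+h^{\tilde{\alpha}})\le C'h^{\tilde{\alpha}}$ for $0<h\le T$, since $1-\alpha>\tilde{\alpha}$, whence $\mathcal{F}\in C^{0,\tilde{\alpha}}([\varepsilon,T];X^{\alpha})$ for every $\varepsilon$ and thus $\mathcal{F}\in C^{0,\tilde{\alpha}}((0,T];X^{\alpha})$.

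The hard part, such as it is, lies in the treatment of $I_{2}$: one must avoid the crude route of writing $e^{-(t+h-s)A}-e^{-(t-s)A}=-\int_{t-s}^{t+h-s}Ae^{-\tau A}\,d\tau$ and estimating with $\tau^{-(1+\alpha)}$, which does not produce the clean gain $h^{\tilde{\alpha}}$; instead the factorization combined with the fractional smoothing estimate (2.3) is what distributes a full power $h^{\tilde{\alpha}}$ against the integrable singularity $(t-s)^{-(\alpha+\tilde{\alpha})}$. The only other point requiring care is the justification, at the outset, of interchanging the closed operator $A^{\alpha}$ with the Bochner integrals, which is standard for analytic semigroups once the above Beta-integral bounds guarantee absolute integrability.
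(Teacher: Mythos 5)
Your proof is correct, and it is the standard argument: the paper itself gives no proof of this lemma, citing it as Lemma 3.4 of Hishida \cite{Hishida}, and your computation (splitting the increment, bounding the short integral via (2.1), and handling the tail via the factorization $(e^{-hA}-I)e^{-(t-s)A}$ combined with (2.3) and (2.1) at order $\alpha+\tilde{\alpha}$) is precisely the expected route, with (i) correctly deduced from (ii) using $h^{1-\alpha}\leq C h^{\tilde{\alpha}}$ on bounded intervals. No gaps.
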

\begin{lemma}[3, Lemmas 2.13 and 2.14, 9, Lemma 3.5]
Let $\mathcal{F}$ be a integral given by $(4.1)$ with $F \in C((0,T];L^{p}_{\sigma}(\Omega))$ satisfying $(4.2)$ and
\begin{equation}
\|F(t+h)-F(t)\|_{p}\leq L_{F}h^{b}t^{-c}
\end{equation}
for any $0<t<t+h\leq T$, where $L_{F}$ is positive constant, $0<b\leq 1$, $c>0$.
Then
\begin{itemize}
\item[\rm{(i)}]For any $0<\hat{\alpha}<b$, $0\leq \alpha<b$, $0<\tilde{\alpha}<b-\alpha$,
\begin{equation*}
\mathcal{F} \in C^{0,\hat{\alpha}}((0,T];X^{1}), \ d_{t}\mathcal{F} \in C^{0,\tilde{\alpha}}((0,T];X^{\alpha}).
\end{equation*}
\item[\rm{(ii)}]For any $0<t\leq T$,
\begin{equation*}
d_{t}\mathcal{F}(t)+A\mathcal{F}(t)=F(t).
\end{equation*}
\item[\rm{(iii)}]For any $0<t\leq T$,
\begin{equation}
\|\mathcal{F}(t)\|_{X^{1}}\leq C_{1,\mathcal{F}}(C_{F}t^{-a}+L_{F}t^{b-c}),
\end{equation}
where $C_{1,\mathcal{F}}=C_{1,\mathcal{F}}(b,c)$ is a positive constant.
\item[\rm{(iv)}]For any $0\leq \alpha< b$, $0<t\leq T$,
\begin{equation}
\|d_{t}\mathcal{F}(t)\|_{X^{\alpha}}\leq C_{2,\mathcal{F}}(C_{F}t^{-(\alpha+a)}+L_{F}t^{b-(\alpha+c)}),
\end{equation}
where $C_{2,\mathcal{F}}=C_{2,\mathcal{F}}(\alpha,b,c)$ is a positive constant.
\end{itemize}
\end{lemma}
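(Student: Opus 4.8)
The plan is to treat $\mathcal{F}$ as the mild solution of the inhomogeneous problem $d_{t}\mathcal{F}+A\mathcal{F}=F$ with $\mathcal{F}(0)=0$, and to exploit the Hölder hypothesis (4.4) through the classical frozen‑coefficient decomposition
$$\mathcal{F}(t)=\int^{t}_{0}e^{-(t-s)A}(F(s)-F(t))\,ds+A^{-1}(I-e^{-tA})F(t).$$
This is legitimate: $A$ is invertible with spectral bound $\Lambda_{1}>0$ (consistent with the decay in (2.1)), so $\int^{t}_{0}e^{-\sigma A}\,d\sigma=A^{-1}(I-e^{-tA})$. The purpose of subtracting the frozen value $F(t)$ is that the first integrand now carries the factor $F(s)-F(t)$, to which (4.4) applies with increment $h=t-s$, giving $\|F(s)-F(t)\|_{p}\leq L_{F}(t-s)^{b}s^{-c}$; this gained power $(t-s)^{b}$ is exactly what renders the singular kernels $Ae^{-(t-s)A}$ and $A^{1+\alpha}e^{-(t-s)A}$ integrable near $s=t$.

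For the $X^{1}$‑bound (iii) I would apply $A$ to the decomposition. Since $A$ is closed and, by (2.1) with exponent $1$ together with (4.4), the integral $\int^{t}_{0}Ae^{-(t-s)A}(F(s)-F(t))\,ds$ converges absolutely, one gets $\mathcal{F}(t)\in D(A)=X^{1}$ and
$$A\mathcal{F}(t)=\int^{t}_{0}Ae^{-(t-s)A}(F(s)-F(t))\,ds+(I-e^{-tA})F(t).$$
The first term is controlled by the beta integral $L_{F}\int^{t}_{0}(t-s)^{b-1}s^{-c}\,ds=L_{F}t^{b-c}B(b,1-c)$, convergent at the upper endpoint because $b>0$, while the second is bounded by $C\|F(t)\|_{p}\leq CC_{F}t^{-a}$ via (4.2); together these give (4.5). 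The identity (ii) is the standard statement that a Hölder‑continuous forcing term produces a classical solution, so $\mathcal{F}\in C^{1}$ and $d_{t}\mathcal{F}=F-A\mathcal{F}$; combining this with the displayed formula for $A\mathcal{F}$ yields the explicit representation
$$d_{t}\mathcal{F}(t)=e^{-tA}F(t)-\int^{t}_{0}Ae^{-(t-s)A}(F(s)-F(t))\,ds.$$
Applying $A^{\alpha}$ and invoking (2.1) with exponents $\alpha$ and $1+\alpha$ reduces the estimate for (iv) to $\|A^{\alpha}e^{-tA}F(t)\|_{p}\leq CC_{F}t^{-(\alpha+a)}$ plus the beta integral $L_{F}\int^{t}_{0}(t-s)^{b-1-\alpha}s^{-c}\,ds=L_{F}t^{b-\alpha-c}B(b-\alpha,1-c)$; convergence at $s=t$ forces precisely the hypothesis $\alpha<b$, and the two contributions are exactly the summands in (4.6).

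The remaining content is the Hölder continuity in (i). The subcritical part $\mathcal{F}\in C^{0,\tilde{\alpha}}((0,T];X^{\alpha})$ for $\alpha<1$ is already supplied by Lemma 4.1 using (4.2) alone, so the genuinely new regularity is the endpoint statement $\mathcal{F}\in C^{0,\hat{\alpha}}((0,T];X^{1})$ and the Hölder continuity of the derivative $d_{t}\mathcal{F}\in C^{0,\tilde{\alpha}}((0,T];X^{\alpha})$. For both I would estimate the increments $A\mathcal{F}(t+h)-A\mathcal{F}(t)$ and $d_{t}\mathcal{F}(t+h)-d_{t}\mathcal{F}(t)$ by splitting each integral over $[0,t]$ and $[t,t+h]$ and rewriting the kernel difference as $e^{-(t+h-s)A}-e^{-(t-s)A}=-\int^{t+h}_{t}Ae^{-(\tau-s)A}\,d\tau$, bounding the resulting double integrals with (2.1) and (4.4). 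I expect this final step to be the main obstacle: the derivative increment mixes the strongly singular kernel $A^{1+\alpha}e^{-(t-s)A}$ with two distinct frozen‑value subtractions, $F(s)-F(t)$ versus $F(s)-F(t+h)$, so one must carefully track several competing powers of $h$, $(t-s)$, and $s$ across the different regions and verify that every piece is dominated by a bound of the form $h^{\tilde{\alpha}}t^{(\cdots)}$ with $\tilde{\alpha}<b-\alpha$. By contrast, the boundedness estimates (iii), (iv) and the identity (ii) are routine adaptations of the standard analytic‑semigroup regularity theory.
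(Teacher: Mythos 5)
The paper offers no proof of this lemma at all: it is imported from \cite{Fujita} (Lemmas 2.13 and 2.14) and \cite{Hishida} (Lemma 3.5), so the only comparison available is with the classical argument in those sources --- and your proposal is exactly that argument. The frozen-coefficient splitting $\mathcal{F}(t)=\int^{t}_{0}e^{-(t-s)A}(F(s)-F(t))\,ds+A^{-1}(I-e^{-tA})F(t)$ is legitimate (zero lies in the resolvent set of $A$, consistently with the exponential decay in $(2.1)$), and your derivations of (ii), (iii) and (iv) from it are complete and correct: closedness of $A$, the smoothing estimate $(2.1)$ applied with exponents $1$, $\alpha$ and $1+\alpha$, the H\"{o}lder hypothesis $(4.4)$ with increment $h=t-s$, and the beta integrals $\int^{t}_{0}(t-s)^{b-1}s^{-c}\,ds$ and $\int^{t}_{0}(t-s)^{b-\alpha-1}s^{-c}\,ds$ reproduce the two summands of $(4.5)$ and $(4.6)$, with $\alpha<b$ appearing exactly where you say it does. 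One minor caveat: these beta integrals converge at $s=0$ only if $c<1$, a restriction the lemma does not state (it assumes only $c>0$) but which holds in every application the paper makes; you use it silently, as does the statement itself.

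The one substantive shortfall is part (i). You correctly observe that the subcritical H\"{o}lder continuity is already covered by Lemma 4.1 and that the new content is $\mathcal{F}\in C^{0,\hat{\alpha}}((0,T];X^{1})$ together with $d_{t}\mathcal{F}\in C^{0,\tilde{\alpha}}((0,T];X^{\alpha})$, and you name the right tools (splitting the increment over $[0,t]$ and $[t,t+h]$, the identity $e^{-(t+h-s)A}-e^{-(t-s)A}=-\int^{t+h}_{t}Ae^{-(\tau-s)A}\,d\tau$, and estimate $(2.1)$). But you do not carry out these estimates; you explicitly defer them as ``the main obstacle.'' Since this computation is precisely the content of \cite{Fujita}, Lemma 2.14, the omission does not cast doubt on the result, but as a self-contained proof your write-up is incomplete on exactly that point.
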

We remark that the regularity lemmas similar to Lemmas 4.1 and 4.2 are still valid for $B$, $G$ and $\mathcal{G}$ instead of $A$, $F$ and $\mathcal{F}$ respectively.

It is useful for the time derivative of strong solutions of (1.1), (1.2) to be stated the following generalized Gronwall lemma:
\begin{lemma}[9, Remark to Lemma 3.6]
Let $y$ be a nonnegative continuous and integrable function in $(0,T]$ satisfying
\begin{equation}
y(t)\leq \sum^{l}_{i=1}a_{i}t^{-\alpha_{i}}+\sum^{m}_{j=1}b_{j}\int^{t}_{0}(t-s)^{-\beta_{j}}y(s)ds
\end{equation}
for any $0<t\leq T$, where $a_{i}>0$, $b_{j}>0$, $0\leq \alpha_{i}<1$, $0\leq \beta_{j}<1$.
Then
\begin{equation}
y(t)\leq C\sum^{l}_{i=1}a_{i}t^{-\alpha_{i}}(1+B_{n_{\beta}+1}(t)e^{CB_{n_{\beta}+1}(t)})\sum^{n_{\beta}}_{k=0}B_{k}(t)
\end{equation}
for any $0<t\leq T$, where $C=C(\alpha_{1},\cdots,\alpha_{l},\beta_{1},\cdots,\beta_{m})$ is a positive constant, $n_{\beta}=[\beta/(1-\beta)]+1$, $\beta=\max\{\beta_{j} \ ; \ j=1,\cdots,m\}$,
\begin{equation*}
B_{k}(t)=\left(\sum^{m}_{j=1}b_{j}(t)\right)^{k}, \ b_{j}(t)=b_{j}t^{1-\beta_{j}}.
\end{equation*}
\end{lemma}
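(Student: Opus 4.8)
The plan is to treat the inequality as an iteration (Neumann-series) estimate. Write $a(t)=\sum_{i=1}^{l}a_{i}t^{-\alpha_{i}}$ and introduce the positivity-preserving linear operator
\[
(\mathcal{K}\phi)(t)=\sum_{j=1}^{m}b_{j}\int_{0}^{t}(t-s)^{-\beta_{j}}\phi(s)\,ds,
\]
so that the hypothesis reads $y\leq a+\mathcal{K}y$ on $(0,T]$. Since $\mathcal{K}$ has a nonnegative kernel it is monotone, and feeding the inequality into itself $N$ times yields
\[
y(t)\leq\sum_{k=0}^{N-1}(\mathcal{K}^{k}a)(t)+(\mathcal{K}^{N}y)(t)
\]
for every $N\geq 1$ and $0<t\leq T$. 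The whole proof then reduces to three tasks: estimating the iterates $\mathcal{K}^{k}a$, summing them, and showing the remainder $\mathcal{K}^{N}y$ tends to $0$.

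For the iterates, the basic computation is one application of $\mathcal{K}$ to a pure power. Using the beta integral $\int_{0}^{t}(t-s)^{-\beta_{j}}s^{-\alpha}\,ds=B(1-\beta_{j},1-\alpha)\,t^{1-\beta_{j}-\alpha}$, valid because $\alpha,\beta_{j}<1$, one finds $\mathcal{K}(s^{-\alpha})(t)=\sum_{j}B(1-\beta_{j},1-\alpha)\,b_{j}(t)\,t^{-\alpha}$, where $b_{j}(t)=b_{j}t^{1-\beta_{j}}$ is exactly the scaling factor appearing in the statement. Thus each application multiplies by a factor of order $b_{j}(t)$ and lowers the singularity exponent from $\alpha$ to $\alpha-(1-\beta_{j})$. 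Iterating $k$ times and expanding over multi-indices $(j_{1},\dots,j_{k})$, each term is a product $\prod_{r}b_{j_{r}}(t)$ times a product of beta functions times $t^{-\alpha_{i}}$; summing $\prod_{r}b_{j_{r}}(t)$ over all multi-indices produces precisely $B_{k}(t)=\bigl(\sum_{j}b_{j}(t)\bigr)^{k}$. Hence $(\mathcal{K}^{k}a)(t)\leq C_{k}\,a(t)\,B_{k}(t)$ with constants $C_{k}$ coming from the accumulated beta functions.

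The heart of the argument, and the step I expect to be the main obstacle, is controlling the $C_{k}$ uniformly in $k$ so that the series converges with the sharp growth claimed. Writing $B(x,y)=\Gamma(x)\Gamma(y)/\Gamma(x+y)$ and tracking how the second argument grows along the iteration, the product of beta functions after $k$ steps telescopes to an expression of order $\Gamma(\,\cdot\,)^{k}/\Gamma(k(1-\beta)+\,\cdot\,)$ with $\beta=\max_{j}\beta_{j}$, and the factorial growth of the denominator forces $C_{k}\to0$ fast enough that $\sum_{k}C_{k}B_{k}(t)$ converges. The threshold at which the iterated kernel exponent $k(1-\beta)-1$ turns nonnegative, namely $k\geq n_{\beta}$ with $n_{\beta}=[\beta/(1-\beta)]+1$, is exactly where the kernel ceases to be singular; this is what separates the polynomial prefactor $\sum_{k=0}^{n_{\beta}}B_{k}(t)$ from the exponentially summed tail $B_{n_{\beta}+1}(t)e^{CB_{n_{\beta}+1}(t)}$ in the conclusion.

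Finally, to dispose of the remainder I would use the hypothesis that $y$ is integrable on $(0,T]$ together with the observation that, once $N\geq n_{\beta}$, the kernel of $\mathcal{K}^{N}$ is bounded (non-singular) and its size decays factorially in $N$ by the same beta-function estimate; hence $(\mathcal{K}^{N}y)(t)\to0$ as $N\to\infty$ for each fixed $t$. Passing to the limit in the iterated inequality gives $y(t)\leq\sum_{k=0}^{\infty}(\mathcal{K}^{k}a)(t)$, and the splitting at $k=n_{\beta}$ described above converts this series into the stated bound, with the constant $C$ depending only on $\alpha_{1},\dots,\alpha_{l},\beta_{1},\dots,\beta_{m}$ as required.
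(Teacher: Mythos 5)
The paper does not actually prove this lemma: it is imported verbatim from Hishida \cite{Hishida} (``Remark to Lemma 3.6''), so there is no internal proof to compare against. Your iteration (Neumann-series) argument is the standard proof of the singular Gronwall inequality and is correct in outline: the beta-integral identity for $\mathcal{K}(s^{-\alpha})$, the telescoping of the product of beta functions via $B(x,y)=\Gamma(x)\Gamma(y)/\Gamma(x+y)$ into a single $\Gamma\bigl(1-\alpha+\sum_{r}(1-\beta_{j_{r}})\bigr)$ in the denominator, the resulting factorial decay of the constants $C_{k}$, and the disposal of the remainder $\mathcal{K}^{N}y$ using integrability of $y$ once the iterated kernel becomes bounded are exactly the right ingredients. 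Two points deserve slightly more care than your sketch gives them. First, the non-singularity threshold for the iterated kernel is $N(1-\beta)\geq 1$, i.e.\ $N\geq n_{\beta}+1$ rather than $N\geq n_{\beta}$ (note $(1-\beta)(n_{\beta}+1)\geq 1$ but $(1-\beta)n_{\beta}$ need not be $\geq 1$); this is harmless but is precisely why $n_{\beta}+1$ appears in the exponential factor. Second, passing from $\sum_{k}C_{k}B_{k}(t)$ with $C_{k}\lesssim M^{k}/\Gamma(k(1-\beta)+c)$ to the stated form requires grouping the indices $k>n_{\beta}$ into blocks $k=(n_{\beta}+1)\ell+r$ with $0\leq r\leq n_{\beta}$, so that $B_{k}=B_{n_{\beta}+1}^{\ell}B_{r}$ and the $\ell$-sum produces the factor $B_{n_{\beta}+1}(t)e^{CB_{n_{\beta}+1}(t)}$ while the $r$-sum produces $\sum_{k=0}^{n_{\beta}}B_{k}(t)$; you gesture at this but do not carry it out. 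Neither issue is a gap in the idea, only in the bookkeeping, and the argument as planned does yield the lemma with $C$ depending only on the exponents.
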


\subsection{Regularity of mild solutions}
We will show not only that a mild solution of (1.1), (1.2) can be a strong solution but also that (2.19), (2.20) are established.
It can be easily seen from Lemmas 2.5--2.8, (2.11), (2.12) that
\begin{equation}
\|F(u,\theta)(t)\|_{p}\leq C(t^{2(\alpha_{0}-\alpha_{1})}+t^{\beta_{0}-\beta_{1}})(\|u_{0}\|_{X^{\alpha_{0}}}+\|\theta_{0}\|_{Y^{\beta_{0}}}),
\end{equation}
\begin{equation}
\|G(u,\theta)(t)\|_{q}\leq C(t^{\alpha_{0}+\beta_{0}-\alpha_{2}-\beta_{2}}+t^{2(\alpha_{0}-\alpha_{2})})(\|u_{0}\|_{X^{\alpha_{0}}}+\|\theta_{0}\|_{Y^{\beta_{0}}})
\end{equation}
for any $0<t\leq T$.
Since
\begin{equation*}
u(t+h)-u(t)=(e^{-hA}-I)e^{-tA}u_{0}+\mathcal{F}(u,\theta)(t+h)-\mathcal{F}(u,\theta)(t),
\end{equation*}
\begin{equation*}
\theta(t+h)-\theta(t)=(e^{-hB}-I)e^{-tB}\theta_{0}+\mathcal{G}(u,\theta)(t+h)-\mathcal{G}(u,\theta)(t)
\end{equation*}
for any $0<t<t+h\leq T$, it follows from (4.3), (4.9), (4.10) that
\begin{equation}
\begin{split}
\|u(t+h)-u(t)\|_{X^{\alpha}}\leq& C(h^{b_{1}}t^{\alpha_{0}-\alpha-b_{1}}+h^{1-\alpha}t^{2(\alpha_{0}-\alpha_{1})}+h^{1-\alpha}t^{\beta_{0}-\beta_{1}}) \\
&\times (\|u_{0}\|_{X^{\alpha_{0}}}+\|\theta_{0}\|_{Y^{\beta_{0}}}),
\end{split}
\end{equation}
\begin{equation}
\begin{split}
\|\theta(t+h)-\theta(t)\|_{Y^{\beta}}\leq& C(h^{b_{2}}t^{\beta_{0}-\beta-b_{2}}+h^{1-\beta}t^{\alpha_{0}+\beta_{0}-\alpha_{2}-\beta_{2}}+h^{1-\beta}t^{2(\alpha_{0}-\alpha_{2})}) \\
&\times (\|u_{0}\|_{X^{\alpha_{0}}}+\|\theta_{0}\|_{Y^{\beta_{0}}})
\end{split}
\end{equation}
for any $0<b_{1}<1-\alpha$, $0<b_{2}<1-\beta$, $0<t<t+h\leq T$.
It is derived from (4.11) with $\alpha=\alpha_{1}, \alpha_{2}$, (4.12) with $\beta=\beta_{1}, \beta_{2}$ that
\begin{equation}
F(u,\theta) \in C^{0,\hat{\alpha}}((0,T];L^{p}_{\sigma}(\Omega)),
\end{equation}
\begin{equation}
G(u,\theta) \in C^{0,\hat{\beta}}((0,T];L^{q}(\Omega))
\end{equation}
for any $0<\hat{\alpha}<\min\{1-\alpha_{1}, 1-\beta_{1}\}$, $0<\hat{\beta}<\min\{1-\alpha_{2}, 1-\beta_{2}\}$.
Therefore, Lemma 4.2 (i), (ii) admit that $(u,\theta)$ is a strong solution of (1.1), (1.2).
By applying (4.5) to (II), we have the following inequalities:
\begin{equation}
t^{1-\alpha_{0}}\|u(t)\|_{X^{1}}\leq CM_{1}(t)(\|u_{0}\|_{X^{\alpha_{0}}}+\|\theta_{0}\|_{Y^{\beta_{0}}}),
\end{equation}
\begin{equation}
t^{1-\beta_{0}}\|\theta(t)\|_{Y^{1}}\leq CM_{2}(t)(\|u_{0}\|_{X^{\alpha_{0}}}+\|\theta_{0}\|_{Y^{\beta_{0}}})
\end{equation}
for any $0<t\leq T$, where
\begin{equation*}
\begin{split}
M_{1}(t):=&1+t^{1+\alpha_{0}-2\alpha_{1}}+t^{2(1+\alpha_{0}-2\alpha_{1})}+t^{2+\beta_{0}-2\alpha_{1}-\beta_{1}} \\
&+t^{1+\beta_{0}-\alpha_{0}-\beta_{1}}+t^{2+\beta_{0}-\beta_{1}-\alpha_{2}-\beta_{2}}+t^{2+\alpha_{0}-\beta_{1}-2\alpha_{2}},
\end{split}
\end{equation*}
\begin{equation*}
\begin{split}
M_{2}(t):=&1+t^{1+\alpha_{0}-\alpha_{2}-\beta_{2}}+t^{2+2\alpha_{0}-2\alpha_{1}-\alpha_{2}-\beta_{2}}+t^{2+\beta_{0}-\beta_{1}-\alpha_{2}-\beta_{2}} \\
&+t^{1+2\alpha_{0}-\beta_{0}-2\alpha_{2}}+t^{2+3\alpha_{0}-\beta_{0}-2\alpha_{1}-2\alpha_{2}}+t^{2+\alpha_{0}-\beta_{1}-2\alpha_{2}} \\
&+t^{2(1+\alpha_{0}-\alpha_{2}-\beta_{2})}+t^{2+3\alpha_{0}-\beta_{0}-3\alpha_{2}-\beta_{2}}.
\end{split}
\end{equation*}
It is clear from (2.32), (2.33) that (2.19), (2.20) are established by (4.15), (4.16).
\begin{remark}
Theorem $2.3$ $\mathrm{(i)}$ can be, more precisely, stated as follows:
\begin{equation*}
u \in C^{0,\hat{\alpha}}((0,T];X^{1}), \ d_{t}u \in C^{0,\tilde{\alpha}}((0,T];X^{\alpha}),
\end{equation*}
\begin{equation*}
\theta \in C^{0,\hat{\beta}}((0,T];Y^{1}), \ d_{t}\theta \in C^{0,\tilde{\beta}}((0,T];Y^{\beta})
\end{equation*}
for any $0<\hat{\alpha}<\min\{1-\alpha_{1}, 1-\beta_{1}\}$, $0<\hat{\beta}<\min\{1-\alpha_{2}, 1-\beta_{2}\}$, $0\leq\alpha<\min\{1-\alpha_{1}, 1-\beta_{1}\}$, $0\leq\beta<\min\{1-\alpha_{2}, 1-\beta_{2}\}$, $0<\tilde{\alpha}<\min\{1-\alpha_{1}, 1-\beta_{1}\}-\alpha$, $0<\tilde{\beta}<\min\{1-\alpha_{2}, 1-\beta_{2}\}-\beta$.
\end{remark}

\subsection{Regularity of the time derivative of strong solutions}
We will obtain the stronger regularity of strong solutions of (1.1), (1.2) under appropriate assumptions for $p$, $q$, $\alpha_{0}$ and $\beta_{0}$.
Let us remark that $(u,\theta)$ satisfies (3.27) for any $0<\tau<t<T$.
Then it can be easily seen from (3.27) that
\begin{equation}
\begin{split}
u(t+h)-u(t)=&(e^{-hA}-I)e^{-tA}u(\tau) \\
&+\int^{\tau+h}_{\tau}e^{-(t+h-s)A}F(u,\theta)(s)ds \\
&+\int^{t}_{\tau}e^{-(t-s)A}(F(u,\theta)(s+h)-F(u,\theta)(s))ds,
\end{split}
\end{equation}
\begin{equation}
\begin{split}
\theta(t+h)-\theta(t)=&(e^{-hB}-I)e^{-tB}\theta(\tau) \\
&+\int^{\tau+h}_{\tau}e^{-(t+h-s)B}G(u,\theta)(s)ds \\
&+\int^{t}_{\tau}e^{-(t-s)B}(G(u,\theta)(s+h)-G(u,\theta)(s))ds
\end{split}
\end{equation}
for any $\tau<t<t+h\leq T$.
It follows from (2.1), (2.3), (2.19), (4.9) that
\begin{equation*}
\|(e^{-hA}-I)e^{-tA}u(\tau)\|_{X^{\alpha}}\leq Ch(t-\tau)^{-\alpha}\tau^{\alpha_{0}-1}(\|u_{0}\|_{X^{\alpha_{0}}}+\|\theta_{0}\|_{Y^{\beta_{0}}}),
\end{equation*}
\begin{equation*}
\begin{split}
&\int^{t+\tau}_{\tau}\|e^{-(t+h-s)A}F(u,\theta)(s)\|_{X^{\alpha}}ds \\
&\leq C\int^{\tau+h}_{\tau}(t+h-s)^{-\alpha}(s^{2(\alpha_{0}-\alpha_{1})}+s^{\beta_{0}-\beta_{1}})ds(\|u_{0}\|_{X^{\alpha_{0}}}+\|\theta_{0}\|_{Y^{\beta_{0}}}) \\
&\leq Ch(t-\tau)^{-\alpha}(\tau^{2(\alpha_{0}-\alpha_{1})}+\tau^{\beta_{0}-\beta_{1}})(\|u_{0}\|_{X^{\alpha_{0}}}+\|\theta_{0}\|_{Y^{\beta_{0}}})
\end{split}
\end{equation*}
for any $\alpha_{0}\leq \alpha<1$, $\tau<t<t+h\leq T$.
Similarly to $u$ and $F(u,\theta)$, we can utilize (2.2), (2.4), (2.20), (4.10) to obtain that
\begin{equation*}
\|(e^{-hB}-I)e^{-tB}\theta(\tau)\|_{Y^{\beta}}\leq Ch(t-\tau)^{-\beta}\tau^{\beta_{0}-1}(\|u_{0}\|_{X^{\alpha_{0}}}+\|\theta_{0}\|_{Y^{\beta_{0}}}),
\end{equation*}
\begin{equation*}
\begin{split}
&\int^{t+\tau}_{\tau}\|e^{-(t+h-s)B}G(u,\theta)(s)\|_{Y^{\beta}}ds \\
&\leq C\int^{\tau+h}_{\tau}(t+h-s)^{-\beta}(s^{\alpha_{0}+\beta_{0}-\alpha_{2}-\beta_{2}}+s^{2(\alpha_{0}-\alpha_{2})})ds(\|u_{0}\|_{X^{\alpha_{0}}}+\|\theta_{0}\|_{Y^{\beta_{0}}}) \\
&\leq Ch(t-\tau)^{-\beta}(\tau^{\alpha_{0}+\beta_{0}-\alpha_{2}-\beta_{2}}+\tau^{2(\alpha_{0}-\alpha_{2})})(\|u_{0}\|_{X^{\alpha_{0}}}+\|\theta_{0}\|_{Y^{\beta_{0}}})
\end{split}
\end{equation*}
for any $\beta_{0}\leq \beta<1$, $\tau<t<t+h\leq T$.
Therefore, it follows from (4.17), (4.18) that
\begin{equation}
\begin{split}
\|u(t+h)-u(t)\|_{X^{\alpha}}\leq& C_{1}(\tau)h(t-\tau)^{-\alpha}\tau^{\alpha_{0}-1}(\|u_{0}\|_{X^{\alpha_{0}}}+\|\theta_{0}\|_{Y^{\beta_{0}}}) \\
&+C_{A,\alpha,\lambda}\int^{t}_{\tau}(t-s)^{-\alpha}\|F(u,\theta)(s+h)-F(u,\theta)(s)\|_{p}ds,
\end{split}
\end{equation}
\begin{equation}
\begin{split}
\|\theta(t+h)-\theta(t)\|_{Y^{\beta}}\leq& C_{2}(\tau)h(t-\tau)^{-\beta}\tau^{\beta_{0}-1}(\|u_{0}\|_{X^{\alpha_{0}}}+\|\theta_{0}\|_{Y^{\beta_{0}}}) \\
&+C_{B,\beta,\lambda}\int^{t}_{\tau}(t-s)^{-\beta}\|G(u,\theta)(s+h)-G(u,\theta)(s)\|_{q}ds
\end{split}
\end{equation}
for any $\tau<t<t+h\leq T$, where
\begin{equation*}
C_{1}(\tau):=C(1+\tau^{1+\alpha_{0}-2\alpha_{1}}+\tau^{1+\beta_{0}-\alpha_{0}-\beta_{1}}),
\end{equation*}
\begin{equation*}
C_{2}(\tau):=C(1+\tau^{1+\alpha_{0}-\alpha_{2}-\beta_{2}}+\tau^{1+2\alpha_{0}-\beta_{0}-2\alpha_{2}}).
\end{equation*}
Moreover, we can obtain $L^{p}_{\sigma}$-estimates for $F(u,\theta)(t+h)-F(u,\theta)(t)$ and $L^{q}$-estimates for $G(u,\theta)(t+h)-G(u,\theta)(t)$ with the aid of (4.19), (4.20), consequently,
\begin{equation}
\begin{split}
\|F&(u,\theta)(t+h)-F(u,\theta)(t)\|_{p} \\
\leq& Ch\{(t-\tau)^{-\alpha_{1}}\tau^{2\alpha_{0}-\alpha_{1}-1}+(t-\tau)^{-\beta_{1}}\tau^{\beta_{0}-1}\}(\|u_{0}\|_{X^{\alpha_{0}}}+\|\theta_{0}\|_{Y^{\beta_{0}}}) \\
&+C\tau^{\alpha_{0}-\alpha_{1}}\int^{t}_{\tau}(t-s)^{-\alpha_{1}}\|F(u,\theta)(s+h)-F(u,\theta)(s)\|_{p}ds \\
&+C\int^{t}_{\tau}(t-s)^{-\beta_{1}}\|G(u,\theta)(s+h)-G(u,\theta)(s)\|_{q}ds,
\end{split}
\end{equation}
\begin{equation}
\begin{split}
\|G&(u,\theta)(t+h)-G(u,\theta)(t)\|_{q} \\
\leq& Ch\{(t-\tau)^{-\alpha_{2}}(\tau^{2\alpha_{0}-\alpha_{2}-1}+\tau^{\alpha_{0}+\beta_{0}-\beta_{2}-1})+(t-\tau)^{-\beta_{2}}\tau^{\alpha_{0}+\beta_{0}-\alpha_{2}-1}\} \\
&\times (\|u_{0}\|_{X^{\alpha_{0}}}+\|\theta_{0}\|_{Y^{\beta_{0}}}) \\
&+C(\tau^{\alpha_{0}-\alpha_{2}}+\tau^{\beta_{0}-\beta_{2}})\int^{t}_{\tau}(t-s)^{-\alpha_{2}}\|F(u,\theta)(s+h)-F(u,\theta)(s)\|_{p}ds \\
&+C\tau^{\alpha_{0}-\alpha_{2}}\int^{t}_{\tau}(t-s)^{-\beta_{2}}\|G(u,\theta)(s+h)-G(u,\theta)(s)\|_{q}ds
\end{split}
\end{equation}
for any $\tau<t<t+h\leq T$.
Let $p$, $q$, $\alpha_{0}$ and $\beta_{0}$ satisfy (2.17), and set
\begin{equation*}
y(t)=\|F(u,\theta)(t+h)-F(u,\theta)(t)\|_{p}+\|G(u,\theta)(t+h)-G(u,\theta)(t)\|_{q}.
\end{equation*}
By applying Lemma 4.3 for $(\tau,T-h]$ instead of $(0,T]$ to (4.21), (4.22) and letting $\tau=t/2$, we have the following inequality:
\begin{equation}
y(t)\leq ChM(t)(\|u_{0}\|_{X^{\alpha_{0}}}+\|\theta_{0}\|_{Y^{\beta_{0}}})
\end{equation}
for any $0<t\leq T-h$, where
\begin{equation*}
M(t):=t^{2(\alpha_{0}-\alpha_{1})-1}+t^{\beta_{0}-\beta_{1}-1}+t^{\alpha_{0}+\beta_{0}-\alpha_{2}-\beta_{2}-1}+t^{2(\alpha_{0}-\alpha_{2})-1}.
\end{equation*}
It is clear from (4.23) that
\begin{equation*}
F(u,\theta) \in C^{0,1}((0,T];L^{p}_{\sigma}(\Omega)),
\end{equation*}
\begin{equation*}
G(u,\theta) \in C^{0,1}((0,T];L^{q}(\Omega)).
\end{equation*}
Therefore, Lemma 4.2 (i) yields Theorem 2.3 (ii).
Let $p$, $q$, $\alpha_{0}$ and $\beta_{0}$ satisfy (2.18) in addition to (2.17).
By applying (4.6) to (II), it follows from (4.9), (4.10), (4.23) that
\begin{equation}
t^{1+\alpha-\alpha_{0}}\|d_{t}u(t)\|_{X^{\alpha}}\leq CM_{1}(t)(\|u_{0}\|_{X^{\alpha_{0}}}+\|\theta_{0}\|_{Y^{\beta_{0}}}),
\end{equation}
\begin{equation}
t^{1+\beta-\beta_{0}}\|d_{t}\theta(t)\|_{Y^{\beta}}\leq CM_{2}(t)(\|u_{0}\|_{X^{\alpha_{0}}}+\|\theta_{0}\|_{Y^{\beta_{0}}})
\end{equation}
for $0<t\leq T$, where
\begin{equation*}
M_{1}(t):=1+t^{1+\alpha_{0}-2\alpha_{1}}+t^{1+\beta_{0}-\alpha_{0}-\beta_{1}}+t^{1+\beta_{0}-\alpha_{2}-\beta_{2}}+t^{1+\alpha_{0}-2\alpha_{2}},
\end{equation*}
\begin{equation*}
M_{2}(t):=1+t^{1+\alpha_{0}-\alpha_{2}-\beta_{2}}+t^{1+2\alpha_{0}-\beta_{0}-2\alpha_{2}}+t^{1+2\alpha_{0}-\beta_{0}-2\alpha_{1}}+t^{1-\beta_{1}}.
\end{equation*}
It is obvious from (2.32), (2.33) that (2.21), (2.22) are established by (4.24), (4.25).
\begin{remark}
Even if $p$, $q$, $\alpha_{0}$ and $\beta_{0}$ satisfy only $(2.9)$, $(2.10)$, it can be easily seen from $(4.13)$, $(4.14)$ that $(2.21)$, $(2.22)$ hold for any $0\leq\alpha<\min\{1-\alpha_{1}, 1-\beta_{1}\}$, $0\leq\beta<\min\{1-\alpha_{2}, 1-\beta_{2}\}$.
\end{remark}

\section{Proof of Corollaries 2.1 and 2.2}
We will prove Corollaries 2.1 and 2.2 in this section.
Since Corollary 2.2 is proved the same as in Corollary 2.1, it is essential for Corollaries 2.1 and 2.2 that we prove Corollary 2.1.

\subsection{$(W^{k,p})^{n}\times W^{k,q}$-estimates for nonlinear terms}
We will state and prove some lemmas for $(W^{k,p})^{n}\times W^{k,q}$-estimates.
It is assured by them that we establish $(W^{k,p})^{n}$-estimates for $F(u,\theta)$ and $W^{k,q}$-estimates for $G(u,\theta)$.
\begin{lemma}[7, Lemma 3.3]
\rm{(i)} Let $n<p<\infty$.
Then
\begin{equation}
\|P(u\cdot\nabla)v\|_{p}\leq C\|u\|_{1,p}\|v\|_{1,p}
\end{equation}
for any $u, v \in (W^{1,p}(\Omega))^{n}$, where $C=C(p)$ is a positive constant.

\rm{(ii)} Let $k \in \mathbb{Z}$, $k>n/p$.
Then
\begin{equation}
\|P(u\cdot\nabla)v\|_{k,p}\leq C\|u\|_{k,p}\|v\|_{k+1,p}
\end{equation}
for any $u \in (W^{k,p}(\Omega))^{n}$, $v \in (W^{k+1,p}(\Omega))^{n}$, where $C=C(k,p)$ is a positive constant.
\end{lemma}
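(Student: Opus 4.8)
The plan is to reduce both inequalities to the boundedness of the Helmholtz projection $P$ together with an elementary product estimate for $(u\cdot\nabla)v$, the sharpness of the latter being governed by the relation between $k$, $p$ and $n$. The two parts differ only in the function space on which $P$ must be controlled.

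For (i), I would first recall from \cite[Theorem 1]{Fujiwara} that $P$ is bounded on $(L^{p}(\Omega))^{n}$, so that it suffices to estimate $\|(u\cdot\nabla)v\|_{p}$. Writing $(u\cdot\nabla)v=\sum_{j=1}^{n}u_{j}\partial_{j}v$ and applying H\"older's inequality gives $\|(u\cdot\nabla)v\|_{p}\leq\|u\|_{\infty}\|\nabla v\|_{p}$. Since $n<p<\infty$, the Sobolev embedding $W^{1,p}(\Omega)\hookrightarrow L^{\infty}(\Omega)$ (indeed into $C^{0,1-n/p}(\overline{\Omega})$, see \cite{Adams}) yields $\|u\|_{\infty}\leq C\|u\|_{1,p}$, while trivially $\|\nabla v\|_{p}\leq\|v\|_{1,p}$. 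Combining these three facts establishes $(5.1)$.

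For (ii), the structural point I would use is that $W^{k,p}(\Omega)$ is a Banach algebra whenever $k>n/p$. This follows from the Leibniz rule, which expands $\partial^{\gamma}(fg)$ for $|\gamma|\leq k$ into a finite sum of terms $\partial^{\delta}f\,\partial^{\gamma-\delta}g$, each controlled in $L^{p}$ by $\|f\|_{k,p}\|g\|_{k,p}$ through the embedding $W^{k,p}(\Omega)\hookrightarrow L^{\infty}(\Omega)$ and Gagliardo--Nirenberg interpolation (see \cite{Adams}). Applying the algebra estimate to the factors $u_{j}\in W^{k,p}(\Omega)$ and $\partial_{j}v\in W^{k,p}(\Omega)$ (the latter because $v\in W^{k+1,p}(\Omega)$) and summing over $j$ gives $\|(u\cdot\nabla)v\|_{k,p}\leq C\|u\|_{k,p}\|\nabla v\|_{k,p}\leq C\|u\|_{k,p}\|v\|_{k+1,p}$. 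It then remains to commute the projection $P$ with the $W^{k,p}$ norm.

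The main obstacle is precisely this last step: unlike in (i), one now needs the boundedness of $P$ on $(W^{k,p}(\Omega))^{n}$, not merely on $(L^{p}(\Omega))^{n}$. I would obtain it from the Helmholtz decomposition $w=Pw+\nabla\phi$, in which $\phi$ solves the weak Neumann problem $\Delta\phi=\mathrm{div}\,w$ in $\Omega$, $\partial_{\nu}\phi=w\cdot\nu$ on $\partial\Omega$; elliptic regularity for this problem furnishes $\|\nabla\phi\|_{k,p}\leq C\|w\|_{k,p}$ and hence $\|Pw\|_{k,p}\leq C\|w\|_{k,p}$, provided $\partial\Omega$ is sufficiently regular. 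The assumed $C^{2,1}$-smoothness of $\partial\Omega$ covers the small values of $k$ that are actually invoked in Corollary $2.1$, and the general statement for arbitrary $k>n/p$ is the content of the cited result. Combining this higher-order mapping property of $P$ with the algebra estimate above yields $(5.2)$.
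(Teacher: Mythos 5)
Your proposal is correct. Note first that the paper gives no proof of this lemma at all: it is imported verbatim from Giga--Miyakawa as \cite[Lemma 3.3]{Giga 3}, so there is no internal argument to compare against line by line. What you have written is a faithful reconstruction, and it matches exactly the technique the paper \emph{does} use for the sibling estimates it proves: part (i) is the same chain (boundedness of $P$ on $(L^{p}(\Omega))^{n}$ from \cite[Theorem 1]{Fujiwara}, H\"older, and $W^{1,p}(\Omega)\hookrightarrow C(\overline{\Omega})$ for $p>n$) that appears in the proof of Lemma 5.2 (i), and part (ii) rests on the Banach-algebra property of $W^{k,p}(\Omega)$ for $k>n/p$, which is precisely how the paper handles Lemma 5.2 (ii) via \cite[Theorem 4.39]{Adams}. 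You also correctly isolate the one genuinely nontrivial ingredient, the boundedness of $P$ on $(W^{k,p}(\Omega))^{n}$, which the paper itself only invokes for $k=1$ (citing \cite[Lemma 3.3]{Giga 3} in the proof of Lemma 5.4 (ii)); your weak-Neumann-problem argument is the standard route to it, and your caveat about the $C^{2,1}$ boundary limiting the admissible $k$ is apt but harmless here, since the application in Lemma 5.5 uses only $k=1$. The only cosmetic point worth tightening is that the elliptic estimate $\|\nabla\phi\|_{k,p}\leq C\|w\|_{k,p}$ for the Neumann problem should be stated with the boundary datum measured in the trace space $W^{k-1/p,p}(\partial\Omega)$, but this does not affect the conclusion.
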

\begin{lemma}
\rm{(i)} Let $n<p<\infty$, $1<q<\infty$.
Then
\begin{equation}
\|(u\cdot\nabla)\theta\|_{q}\leq C\|u\|_{1,p}\|\theta\|_{1,q}
\end{equation}
for any $u \in (W^{1,p}(\Omega))^{n}$, $\theta \in W^{1,q}(\Omega)$, where $C=C(p,q)$ is a positive constant.

\rm{(ii)} Let $n<p<\infty$, $n<q<\infty$, $q\leq p$, $k \in \mathbb{Z}$, $k>n/q$.
Then
\begin{equation}
\|(u\cdot\nabla)\theta\|_{k,q}\leq C\|u\|_{k,p}\|\theta\|_{k+1,q}
\end{equation}
for any  $u \in (W^{k,p}(\Omega))^{n}$, $\theta \in W^{k+1,q}(\Omega)$, where $C=C(k,p,q)$ is a positive constant.
\end{lemma}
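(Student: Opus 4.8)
The plan is to treat the two parts separately, both reducing the advection product to H\"older's inequality combined with Sobolev embeddings valid on our bounded $C^{2,1}$-domain $\Omega$.

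For (i), write $(u\cdot\nabla)\theta=\sum_{i=1}^{n}u_{i}\,\partial_{i}\theta$. Since $n<p<\infty$, the Sobolev embedding theorem gives $(W^{1,p}(\Omega))^{n}\hookrightarrow(L^{\infty}(\Omega))^{n}$, so that $\|u\|_{\infty}\le C\|u\|_{1,p}$. Applying H\"older's inequality with the exponent pair $(\infty,q)$ then yields
\begin{equation*}
\|(u\cdot\nabla)\theta\|_{q}\le\|u\|_{\infty}\|\nabla\theta\|_{q}\le C\|u\|_{1,p}\|\theta\|_{1,q},
\end{equation*}
which is $(5.3)$.

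For (ii), I would expand each derivative by the Leibniz rule: for any multi-index $\gamma$ with $|\gamma|\le k$,
\begin{equation*}
\partial^{\gamma}\bigl((u\cdot\nabla)\theta\bigr)=\sum_{\gamma'+\gamma''=\gamma}\binom{\gamma}{\gamma'}\sum_{i=1}^{n}\partial^{\gamma'}u_{i}\,\partial^{\gamma''}\partial_{i}\theta,
\end{equation*}
so it suffices to bound $\|\partial^{\gamma'}u_{i}\,\partial^{\gamma''}\partial_{i}\theta\|_{q}$ by $C\|u\|_{k,p}\|\theta\|_{k+1,q}$ for every splitting with $a:=|\gamma'|$, $b:=|\gamma''|$ and $a+b\le k$. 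For each such term I choose H\"older exponents $r_{1},r_{2}$ with $1/r_{1}+1/r_{2}=1/q$ and estimate $\|\partial^{\gamma'}u\|_{r_{1}}\le C\|u\|_{k,p}$ via $W^{k-a,p}(\Omega)\hookrightarrow L^{r_{1}}(\Omega)$ and $\|\partial^{\gamma''}\partial_{i}\theta\|_{r_{2}}\le C\|\theta\|_{k+1,q}$ via $W^{k-b,q}(\Omega)\hookrightarrow L^{r_{2}}(\Omega)$. These embeddings require $1/r_{1}\ge1/p-(k-a)/n$ and $1/r_{2}\ge1/q-(k-b)/n$; summing the two lower bounds shows they are jointly compatible with $1/r_{1}+1/r_{2}=1/q$ as soon as $2k-a-b\ge n/p$, and this holds because $2k-a-b\ge k>n/q\ge n/p$, where $k>n/q$ is the hypothesis and $n/q\ge n/p$ follows from $q\le p$.

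The endpoint splittings are worth checking explicitly, since this is where the sharp hypotheses enter: when $a=k$, $b=0$ one places the top derivative of $u$ in $L^{p}$ and $\partial_{i}\theta$ in $L^{r_{2}}$ with $1/r_{2}=1/q-1/p\ge0$, which is exactly where the restriction $q\le p$ is needed; when $a=0$, $b=k$ one puts $u_{i}$ in $L^{\infty}$, legitimate since $k>n/p$, and the top derivative of $\theta$ in $L^{q}$. The main obstacle is essentially bookkeeping: verifying in every intermediate case that $r_{1},r_{2}$ can be chosen with $1/r_{1},1/r_{2}\in[0,1]$ satisfying both embedding constraints simultaneously, and confirming that the borderline embeddings into $L^{\infty}$, needed when $(k-a)-n/p\ge0$ or $(k-b)-n/q\ge0$, are covered by the strict inequality $k>n/q$. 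Summing the finitely many resulting bounds over all $\gamma$ with $|\gamma|\le k$ then gives $(5.4)$.
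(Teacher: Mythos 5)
Your proof is correct, and part (i) coincides with the paper's: both use $W^{1,p}(\Omega)\hookrightarrow C(\overline{\Omega})$ (valid since $n<p$) followed by H\"older's inequality with exponents $(\infty,q)$. For part (ii), however, you take a genuinely different route. The paper simply invokes \cite[Theorem 4.39]{Adams}: for $k>n/q$ the space $W^{k,q}(\Omega)$ is a Banach algebra, so writing $(u\cdot\nabla)\theta=\sum_{i}u_{i}\partial_{i}\theta$ and using $W^{k,p}(\Omega)\hookrightarrow W^{k,q}(\Omega)$ (which is where $q\leq p$ enters, $\Omega$ being bounded) gives
\begin{equation*}
\|u_{i}\partial_{i}\theta\|_{k,q}\leq C\|u_{i}\|_{k,q}\|\partial_{i}\theta\|_{k,q}\leq C\|u\|_{k,p}\|\theta\|_{k+1,q}
\end{equation*}
in one line. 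You instead re-derive the relevant multiplication estimate from scratch via the Leibniz rule, H\"older and Sobolev embeddings; your exponent bookkeeping is sound (the key inequality $2k-a-b\geq k>n/q\geq n/p$ does make the two embedding constraints compatible with $1/r_{1}+1/r_{2}=1/q$, and you correctly flag that the borderline embeddings into $L^{\infty}$ are only available under the strict inequality $k>n/q$). What the paper's argument buys is brevity and a clean reduction to a standard cited theorem; what yours buys is self-containedness and, in effect, a direct proof of the asymmetric multiplication estimate $W^{k,p}\cdot W^{k,q}\rightarrow W^{k,q}$ actually needed, without passing through the algebra property. Either is acceptable.
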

\begin{proof}
(i) Let us notice that $W^{1,p}(\Omega)\hookrightarrow C(\overline{\Omega})$ from the Sobolev embedding theorem.
Then we obtain that
\begin{equation*}
\begin{split}
\|(u\cdot\nabla)\theta\|_{q}&\leq C\|u\|_{\infty}\|\theta\|_{1,q} \\
&\leq C\|u\|_{1,p}\|\theta\|_{1,q}.
\end{split}
\end{equation*}

(ii) It is known in \cite[Theorem 4.39]{Adams} that $W^{k,q}(\Omega)$ is a Banach algebra for any $k \in \mathbb{Z}$, $k>n/q$.
Therefore, the conclusion follows immediately from the above fact and $q\leq p$.
\end{proof}
\begin{lemma}
\rm{(i)} Let $n<p<\infty$, $1<q<\infty$, $2q\leq p$.
Then
\begin{equation}
\|\Phi(u,v)\|_{q}\leq C\|u\|_{1,p}\|v\|_{1,p}
\end{equation}
for any $u, v \in (W^{1,p}(\Omega))^{n}$, where $C=C(p,q)$ is a positive constant.

\rm{(ii)} Let $n<p<\infty$, $1<q<\infty$, $2q\leq p$, $k \in \mathbb{Z}$, $k>n/p$.
Then
\begin{equation}
\|\Phi(u,v)\|_{k,q}\leq C\|u\|_{k+1,p}\|v\|_{k+1,p}
\end{equation}
for any $u, v \in (W^{k+1,p}(\Omega))^{n}$, where $C=C(k,p,q)$ is a positive constant.
\end{lemma}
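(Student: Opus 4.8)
The plan is to exploit that $\Phi(u,v)=2D(u):D(v)=2\sum_{i,j}D_{ij}(u)D_{ij}(v)$ is a finite sum of products of first-order partial derivatives of $u$ and $v$, so both estimates reduce to controlling products of the form $\partial u\cdot\partial v$ in the relevant norm. Since only first derivatives of $u$ and $v$ enter, if $u,v\in(W^{1,p}(\Omega))^n$ then each $D_{ij}(u)$ lies in $L^p(\Omega)$, and if $u,v\in(W^{k+1,p}(\Omega))^n$ then each $D_{ij}(u)$ lies in $W^{k,p}(\Omega)$, with norms controlled by $\|u\|_{1,p}$ and $\|u\|_{k+1,p}$ respectively.

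For part (i) I would apply H\"older's inequality to each product with the conjugate exponents $2q$ and $2q$ (so that $1/(2q)+1/(2q)=1/q$), obtaining $\|D_{ij}(u)D_{ij}(v)\|_q\leq\|D_{ij}(u)\|_{2q}\|D_{ij}(v)\|_{2q}$. The hypothesis $2q\leq p$ is exactly what makes the embedding $L^p(\Omega)\hookrightarrow L^{2q}(\Omega)$ available on the bounded domain $\Omega$, so that $\|D_{ij}(u)\|_{2q}\leq C\|D_{ij}(u)\|_p\leq C\|u\|_{1,p}$. Summing over the finitely many index pairs gives (5.6). This is the exact analogue of Lemma 2.7, where the Schwarz inequality played the role of H\"older and the embedding was phrased through $X^{\alpha_2}_p$.

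For part (ii) the natural tool is the Banach algebra property of $W^{k,q}(\Omega)$ for $k>n/q$, as used in the proof of Lemma 5.2(ii) via \cite[Theorem 4.39]{Adams}; here, since $2q\le p$ forces $q\le p$, I would instead work in $W^{k,p}(\Omega)$, which is a Banach algebra because $k>n/p$. As $u,v\in(W^{k+1,p}(\Omega))^n$, every entry $D_{ij}(u),D_{ij}(v)$ belongs to $W^{k,p}(\Omega)$ with $\|D_{ij}(u)\|_{k,p}\leq C\|u\|_{k+1,p}$, and the algebra property yields $\|D_{ij}(u)D_{ij}(v)\|_{k,p}\leq C\|D_{ij}(u)\|_{k,p}\|D_{ij}(v)\|_{k,p}\leq C\|u\|_{k+1,p}\|v\|_{k+1,p}$. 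Summing and then invoking the embedding $W^{k,p}(\Omega)\hookrightarrow W^{k,q}(\Omega)$ on the bounded domain (which holds because $q\leq p$, a consequence of $2q\leq p$) converts the $W^{k,p}$-bound into the desired $W^{k,q}$-bound (5.7).

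There is no serious obstacle here; the only points requiring care are bookkeeping ones: checking that differentiating a $W^{k+1,p}$ function lands in $W^{k,p}$, so that the Banach algebra threshold $k>n/p$ applies to the factors $D_{ij}(u)$, and confirming that the boundedness of $\Omega$ is what legitimizes both $L^p(\Omega)\hookrightarrow L^{2q}(\Omega)$ and $W^{k,p}(\Omega)\hookrightarrow W^{k,q}(\Omega)$. Both are standard, so the proof is short.
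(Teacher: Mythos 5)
Your proof is correct. Part (i) is essentially identical to the paper's: the paper also applies the Schwarz (H\"older) inequality to split $\|\Phi(u,v)\|_{q}$ into $2q$--$2q$ factors and then invokes the embedding $W^{1,p}(\Omega)\hookrightarrow W^{1,2q}(\Omega)$, which is exactly your $L^{p}(\Omega)\hookrightarrow L^{2q}(\Omega)$ at the level of first derivatives. Part (ii) is where you genuinely diverge: the paper stays in the exponent $2q$, using the Leibniz rule together with the Schwarz inequality to obtain $\|\Phi(u,v)\|_{k,q}\leq C\|u\|_{k+1,2q}\|v\|_{k+1,2q}$ directly (each term $\partial^{\delta}D_{ij}(u)\,\partial^{\gamma-\delta}D_{ij}(v)$ with $|\gamma|\leq k$ is estimated in $L^{q}$ by the product of $L^{2q}$-norms), and then concludes via $W^{k+1,p}(\Omega)\hookrightarrow W^{k+1,2q}(\Omega)$, which is where $2q\leq p$ enters; notably this route never uses the threshold $k>n/p$. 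You instead work in the exponent $p$, invoking the Banach algebra property of $W^{k,p}(\Omega)$ (valid precisely because $k>n/p$, by the same Theorem 4.39 of Adams the paper cites for Lemma 5.2) and then embedding $W^{k,p}(\Omega)\hookrightarrow W^{k,q}(\Omega)$, which only needs $q\leq p$. Both arguments are valid under the stated hypotheses; the trade-off is that the paper's version of (ii) does not need the algebra threshold $k>n/p$, while yours does not need the full strength of $2q\leq p$, only $q\leq p$. Since the lemma assumes both conditions, either route closes the proof.
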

\begin{proof}
(i) After applying the Schwarz inequality to $\|\Phi(u,v)\|_{q}$, we can obtain (5.5) by $W^{1,p}(\Omega)\hookrightarrow W^{1,2q}(\Omega)$.

(ii) It can be easily seen from the Leibniz rule and the Schwarz inequality that $\|\Phi(u,v)\|_{k,q}\leq C\|u\|_{k+1,2q}\|v\|_{k+1,2q}$.
Therefore, $W^{k+1,p}(\Omega)\hookrightarrow W^{k+1,2q}(\Omega)$ implies (5.6).
\end{proof}
\begin{lemma}
\rm{(i)} Let $f \in C^{0,1}(\mathbb{R};\mathbb{R}^{n})$ with the Lipschitz constant $L_{f}$, $f(0)=0$, $n<p<\infty$, $n<q<\infty$, $q\leq p$.
Then
\begin{equation}
\|Pf(\theta)\|_{p}\leq CL_{f}\|\theta\|_{1,q}
\end{equation}
for any $\theta \in W^{1,q}(\Omega)$, where $C=C(p,q)$ is a positive constant.

\rm{(ii)} Let $f \in C^{0,1}(\mathbb{R};\mathbb{R}^{n})\cap C^{1}(\mathbb{R};\mathbb{R}^{n})$ with the Lipschitz constant $L_{f}$, $f(0)=0$, $n<p<\infty$, $n<q<\infty$, $q\leq p$.
Then
\begin{equation}
\|Pf(\theta)\|_{1,p}\leq C\|\theta\|_{2,q}
\end{equation}
for any $\theta \in W^{2,q}(\Omega)$, where $C=C(p,q)$ is a positive constant.
\end{lemma}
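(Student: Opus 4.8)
The plan is to reduce both inequalities to three ingredients: the boundedness of the Helmholtz projection $P$, a pointwise bound coming from $f(0)=0$ together with the Lipschitz hypothesis, and the Sobolev embeddings available when $q>n$.

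For (i), I would first invoke the $(L^{p}(\Omega))^{n}$-boundedness of $P$ (already used in the proof of Lemma 2.8, from \cite[Theorem 1]{Fujiwara}) to get $\|Pf(\theta)\|_{p}\leq C\|f(\theta)\|_{p}$. Since $f(0)=0$ and $f$ is $L_{f}$-Lipschitz, $|f(\theta(x))|=|f(\theta(x))-f(0)|\leq L_{f}|\theta(x)|$ pointwise, hence $\|f(\theta)\|_{p}\leq L_{f}\|\theta\|_{p}$. Finally, because $n<q$, the Sobolev embedding theorem gives $W^{1,q}(\Omega)\hookrightarrow C(\overline{\Omega})\hookrightarrow L^{p}(\Omega)$ (the last inclusion using that $\Omega$ is bounded), so $\|\theta\|_{p}\leq C\|\theta\|_{1,q}$, and (5.7) follows upon combining the three estimates.

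For (ii) I would argue analogously, but now in the $W^{1,p}$-norm. The extra ingredient is that $P$ maps $(W^{1,p}(\Omega))^{n}$ boundedly into itself; for the $C^{2,1}$-domain $\Omega$ this follows from elliptic regularity for the Neumann problem $\Delta\pi=\mathrm{div}\,g$, $\partial_{\nu}\pi=g\cdot\nu$ defining the decomposition $Pg=g-\nabla\pi$. Granting $\|Pf(\theta)\|_{1,p}\leq C\|f(\theta)\|_{1,p}$, it remains to estimate $\|f(\theta)\|_{1,p}=\|f(\theta)\|_{p}+\|\nabla f(\theta)\|_{p}$. Since $n<q$ yields $\theta\in W^{2,q}(\Omega)\hookrightarrow C^{1}(\overline{\Omega})$ and $f\in C^{1}(\mathbb{R};\mathbb{R}^{n})$, the chain rule applies classically: $\partial_{j}(f_{i}(\theta))=f_{i}'(\theta)\partial_{j}\theta$. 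The Lipschitz condition forces $|f'|\leq L_{f}$, so $\|\nabla f(\theta)\|_{p}\leq L_{f}\|\nabla\theta\|_{p}$, while $\|f(\theta)\|_{p}\leq L_{f}\|\theta\|_{p}$ as before.

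To close, I would use the Sobolev embedding once more: $\nabla\theta\in(W^{1,q}(\Omega))^{n}$ and $n<q$ give $\|\nabla\theta\|_{p}\leq C\|\nabla\theta\|_{1,q}\leq C\|\theta\|_{2,q}$, and likewise $\|\theta\|_{p}\leq C\|\theta\|_{2,q}$; combining yields $\|f(\theta)\|_{1,p}\leq C\|\theta\|_{2,q}$, with the constant absorbing $L_{f}$, hence (5.8). The main obstacle is precisely the $W^{1,p}$-boundedness of $P$: unlike its $L^{p}$-boundedness, it is not quoted earlier in the paper and must be justified from the regularity of $\partial\Omega$. If one prefers to avoid invoking it as a black box, the alternative is to estimate $\nabla\pi$ directly through the $W^{2,p}$-bound $\|\pi\|_{2,p}\leq C\|f(\theta)\|_{1,p}$ for the above Neumann problem, which is exactly where the $C^{2,1}$-regularity of the boundary is consumed.
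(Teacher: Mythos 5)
Your proposal is correct and follows essentially the same route as the paper: $L^{p}$-boundedness of $P$, the pointwise Lipschitz bound $\|f(\theta)\|_{p}\leq L_{f}\|\theta\|_{p}$, and the Sobolev embeddings $W^{1,q}(\Omega)\hookrightarrow L^{p}(\Omega)$ and $W^{2,q}(\Omega)\hookrightarrow W^{1,p}(\Omega)$ for $n<q\leq p$. The one ingredient you flag as needing justification, the $(W^{1,p}(\Omega))^{n}$-boundedness of $P$, is simply quoted in the paper from \cite[Lemma 3.3]{Giga 3}; your elliptic-regularity sketch for the Neumann problem is a reasonable way to supply it.
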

\begin{proof}
(i) Since $P$ is a bounded operator in $(L^{p}(\Omega))^{n}$ and $\|f(\theta)\|_{p}\leq L_{f}\|\theta\|_{p}$, it follows from $W^{1,q}(\Omega)\hookrightarrow L^{p}(\Omega)$ that we obtain (5.7).

(ii) It is known in \cite[Lemma 3.3]{Giga 3} that $P$ is a bounded operator not only in $(L^{p}(\Omega))^{n}$ but also in $(W^{1,p}(\Omega))^{n}$.
Since $f \in C^{0,1}(\mathbb{R};\mathbb{R}^{n})\cap C^{1}(\mathbb{R};\mathbb{R}^{n})$ implies $f' \in C_{b}(\mathbb{R};\mathbb{R}^{n})$, $\|f(\theta)\|_{1,p}\leq C\|\theta\|_{1,p}$.
Therefore, (5.8) follows immediately from $W^{2,q}(\Omega)\hookrightarrow W^{1,p}(\Omega)$.
\end{proof}

\subsection{Regularity of strong solutions}
It is sufficient for Corollary 2.1 that we obtain the following lemmas:
\begin{lemma}
Let $f \in C^{0,1}(\mathbb{R};\mathbb{R}^{n})$, $f(0)=0$, $p$ and $q$ satisfy $(2.27)$.
Then
\begin{equation*}
F(u,\theta) \in C^{0,\hat{\alpha}}((0,T];(W^{1,p}(\Omega))^{n}),
\end{equation*}
\begin{equation*}
G(u,\theta) \in C^{0,\hat{\beta}}((0,T];W^{1,q}(\Omega))
\end{equation*}
for any $0<\hat{\alpha}<1$, $0<\hat{\beta}<1$.
\end{lemma}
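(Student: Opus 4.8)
The plan is to reduce both assertions of the lemma to the Sobolev product estimates of Lemmas 5.1--5.4 together with the Hölder-in-time regularity of the strong solution already secured in Theorem 2.3 and Remark 4.1. Since $\alpha_0=\beta_0=0$ under (2.27), that solution satisfies $u\in C^{0,\hat\alpha}((0,T];X^1)$ and $\theta\in C^{0,\hat\beta}((0,T];Y^1)$ with $\|u(t)\|_{X^1},\|\theta(t)\|_{Y^1}\le Ct^{-1}(\|u_0\|_{X^{\alpha_0}}+\|\theta_0\|_{Y^{\beta_0}})$. First I would invoke Lemma 2.4 (with $k=2$ and $r=p$, resp. $r=q$) to obtain the embeddings $X^1\hookrightarrow(W^{2,p}(\Omega))^n$ and $Y^1\hookrightarrow W^{2,q}(\Omega)$; consequently $u$ is bounded and locally Hölder continuous as a $(W^{2,p})^n$-valued map, and $\theta$ likewise as a $W^{2,q}$-valued map, on every compact subinterval $[t_0,T]\subset(0,T]$. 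This turns all the Sobolev norms occurring below into quantities that are bounded, and whose increments over $[t,t+h]$ are $O(h^{\hat\alpha})$ or $O(h^{\hat\beta})$, on $[t_0,T]$.

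For the bilinear terms the argument is then routine. For the convective part of $F$ I would write $P(u(t+h)\cdot\nabla)u(t+h)-P(u(t)\cdot\nabla)u(t)=P((u(t+h)-u(t))\cdot\nabla)u(t+h)+P(u(t)\cdot\nabla)(u(t+h)-u(t))$ and apply Lemma 5.1(ii) with $k=1$ (admissible since $p>n$), so that each summand is a product of one factor bounded on $[t_0,T]$ and one Hölder-small increment; hence $P(u\cdot\nabla)u\in C^{0,\hat\alpha}((0,T];(W^{1,p})^n)$. The identical splitting, now using Lemmas 5.2(ii) and 5.3(ii) — whose hypotheses $q\le p$, $2q\le p$ and $1>n/q,\,1>n/p$ all follow from (2.27) — shows that $(u\cdot\nabla)\theta$ and $\Phi(u)$, and therefore the whole of $G(u,\theta)$, belong to $C^{0,\hat\beta}((0,T];W^{1,q})$. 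Because the Hölder exponents of Remark 4.1 may be taken arbitrarily in $(0,1)$, this delivers the stated ranges $0<\hat\alpha<1$ and $0<\hat\beta<1$.

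The remaining, and in my view decisive, difficulty is the buoyancy term $Pf(\theta)$ in $F(u,\theta)$. Pointwise membership in $(W^{1,p})^n$ is not the issue: $P$ is bounded on $(W^{1,p}(\Omega))^n$ by \cite[Lemma 3.3]{Giga 3}, the chain rule gives $\nabla f(\theta)=f'(\theta)\nabla\theta$ with $|\nabla f(\theta)|\le L_f|\nabla\theta|$, and $\theta(t)\in W^{2,q}\hookrightarrow W^{1,p}$ because $\tfrac1q-\tfrac1p<\tfrac1q<\tfrac1n$ under (2.27), so that $\|Pf(\theta)(t)\|_{1,p}\le CL_f\|\theta(t)\|_{2,q}$ exactly as in Lemma 5.4. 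The obstacle is the time-Hölder continuity in $W^{1,p}$: writing $\nabla f(\theta(t+h))-\nabla f(\theta(t))=f'(\theta(t+h))[\nabla\theta(t+h)-\nabla\theta(t)]+[f'(\theta(t+h))-f'(\theta(t))]\nabla\theta(t)$, the first summand is bounded by $L_f\|\theta(t+h)-\theta(t)\|_{1,p}\le CL_f\|\theta(t+h)-\theta(t)\|_{Y^1}$ and is Hölder, but the second requires control of the increment of $f'(\theta)$. To handle it I would exploit $Y^1\hookrightarrow C^1(\overline\Omega)$ (valid since $q>n$), so that $\theta(t+h)\to\theta(t)$ uniformly while $\nabla\theta(t)$ stays bounded in $L^p$, and then estimate $[f'(\theta(t+h))-f'(\theta(t))]\nabla\theta(t)$ in $L^p$ through the shrinking measure of the set on which $f'(\theta)$ varies; it is precisely here that the regularity of $f$ has to be brought fully to bear. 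Extracting a genuine Hölder rate for this second term, rather than mere continuity, is the crux of the proof and the step I would expect to demand the most care.
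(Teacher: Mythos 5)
Your treatment of the bilinear terms is exactly what the paper's one-line proof intends: split each increment as $N(v(t+h),w(t+h))-N(v(t),w(t))=N(v(t+h)-v(t),w(t+h))+N(v(t),w(t+h)-w(t))$, apply Lemmas 5.1 (ii), 5.2 (ii), 5.3 (ii) with $k=1$, and feed in the H\"older continuity of $(u,\theta)$ in $X^{1}\times Y^{1}$ from Theorem 2.3 (ii), which under $(2.27)$ (so $\alpha_{0}=\beta_{0}=0$ and $(2.17)$ holds) is available for every exponent in $(0,1)$. Together with $X^{1}\hookrightarrow (W^{2,p}(\Omega))^{n}$, $Y^{1}\hookrightarrow W^{2,q}(\Omega)$ this disposes of $P(u\cdot\nabla)u$, $(u\cdot\nabla)\theta$ and $\Phi(u)$; that part of your argument is complete and coincides with the paper's.

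The gap is the one you flag yourself and then do not close: the H\"older continuity of $t\mapsto Pf(\theta(t))$ into $(W^{1,p}(\Omega))^{n}$. The device you sketch --- uniform convergence $\theta(t+h)\to\theta(t)$ together with the ``shrinking measure of the set on which $f'(\theta)$ varies'' --- can only upgrade continuity of $f'$ to continuity of $t\mapsto f'(\theta(t))\nabla\theta(t)$ in $L^{p}$: since $f'\in C_{b}(\mathbb{R};\mathbb{R}^{n})$ carries no quantitative modulus of continuity, the bound you would get for $\|(f'(\theta(t+h))-f'(\theta(t)))\nabla\theta(t)\|_{p}$ is $\omega(\|\theta(t+h)-\theta(t)\|_{\infty})\|\nabla\theta(t)\|_{p}=o(1)$, not $O(h^{\gamma})$. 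So the proposal as written proves $Pf(\theta)\in C((0,T];(W^{1,p})^{n})$ but not membership in $C^{0,\hat{\alpha}}$, which is what the lemma asserts and what Lemma 5.6 consumes. You have in fact put your finger on a point the paper glosses over: its proof cites Lemma 5.4 (ii), which is only the pointwise bound $\|Pf(\theta)\|_{1,p}\leq C\|\theta\|_{2,q}$ and not a difference estimate, and the troublesome summand $(f'(\theta_{2})-f'(\theta_{1}))\nabla\theta_{1}$ is exactly the one you isolate. To actually finish one needs a quantitative hypothesis on $f'$ (e.g.\ $f'$ locally H\"older or Lipschitz, whence $\|(f'(\theta_{2})-f'(\theta_{1}))\nabla\theta_{1}\|_{p}\leq C\|\theta_{2}-\theta_{1}\|_{\infty}^{\gamma}\|\nabla\theta_{1}\|_{p}$ via $Y^{1}\hookrightarrow C(\overline{\Omega})$). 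Identifying the crux is not the same as resolving it, so the proof is incomplete at precisely the step the lemma exists to certify.
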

\begin{proof}
It follows immediately from Theorem 2.3 (ii), Lemmas 5.1--5.4 (ii) with $k=1$.
\end{proof}
\begin{lemma}
Let $f \in C^{0,1}(\mathbb{R};\mathbb{R}^{n})$, $f(0)=0$, $p$ and $q$ satisfy $(2.27)$.
Then
\begin{equation*}
u \in C^{0,\hat{\alpha}}((0,T];(W^{3,p}(\Omega))^{n}),
\end{equation*}
\begin{equation*}
\theta \in C^{0,\hat{\beta}}((0,T];W^{3,q}(\Omega))
\end{equation*}
for any $0<\hat{\alpha}<1/2$, $0<\hat{\beta}<1/2$.
\end{lemma}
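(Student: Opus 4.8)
The plan is to bootstrap the strong solution $(u,\theta)$ from Theorem 2.3 by one order in the spatial variable, reading off $Au$ and $B\theta$ from the evolution equations and then inverting the stationary Stokes operator and the Dirichlet Laplacian. Since $(u,\theta)$ is a strong solution, for every $0<t\le T$ we have $Au(t)=F(u,\theta)(t)-d_{t}u(t)$ and $B\theta(t)=G(u,\theta)(t)-d_{t}\theta(t)$, so the whole problem reduces to showing that these right-hand sides are H\"{o}lder continuous in time with values in $(W^{1,p}(\Omega))^{n}$ and $W^{1,q}(\Omega)$, after which elliptic regularity supplies the two extra derivatives.

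First I would control the data. By Lemma 5.5 one has $F(u,\theta)\in C^{0,\hat\alpha}((0,T];(W^{1,p}(\Omega))^{n})$ and $G(u,\theta)\in C^{0,\hat\beta}((0,T];W^{1,q}(\Omega))$ for all $0<\hat\alpha<1$, $0<\hat\beta<1$. For the time derivatives I would invoke Theorem 2.3 (ii) with $\alpha=\beta=1/2$, which applies here because $(2.27)$ lets us take $\alpha_{0}=\beta_{0}=0$ satisfying $(2.10)$, $(2.17)$, $(2.18)$; this yields $d_{t}u\in C^{0,\tilde\alpha}((0,T];X^{1/2})$ and $d_{t}\theta\in C^{0,\tilde\beta}((0,T];Y^{1/2})$ for every $\tilde\alpha,\tilde\beta<1/2$, the bound $\tilde\alpha<1-\alpha$ forcing the exponent below $1/2$ at $\alpha=1/2$. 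Combining with the embeddings $X^{1/2}\hookrightarrow(W^{1,p}(\Omega))^{n}$ and $Y^{1/2}\hookrightarrow W^{1,q}(\Omega)$ (Lemma 2.4 with $k=1$, $r=p$ and $r=q$) gives $d_{t}u$ and $d_{t}\theta$ H\"{o}lder continuous into $(W^{1,p}(\Omega))^{n}$ and $W^{1,q}(\Omega)$ with exponent $<1/2$. Hence $Au\in C^{0,\hat\alpha}((0,T];(W^{1,p}(\Omega))^{n})$ and $B\theta\in C^{0,\hat\beta}((0,T];W^{1,q}(\Omega))$ for all $\hat\alpha,\hat\beta<1/2$.

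Next I would invert. Writing $u(t)=A^{-1}(Au(t))$, the operator $A^{-1}$ is, modulo the pressure, the solution operator of the stationary Stokes system $-\Delta u+\nabla\pi=g$, $\mathrm{div}\,u=0$, $u|_{\partial\Omega}=0$; by the Cattabriga--Agmon--Douglis--Nirenberg regularity on a $C^{2,1}$-domain it maps $(W^{1,p}(\Omega))^{n}\cap L^{p}_{\sigma}(\Omega)$ boundedly into $(W^{3,p}(\Omega))^{n}$, so $u\in C^{0,\hat\alpha}((0,T];(W^{3,p}(\Omega))^{n})$. Likewise $\theta(t)=B^{-1}(B\theta(t))$, where $B^{-1}$ solves $-\Delta\theta=g$, $\theta|_{\partial\Omega}=0$; elliptic regularity for the Dirichlet Laplacian on a $C^{2,1}$-domain maps $W^{1,q}(\Omega)$ boundedly into $W^{3,q}(\Omega)$, giving $\theta\in C^{0,\hat\beta}((0,T];W^{3,q}(\Omega))$. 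The $C^{2,1}$-regularity of $\partial\Omega$ is precisely what is needed to gain two derivatives at the $W^{3}$-level, and the H\"{o}lder exponents are inherited from the data because both inverses are bounded linear maps.

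The hard part, and the reason the exponents are capped at $1/2$, lies entirely in the time regularity. A tempting shortcut would be to bootstrap abstractly through the fractional-power scale, applying the analogue of Lemma 4.2 in the base spaces $X^{1/2}$ and $Y^{1/2}$ to reach $X^{3/2}$ and $Y^{3/2}$; but this fails for $\theta$ because the dissipation term $\Phi(u)$ does not vanish on $\partial\Omega$ (there $u=0$ but $D(u)\ne 0$), so $G(u,\theta)\notin Y^{1/2}=D(B^{1/2})=W^{1,q}_{0}(\Omega)$, and an analogous tangential-trace defect of $P(u\cdot\nabla)u$ obstructs $F(u,\theta)\in X^{1/2}$. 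The elliptic-regularity formulation above avoids this entirely, since the Dirichlet data $g$ need satisfy no boundary condition. What then remains genuinely binding is only that $d_{t}u$ and $d_{t}\theta$ are H\"{o}lder continuous into $X^{1/2}\hookrightarrow(W^{1,p}(\Omega))^{n}$ and $Y^{1/2}\hookrightarrow W^{1,q}(\Omega)$ with exponent strictly below $1/2$, which transfers through the bounded inverses to give exactly the regularity $0<\hat\alpha,\hat\beta<1/2$ asserted in the statement.
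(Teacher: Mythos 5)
Your argument is correct and is essentially identical to the paper's proof: both write $u=A^{-1}(F(u,\theta)-d_{t}u)$, $\theta=B^{-1}(G(u,\theta)-d_{t}\theta)$, control $F,G$ via Lemma 5.5, control $d_{t}u,d_{t}\theta$ via Theorem 2.3 (ii) with $\alpha=\beta=1/2$ together with the embeddings $X^{1/2}\hookrightarrow(W^{1,p}(\Omega))^{n}$, $Y^{1/2}\hookrightarrow W^{1,q}(\Omega)$, and then gain two derivatives by elliptic regularity for $A^{-1}$ and $B^{-1}$. Your additional remark on why a purely fractional-power bootstrap would fail is a sensible aside but not needed for the proof.
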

\begin{proof}
It is clear from (2.7), (2.8) that $X^{1/2}\hookrightarrow (W^{1,p}(\Omega))^{n}$, $Y^{1/2}\hookrightarrow W^{1,q}(\Omega)$.
By applying Theorem 2.3 (ii) with $\alpha=1/2$, $\beta=1/2$ to $(d_{t}u,d_{t}\theta)$, the conclusion follows immediately from Lemma 5.5, $u=A^{-1}(F(u,\theta)-d_{t}u)$, $\theta=B^{-1}(G(u,\theta)-d_{t}\theta)$.
\end{proof}
\begin{lemma}
Let $f \in C^{0,1}(\mathbb{R};\mathbb{R}^{n})\cap C^{1}(\mathbb{R};\mathbb{R}^{n})$, $f(0)=0$, $p$ and $q$ satisfy $(2.27)$.
Then
\begin{equation*}
d_{t}F(u,\theta) \in C^{0,\hat{\alpha}}((0,T];L^{p}_{\sigma}(\Omega)),
\end{equation*}
\begin{equation*}
d_{t}G(u,\theta) \in C^{0,\hat{\beta}}((0,T];L^{q}(\Omega))
\end{equation*}
for any $0<\hat{\alpha}<\min\{1-\alpha_{1}, 1-\beta_{1}\}$, $0<\hat{\beta}<\min\{1-\alpha_{2}, 1-\beta_{2}\}$.
\end{lemma}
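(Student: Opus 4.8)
The plan is to differentiate the nonlinear terms in time and to bound each resulting factor by regularity that is already available. Since $(u,\theta)$ is a strong solution and $f\in C^{1}(\mathbb{R};\mathbb{R}^{n})$, the chain and product rules applied to $F(u,\theta)=-P(u\cdot\nabla)u+Pf(\theta)$ and $G(u,\theta)=-(u\cdot\nabla)\theta+\Phi(u,u)$ give
\begin{equation*}
d_{t}F(u,\theta)=-P(d_{t}u\cdot\nabla)u-P(u\cdot\nabla)d_{t}u+P(f'(\theta)d_{t}\theta),
\end{equation*}
\begin{equation*}
d_{t}G(u,\theta)=-(d_{t}u\cdot\nabla)\theta-(u\cdot\nabla)d_{t}\theta+2\Phi(d_{t}u,u).
\end{equation*}
It therefore suffices to show that each term on the right is H\"older continuous in time with values in $L^{p}_{\sigma}(\Omega)$, respectively $L^{q}(\Omega)$.

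First I would record the regularity of the individual factors. Because (2.27) permits the choice $\alpha_{0}=\beta_{0}=0$, condition (2.17) holds, so Theorem 2.3 (ii) is available and gives $d_{t}u\in C^{0,\tilde{\alpha}}((0,T];X^{\alpha})$ and $d_{t}\theta\in C^{0,\tilde{\beta}}((0,T];Y^{\beta})$ for all $0\le\alpha<1$ and $0\le\beta<1$. Taking $\alpha=\beta=1/2$ and using the embeddings (2.7), (2.8) yields $d_{t}u\in C^{0,\tilde{\alpha}}((0,T];(W^{1,p}(\Omega))^{n})$ and $d_{t}\theta\in C^{0,\tilde{\beta}}((0,T];W^{1,q}(\Omega))$, while Lemma 5.6 gives $u\in C^{0,\hat{\alpha}}((0,T];(W^{3,p}(\Omega))^{n})$ and $\theta\in C^{0,\hat{\beta}}((0,T];W^{3,q}(\Omega))$. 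By the Sobolev embedding theorem all of these factors are, on every compact subinterval of $(0,T]$, bounded and H\"older continuous in the lower-order Sobolev and H\"older spaces that the subsequent estimates require.

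Next I would dispose of the bilinear terms. Their underlying maps are continuous bilinear forms by Lemmas 5.1--5.3 (i), whose hypotheses $n<p$ and $2q\le p$ are exactly guaranteed by (2.27). For any continuous bilinear map $\mathcal{B}$ one has $\|\mathcal{B}(a(t),b(t))-\mathcal{B}(a(s),b(s))\|\le C\|a(t)-a(s)\|\,\|b(t)\|+C\|a(s)\|\,\|b(t)-b(s)\|$, so that a product of two locally bounded, time-H\"older continuous factors is again time-H\"older continuous with the smaller exponent. Applying this to $P(d_{t}u\cdot\nabla)u$, $P(u\cdot\nabla)d_{t}u$, $(d_{t}u\cdot\nabla)\theta$, $(u\cdot\nabla)d_{t}\theta$ and $\Phi(d_{t}u,u)$, with the factor regularity listed above, settles every term except the one arising from the external force.

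The hard part will be the composition term $P(f'(\theta)d_{t}\theta)$. Here $f\in C^{0,1}(\mathbb{R};\mathbb{R}^{n})\cap C^{1}(\mathbb{R};\mathbb{R}^{n})$ only yields $f'\in C_{b}(\mathbb{R};\mathbb{R}^{n})$, as in the proof of Lemma 5.4 (ii), so $f'$ is bounded and continuous but carries no H\"older modulus of its own. Splitting
\begin{equation*}
f'(\theta(t))d_{t}\theta(t)-f'(\theta(s))d_{t}\theta(s)=f'(\theta(t))\bigl(d_{t}\theta(t)-d_{t}\theta(s)\bigr)+\bigl(f'(\theta(t))-f'(\theta(s))\bigr)d_{t}\theta(s),
\end{equation*}
the first summand is bounded in $L^{p}$ by $\|f'\|_{\infty}\|d_{t}\theta(t)-d_{t}\theta(s)\|_{p}$ and is time-H\"older continuous through the regularity of $d_{t}\theta$ and $W^{1,q}(\Omega)\hookrightarrow L^{p}(\Omega)$ (valid since $n<q$). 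The delicate summand is the second, which I would estimate by $\|f'(\theta(t))-f'(\theta(s))\|_{\infty}\,\|d_{t}\theta(s)\|_{p}$: the factor $\|d_{t}\theta(s)\|_{p}$ stays bounded on compact subintervals by (2.22), and since $\theta\in C^{0,\hat{\beta}}((0,T];C(\overline{\Omega}))$ takes values in a fixed bounded set, $\|f'(\theta(t))-f'(\theta(s))\|_{\infty}$ is controlled by composing the modulus of continuity of $f'$ on that set with the time-H\"older modulus of $\theta$ in $C(\overline{\Omega})$. This composition of moduli is the crux: a naive use of mere continuity of $f'$ returns only continuity in time, so obtaining a genuine H\"older rate is precisely where the argument must be made quantitative. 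Once this term is controlled, assembling it with the bilinear estimates yields the asserted H\"older continuity of $d_{t}F(u,\theta)$ and $d_{t}G(u,\theta)$.
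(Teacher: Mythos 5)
Your overall route is the same as the paper's: the paper disposes of this lemma in one line by citing Theorem 2.3 (ii) together with Lemmas 5.1--5.4 (i), which is precisely the differentiate-then-apply-bilinear-estimates argument you write out. Your treatment of the five bilinear terms is sound: Lemmas 5.1--5.3 (i) furnish the continuous bilinear maps (their hypotheses $n<p$, $q\leq p$, $2q\leq p$ follow from (2.27)), and the factors $u$, $\theta$, $d_{t}u$, $d_{t}\theta$ are locally bounded and H\"older continuous in $(W^{1,p}(\Omega))^{n}$, respectively $W^{1,q}(\Omega)$, by Theorem 2.3 (ii) with $\alpha=\beta=1/2$ (admissible since (2.27) with $\alpha_{0}=\beta_{0}=0$ implies (2.17)) and the embeddings (2.7), (2.8).

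The problem is the step you flag and then leave open: the term $P\bigl(\bigl(f'(\theta(t))-f'(\theta(s))\bigr)d_{t}\theta(s)\bigr)$. Under the stated hypothesis $f\in C^{0,1}(\mathbb{R};\mathbb{R}^{n})\cap C^{1}(\mathbb{R};\mathbb{R}^{n})$, the derivative $f'$ is merely bounded and continuous, so on the compact range of $\theta$ it carries only a modulus of continuity $\omega$ with $\omega(0+)=0$; composing $\omega$ with the H\"older modulus of $t\mapsto\theta(t)$ in $C(\overline{\Omega})$ gives $\|f'(\theta(t))-f'(\theta(s))\|_{\infty}\leq\omega(C|t-s|^{\hat{\beta}})$, which tends to zero but is not $O(|t-s|^{\gamma})$ for any $\gamma>0$ unless $\omega$ is itself H\"older (take $\omega(r)\sim 1/\log(1/r)$ to see the failure). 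So ``making the argument quantitative'' is not a matter of bookkeeping: with the hypotheses as given there is no H\"older rate to extract from this term, and your proof of the H\"older continuity of $d_{t}F(u,\theta)$ is incomplete at exactly this point. To close it one needs an extra assumption such as $f'$ locally H\"older continuous (e.g. $f\in C^{1,\gamma}$), after which the term is immediately $O(|t-s|^{\gamma\hat{\beta}})$. It is fair to add that the paper's own one-line proof does not visibly supply this step either --- Lemma 5.4 (i) yields only $\|Pf(\theta)\|_{p}\leq CL_{f}\|\theta\|_{1,q}$ and $\|f'\|_{\infty}\leq L_{f}$, which controls $P\bigl(f'(\theta(t))(d_{t}\theta(t)-d_{t}\theta(s))\bigr)$ but not the difference of $f'$ --- so you have put your finger on a genuine weak point of the argument; but identifying the gap is not the same as repairing it, and as written your proposal does not prove the lemma.
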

\begin{proof}
It follows immediately from Theorem 2.3 (ii) with $\alpha=\alpha_{1},\alpha_{2}$, $\beta=\beta_{1},\beta_{2}$, Lemmas 5.1--5.4 (i).
\end{proof}
\begin{lemma}
Let $f \in C^{0,1}(\mathbb{R};\mathbb{R}^{n})\cap C^{1}(\mathbb{R};\mathbb{R}^{n})$, $f(0)=0$, $p$ and $q$ satisfy $(2.27)$.
Then
\begin{equation*}
d_{t}u \in C^{0,\hat{\alpha}}((0,T];X^{1}), \ d^{2}_{t}u \in C^{0,\tilde{\alpha}}((0,T];X^{\alpha}),
\end{equation*}
\begin{equation*}
d_{t}\theta \in C^{0,\hat{\beta}}((0,T];Y^{1}), \ d^{2}_{t}\theta \in C^{0,\tilde{\beta}}((0,T];Y^{\beta})
\end{equation*}
for any $0<\hat{\alpha}<\min\{1-\alpha_{1}, 1-\beta_{1}\}$, $0<\hat{\beta}<\min\{1-\alpha_{2}, 1-\beta_{2}\}$, $0\leq\alpha<\min\{1-\alpha_{1}, 1-\beta_{1}\}$, $0\leq\beta<\min\{1-\alpha_{2}, 1-\beta_{2}\}$, $0<\tilde{\alpha}<\min\{1-\alpha_{1}, 1-\beta_{1}\}-\alpha$, $0<\tilde{\beta}<\min\{1-\alpha_{2}, 1-\beta_{2}\}-\beta$.
\end{lemma}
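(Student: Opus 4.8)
The plan is to differentiate the evolution equations in (I) once more in time and treat $d_{t}u$ and $d_{t}\theta$ as mild solutions of the \emph{linear} inhomogeneous problems whose forcing terms are $d_{t}F(u,\theta)$ and $d_{t}G(u,\theta)$. By Lemma 5.7 these forcing terms already lie in $C^{0,\hat{\alpha}}((0,T];L^{p}_{\sigma}(\Omega))$ and $C^{0,\hat{\beta}}((0,T];L^{q}(\Omega))$, so the conclusion will follow by feeding them into the regularity lemma 4.2 (and its $B$, $G$, $\mathcal{G}$ analogue), exactly as $F(u,\theta)$ and $G(u,\theta)$ themselves were fed into Lemma 4.2 to establish Theorem 2.3 (ii).

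First I would fix $0<\tau<T$ and start from the shifted integral equation (3.27), valid for $\tau\leq t\leq T$. Since $f\in C^{1}(\mathbb{R};\mathbb{R}^{n})$ renders $F(u,\theta)$ and $G(u,\theta)$ differentiable in time on $(0,T]$ with Hölder continuous derivatives, differentiating (3.27) in $t$ (after the change of variable $s\mapsto t-s$ in the Duhamel integral) yields
\begin{equation*}
d_{t}u(t)=e^{-(t-\tau)A}(d_{t}u)(\tau)+\int^{t}_{\tau}e^{-(t-s)A}(d_{t}F(u,\theta))(s)ds,
\end{equation*}
\begin{equation*}
d_{t}\theta(t)=e^{-(t-\tau)B}(d_{t}\theta)(\tau)+\int^{t}_{\tau}e^{-(t-s)B}(d_{t}G(u,\theta))(s)ds
\end{equation*}
for $\tau\leq t\leq T$. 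The integral term is precisely an $\mathcal{F}$-type object with $d_{t}F(u,\theta)$ in place of $F$; since $d_{t}F(u,\theta)$ is bounded and Hölder continuous on $[\tau,T]$ (Lemma 5.7), it satisfies the hypotheses (4.2), (4.4) of Lemma 4.2 over $(\tau,T]$, and Lemma 4.2 (i) gives that this integral belongs to $C^{0,\hat{\alpha}}((\tau,T];X^{1})$ with time derivative in $C^{0,\tilde{\alpha}}((\tau,T];X^{\alpha})$ in the stated ranges of $\hat{\alpha}$, $\alpha$, $\tilde{\alpha}$.

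It remains to treat the semigroup terms $e^{-(t-\tau)A}(d_{t}u)(\tau)$ and $e^{-(t-\tau)B}(d_{t}\theta)(\tau)$. By Theorem 2.3 (ii) we have $(d_{t}u)(\tau)\in X^{\alpha}$ and $(d_{t}\theta)(\tau)\in Y^{\beta}$, so analyticity of $\{e^{-tA}\}_{t\geq 0}$ and $\{e^{-tB}\}_{t\geq 0}$ makes these terms smooth in $t$ and, on any $[\tau',T]$ with $\tau<\tau'$, Hölder continuous into $X^{1}$ and $Y^{1}$ with derivatives into $X^{\alpha}$ and $Y^{\beta}$; choosing $\tau$ below each compact subinterval of $(0,T]$ removes the apparent singularity at $t=\tau$. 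Combining the two contributions on every $[\tau',T]\subset(0,T]$ yields the assertions for $d_{t}u$, $d^{2}_{t}u$ and, by the verbatim argument with $B$, $G$, $\mathcal{G}$, $Y^{1}$, $Y^{\beta}$, for $d_{t}\theta$, $d^{2}_{t}\theta$.

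The main obstacle is the rigorous justification of the differentiated Duhamel representation displayed above, namely that $d_{t}u$ is itself differentiable and solves $d_{t}(d_{t}u)+A(d_{t}u)=d_{t}F(u,\theta)$, and, upstream of it, the Hölder regularity of $d_{t}F(u,\theta)$ and $d_{t}G(u,\theta)$ furnished by Lemma 5.7, whose proof rests on the chain rule applied to the bilinear nonlinearities together with the estimates of Theorem 2.3 (iii) on $d_{t}u$, $d_{t}\theta$ and the $(W^{k,p})^{n}\times W^{k,q}$-bounds of Lemmas 5.1--5.4 (i). Once these Hölder bounds are secured, the remaining steps are a direct transcription of the proof of Theorem 2.3 (ii) with $(F,G)$ replaced by $(d_{t}F,d_{t}G)$.
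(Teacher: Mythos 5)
Your proposal is correct and follows essentially the same route as the paper: differentiate the shifted Duhamel formula (3.27) to represent $d_{t}u$, $d_{t}\theta$ via $e^{-(t-\tau)A}d_{t}u(\tau)$, $e^{-(t-\tau)B}d_{t}\theta(\tau)$ plus integrals with forcings $d_{t}F(u,\theta)$, $d_{t}G(u,\theta)$, then invoke the H\"older regularity of these forcings from Lemma 5.7 and apply Lemma 4.2 (and its $B$, $G$, $\mathcal{G}$ analogue). The extra care you take with the semigroup terms and with justifying the differentiated representation is exactly what the paper's terser argument implicitly relies on.
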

\begin{proof}
Lemma 5.7 admits that we differentiate (I) with respect to $t$ and obtain the following abstract integral equations:
\begin{equation*}
\begin{cases}
d_{t}u(t)=e^{-(t-\tau)A}d_{t}u(\tau)+\displaystyle\int^{t}_{\tau}e^{-(t-s)A}d_{t}F(u,\theta)(s)ds, \\
d_{t}\theta(t)=e^{-(t-\tau)B}d_{t}\theta(\tau)+\displaystyle\int^{t}_{\tau}e^{-(t-s)B}d_{t}G(u,\theta)(s)ds
\end{cases}
\end{equation*}
for any $0<\tau\leq t\leq T$.
Therefore, the conclusion follows immediately from Lemmas 4.2 (\cite[Lemma 2.14]{Fujita}) and 5.7.
\end{proof}
It follows from the Sobolev embedding theorem that $W^{k+1,p}(\Omega)\hookrightarrow C^{k,\alpha}(\overline{\Omega})$, $W^{k+1,q}(\Omega)$ $\hookrightarrow C^{k,\beta}(\overline{\Omega})$ for any $k \in \mathbb{Z}$, $k\geq 0$, $0<\alpha<1-n/p$, $0<\beta<1-n/q$.
Therefore, Lemmas 5.6 and 5.8 imply Corollary 2.1.

\end{document}